\documentclass[final,onefignum,onetabnum]{siamart190516}

\usepackage{xcolor}
\usepackage{subcaption}
\usepackage{enumitem}
\usepackage{graphicx}
\usepackage{cite}
\usepackage{amsfonts}
\usepackage{amssymb}

\usepackage{tikz,tkz-euclide}
\usepackage[european]{circuitikz}

\usetikzlibrary{matrix,arrows,calc,fit,shapes.callouts,shapes.arrows}
\tikzset{font={\fontsize{10pt}{9}\selectfont}}

\tikzstyle{block} = [draw, rectangle, thick,minimum height=1em,minimum width=1em]
\tikzstyle{smallsum} = [draw,circle,inner sep=0mm,minimum size=3mm]
\tikzstyle{branch} = [draw,circle,inner sep=0.5mm,fill=black]
\tikzstyle{none} = [draw=none]
\tikzstyle{connector} = [->,thick]
\tikzstyle{line} = [thick]

\tikzset{%
  saturation block/.style={%
    draw, thick,
    path picture={
      \pgfpointdiff{\pgfpointanchor{path picture bounding box}{north east}}%
        {\pgfpointanchor{path picture bounding box}{south west}}
      \pgfgetlastxy\x\y
      \tikzset{x=\x*.4, y=\y*.4}
      \draw (-.9,0) -- (.9,0) (0,-.9) -- (0,.9);
      \draw (-.9,-.6) -- (-.6,-.6) -- (.6,.6) -- (.9,.6);
      \node[text width=.1cm] at (-.35,.55) {\scriptsize $P_\calU$};
    }
  }
}

\ifpdf
	\DeclareGraphicsExtensions{.eps,.pdf,.png,.jpg}
\else
	\DeclareGraphicsExtensions{.eps}
\fi


\newsiamremark{remarkx}{Remark}
\newsiamremark{examplex}{Example}
\newsiamremark{assumptionx}{Assumption}
\newenvironment{example}{\examplex}{\oprocend\endexamplex}
\newenvironment{remark}{\remarkx}{\oprocend\endremarkx}
\newenvironment{assumption}{\assumptionx}{\oprocend\endremarkx}
\newcommand\oprocendsymbol{\hbox{\small $\blacksquare$}}
\newcommand\oprocend{\relax\ifmmode\else\unskip\hfill\fi\oprocendsymbol}

\captionsetup[subfigure]{labelfont=normalfont}
\graphicspath{{./Figs/}{./Simulations}}
\numberwithin{theorem}{section}
\numberwithin{equation}{section}

\newcommand{\bbR}{\mathbb{R}}
\newcommand{\calX}{\mathcal{X}}
\newcommand{\bbB}{\mathbb{B}}
\newcommand{\calL}{\mathcal{L}}
\newcommand{\bbS}{\mathbb{S}}
\newcommand{\calU}{\mathcal{U}}
\newcommand{\bbI}{\mathbb{I}}
\newcommand{\calZ}{\mathcal{Z}}
\newcommand{\tF}{\hat{F}}
\newcommand{\calV}{\mathcal{V}}
\newcommand{\calA}{\mathcal{A}}
\newcommand{\calC}{\mathcal{C}}
\newcommand{\calK}{\mathcal{K}}
\newcommand{\calKL}{\mathcal{KL}}
\newcommand{\calB}{\mathcal{B}}
\renewcommand{\o}[1]{\overline{#1}}
\DeclareMathOperator{\dom}{dom}
\DeclareMathOperator{\co}{co}
\DeclareMathOperator{\cl}{cl}
\DeclareMathOperator{\cocl}{\overline{co}}
\DeclareMathOperator{\gph}{gph}
\DeclareMathOperator{\interior}{int}

\newcommand{\pds}{PDS }
\newcommand{\pdss}{PDS }

\newenvironment{smallbmatrix}
  {\left[\begin{smallmatrix}}
  {\end{smallmatrix}\right]}

\usepackage{pgfplots}
\pgfplotsset{compat=newest}
\usetikzlibrary{plotmarks}
\usetikzlibrary{arrows.meta}
\usepgfplotslibrary{patchplots}
\tikzset{font={\fontsize{8pt}{9}\selectfont}}
\usepackage{grffile}
\usepackage{amsmath}

\ifpdf
\hypersetup{
	pdftitle={Anti-Windup Implementation of Projected Dynamics},
	pdfauthor={A. Hauswirth, F. D\"orfler, and A. R. Teel}
}
\fi

\headers{Anti-Windup Implementation of Projected Dynamics}{A. Hauswirth, F.D\"orfler, and A. Teel}

\title{Anti-Windup Approximations of Oblique Projected Dynamics for Feedback-based Optimization\thanks{Submitted to the editors DATE.
\funding{This work was supported by ETH Zurich funds, SNF AP Energy grant \#160573 and mobility grant \#160573/2, NSF grant ECCS-1508757 and AFOSR grant FA9550-18-1-0246.}}}

\author{Adrian Hauswirth\thanks{Department of Information Technology and Electrical Engineering, ETH Z\"urich,
                Zurich, Switzerland (\email{hadrian@ethz.ch},\email{dorfler@ethz.ch}).}
\and Florian D\"orfler\footnotemark[2]
\and Andrew R. Teel\thanks{Department of Electrical and Computer Engineering, University of California, Santa Barbara, CA (\email{teel@ece.ucsb.edu}).}
}



\begin{document}

\maketitle

\begin{abstract}
	In this paper we study how high-gain anti-windup schemes can be used to implement projected dynamical systems in control loops that are subject to saturation on a (possibly unknown) set of admissible inputs. This insight is especially useful for the design of autonomous optimization schemes that realize a closed-loop behavior which approximates a particular optimization algorithm (e.g., projected gradient or Newton descent) while requiring only limited model information.
	In our analysis we show that a saturated integral controller, augmented with an anti-windup scheme, gives rise to a perturbed projected dynamical system. This insight allows us to show uniform convergence and robust practical stability as the anti-windup gain goes to infinity. Moreover, for a special case encountered in autonomous optimization we show robust convergence, i.e., convergence to an optimal steady-state for finite gains. Apart from being particularly suited for online optimization of large-scale systems, such as power grids, these results are potentially useful for other control and optimization applications as they shed a new light on both anti-windup control and projected gradient systems.
\end{abstract}

\begin{keywords}
	Differential Inclusions, Autonomous Optimization, Anti-Windup Control
\end{keywords}

\begin{AMS}
	93C30, 34A60, 34H05
\end{AMS}

\section{Introduction}
In recent years, the design of feedback controllers based on optimization algorithms has garnered significant interest as a new approach to real-time optimization of large-scale systems such as power grids~\cite{dallaneseOptimalPowerFlow2018,tangRealTimeOptimalPower2017,hauswirthOnlineOptimizationClosed2017,molzahnSurveyDistributedOptimization2017} and communication networks~\cite{lowInternetCongestionControl2002,kellyRatecontrolcommunication1998}.
The goal of \emph{autonomous} (or \emph{feedback-based) optimization} is to implement feedback systems that robustly solve nonlinear optimization problems in closed loop with a physical system, often without requiring explicit knowledge of the problem parameters, because the physical plant itself enforces certain constraints.

In this paper, we investigate a new approach to enforce constraints by exploiting physical saturation. More precisely, we study how anti-windup control, which is ubiquitous in feedback control to mitigate integrator windup, can be used to implement projected dynamical systems (PDS) which are at the basis of continuous-time algorithms for constrained optimization.
In particular, \pdss form a class of discontinuous dynamical systems that encompasses projected gradient flow~\cite{hauswirthProjectedGradientDescent2016}, projected Newton flow~\cite{hauswirthProjectedDynamicalSystems2018}, subgradient flow~\cite{cortesDiscontinuousdynamicalsystems2008} and projected saddle-flows~\cite{goebelStabilityRobustnessSaddlepoint2017,cherukuriRoleConvexitySaddlePoint2017}. More generally, \pdss arise in many contexts that include unilateral constraints, such as variational inequalities~\cite{nagurneyProjectedDynamicalSystems1996,facchineiFinitedimensionalvariationalinequalities2003}, evolutionary games~\cite{lahkarProjectionDynamicGeometry2008}, and complementarity systems~\cite{brogliatoEquivalenceComplementaritySystems2006,aubinDifferentialInclusionsSetValued1984}.

The main contribution of this paper is to establish a rigorous connection between \pdss and anti-windup controllers and to generalize~\cite{hauswirthImplementationProjectedDynamical2020}. From an abstract point of view, we consider a class of parametrized dynamical systems, termed \emph{anti-windup approximations} (AWA), and we show uniform convergence of trajectories to the solution of a \pds as the \emph{anti-windup gain} tends to infinity. Moreover, we establish semiglobal practical robustness of \pdss with respect to anti-windup approximations. For the special case of strongly monotone vector fields we further show robust asymptotic stability for finite gains.

Compared to~\cite{hauswirthImplementationProjectedDynamical2020} we make the following generalizations:
\begin{enumerate}[leftmargin=*, label=\emph{\roman*})]
	\item We do not require the feasible domain to be convex. Instead, we work with (non-convex) prox-regular sets and show, by means of a counter-example, that prox-regularity cannot, in general, be relaxed further.

	\item We consider \emph{oblique} \pds that provide an additional degree in the form of a (Riemannian) metric that allows us to capture wider variety of dynamics (such as projected Newton flows) and that is required for coordinate-free formulations.

	\item We require solutions to be neither unique nor complete. In particular, our results allow for solutions with finite escape time. Although these results may not be of practical relevance, they illustrate the necessity of our assumptions.

	\item We establish requirements for the convergence of anti-windup approximations of monotone dynamics on convex domains, thus providing a (partial) solution to a previously open problem formulated in~\cite{hauswirthImplementationProjectedDynamical2020}.
\end{enumerate}

Finally, in a largely self-contained section, we illustrate the possibilities of the proposed anti-windup approximations of \pdss and the applicability of our theoretical results in the context of autonomous optimization~\cite{tangRunningPrimalDualGradient2018,colombinorobustnessguaranteesfeedbackbased2019,hauswirthProjectedGradientDescent2016,lawrenceOptimalSteadyStateControl2018,colombinoOnlineOptimizationFeedback2019}.

The rest of the paper is organized as follows: In \cref{sec:prelim} we fix the notation and recall relevant notions from variational analysis and dynamical systems. In \cref{sec:tech} we define our problem and establish some technical lemmas. In \cref{sec:unif,sec:rob} we present our first two main results (\cref{thm:loc,thm:rob_stab}) on uniform convergence and semiglobal practical robust stability. In \cref{sec:rob_conv} we provide a stronger stability guarantee (\cref{thm:monot}) for the special case of monotone vector fields.
In~\cref{sec:aw_appl}, we illustrate the consequences of our results in the context of feedback-based optimization. For this, we consider four different optimization dynamics (three gradient-based and one saddle-point flow) and discuss their convergence behavior observed in simulations. In \cref{sec:conc} we summarize our results and discuss open problems.

\section{Preliminaries}\label{sec:prelim}

\subsection{Notation}
We consider $\bbR^n$ with the usual inner product $\left\langle \cdot, \cdot \right\rangle$ and 2-norm $\| \cdot \|$. We use $\bbR^n_{\geq 0}$ for the non-negative orthant. The closed (open) unit ball of appropriate dimension is denoted by $\bbB$ ($\interior \bbB$). For a sequence $\{K_n \}$, $K_n \rightarrow 0^+$ implies that that $K_n > 0$ for all $n$ and ${\lim}_{n \rightarrow \infty} K_n = 0$.
For a map $F: \bbR^n \rightarrow \bbR^m$, differentiable at $x \in \bbR^n$, $\nabla F(x) \in \bbR^{m \times n}$ denotes the \emph{Jacobian of $F$ at $x$}.
Given a set $\calC \subset \bbR^n$, its \emph{closure}, \emph{boundary}, and \emph{(closed) convex hull} are denoted by $\cl{\calC}$, $\partial \calC$, and $\co{\calC}$ ($\cocl \calC$), respectively. If $\calC$ is non-empty, we write $\| \calC \| := \sup_{v \in \calC}  \| v \|$.
The \emph{distance to $\calC$} is defined as $d_\calC(x) := \inf_{\tilde{x} \in \calC} \| x - \tilde{x} \|$, and the \emph{projection} $P_\calC : \bbR^n \rightrightarrows \calC$ is given by $P_\calC(x) := \{ \tilde{x} \in \calC \, | \, \| x - \tilde{x} \| = d_{\calC}(x) \}$.
The \emph{domain} of a set-valued map $H: \bbR^n \rightrightarrows \bbR^m$ is defined as $\dom H := \{ x \, | \, H(x) \neq \emptyset \}$. We use the standard definitions of \emph{outer semicontinuity (osc)}, \emph{local boundedness}, \emph{graphical convergence}, etc. from set-valued analysis. In particular, unless noted otherwise, we follow the definitions and notation of~\cite[Chap.~5]{goebelHybridDynamicalSystems2012}.
The identity matrix (of appropriate size) is denoted by $\bbI$.
Given a square symmetric matrix $G \in \bbR^{n \times n}$, $\lambda^{\max}_G$ and $\lambda^{\min}_G$ denote its maximum and minimum eigenvalue, respectively.
The set of symmetric, positive definite matrices of size $n$ is denoted by $\bbS_+^n$.
A \emph{metric} on $\calC \subset \bbR^n$ is a map $G: \calC \rightarrow \bbS^n_+$. A metric $G$ induces an \emph{inner product} $\left\langle u, v \right\rangle_{G(x)} := u^T G(x) v$ and an associated \emph{2-norm} $\| u \|_{G(x)} := ( \left\langle u, u \right\rangle_{G(x)})^{1/2}$ for all $x \in \calC$ and all $u, v \in \bbR^n$. A metric is (\emph{Lipschitz}) \emph{continuous} if it is (Lipschitz) continuous as a map $G: \calC \rightarrow \bbS^n_+$ with respect to the $\lambda^{\max}$-norm on $\bbS^n_+$.

\subsection{Variational Geometry}

We use the following, slightly simplified\footnote{To allow for a more concise presentation, we limit ourselves to closed, Clarke regular subsets of $\bbR^n$ which allow for an unambiguous definition of tangent and normal cones.}, notions of variational geometry. For a comprehensive treatment the reader is referred to~\cite{rockafellarVariationalAnalysis2009}.

Given a closed set $\calC \subset \bbR^n$ and $x \in \calC$, a vector $v$ is a \emph{tangent vector to $\calC$ at $x$} if there exist sequences $x_k \rightarrow x$ with $x_k \in \calC$ for all $k$ and $\delta_k \rightarrow 0^+$ such that $\frac{x_k - x}{\delta_k}\rightarrow v$. The set of all tangent vectors at $x$ is called the \emph{tangent cone at $x$} and denoted by $T_x \calC$. If the set-valued map $x \mapsto T_x \calC$ is inner semicontinuous then $\calC$ is \emph{Clarke regular} (or \emph{tangentially regular})~\cite[Cor.~6.29]{rockafellarVariationalAnalysis2009}. If $\calC$ is Clarke regular then $T_x\calC$ is closed convex and the \emph{(Euclidean) normal cone at $x$} is defined as the polar cone of $T_x\calC$, i.e., $N_x \calC := \{ \eta \, | \, \forall v \in T_x \calC: \, \left\langle \eta, v\right\rangle \leq 0 \}$~\cite[Cor.~6.30]{rockafellarVariationalAnalysis2009}. Further, the map $x \mapsto N_x \calC$ is osc~\cite[Prop.~6.6]{rockafellarVariationalAnalysis2009}. We follow the convention that $T_x \calC = N_x \calC = \emptyset$ for all $x \notin \calC$.

We will mostly work with the special class of \emph{prox-regular sets}. Given a Clarke regular set $\calC$ and $\alpha > 0$, a normal vector $\eta \in N_x \calC$ is \emph{$\alpha$-proximal} if $\left\langle \eta, y - x \right\rangle \leq \alpha \| \eta \| \| y - x \|^2$ for all $y \in \calC$. The set $\calC$ is \emph{$\alpha$-prox-regular at $x$} if all normal vectors at $x$ are $\alpha$-proximal. In other words, the normal cone coincides with the cone of $\alpha$-proximal normals.
A set is \emph{$\alpha$-prox-regular} if it is $\alpha$-prox-regular at all $x \in \calC$ and \emph{prox-regular} if it is $\alpha$-prox-regular for some $\alpha > 0$.
A key property of prox-regular sets is that the projection on $\calC$ is locally well-defined~\cite[Thm.~2.2 \& Prop.~2.3]{adlyPreservationProxRegularitySets2016}:

\begin{proposition}\label{prop:prox}
	If $\calC \subset \bbR^n$ is $\alpha$-prox-regular, then, for every $x \in \calC + \frac{1}{2\alpha} \interior \bbB$, the set $P_\calC(x)$ is a singleton and $d^2_\calC$ is differentiable at $x$ with $\nabla (d^2_\calC(x)) = 2(x - P_\calC(x))$.
	Further, $P_\calC(\o{x} + v) = \o{x}$ holds for every $\o{x} \in \calC$ and all $v \in N_{\o{x}} \calC \cap \frac{1}{2\alpha} \interior \bbB$.
\end{proposition}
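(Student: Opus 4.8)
The plan is to derive all three conclusions from a single two-point consequence of $\alpha$-prox-regularity and then specialize it. Throughout write $d := d_\calC(x)$ and note that $x \in \calC + \tfrac{1}{2\alpha}\interior\bbB$ is equivalent to $d < \tfrac{1}{2\alpha}$. First I would record that $P_\calC(x)$ is nonempty for any closed $\calC$: a minimizing sequence in $\calC$ is bounded, hence admits a subsequence converging to a point of $\calC$ that attains the infimum. The geometric input I would lean on is that whenever $p \in P_\calC(x)$, the vector $x - p$ is a proximal normal realized by a projection and therefore lies in $N_p\calC$; combined with prox-regularity this yields $\langle x - p, y - p\rangle \le \alpha\|x - p\|\,\|y - p\|^2$ for all $y \in \calC$. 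For single-valuedness, suppose $p_1, p_2 \in P_\calC(x)$, so $\|x - p_1\| = \|x - p_2\| = d$. Applying this inequality at $p_1$ with $y = p_2$ and at $p_2$ with $y = p_1$ and adding, the cross terms telescope to $\|p_1 - p_2\|^2$ on the left, giving $\|p_1 - p_2\|^2 \le 2\alpha d\,\|p_1 - p_2\|^2$. Since $2\alpha d < 1$, this forces $p_1 = p_2$.

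The same telescoping, applied now to distinct base points $x, x'$ with projections $p, p'$ and $r := \max\{d_\calC(x), d_\calC(x')\} < \tfrac{1}{2\alpha}$, produces $(1 - 2\alpha r)\|p - p'\|^2 \le \langle x' - x, p' - p\rangle \le \|x - x'\|\,\|p - p'\|$, hence the local Lipschitz bound $\|P_\calC(x) - P_\calC(x')\| \le (1 - 2\alpha r)^{-1}\|x - x'\|$. For differentiability of $d^2_\calC$ I would then run a sandwich argument: the feasible competitors $P_\calC(x)$ and $P_\calC(x+h)$ give the upper bound $d^2_\calC(x + h) \le \|x + h - P_\calC(x)\|^2$ and the lower bound $d^2_\calC(x) \le \|x - P_\calC(x + h)\|^2$, which combine to
$$2\langle x - P_\calC(x+h), h\rangle + \|h\|^2 \le d^2_\calC(x+h) - d^2_\calC(x) \le 2\langle x - P_\calC(x), h\rangle + \|h\|^2.$$
Continuity of $P_\calC$ (just established) rewrites the left endpoint as $2\langle x - P_\calC(x), h\rangle + o(\|h\|)$, so both bounds agree to first order and $\nabla(d^2_\calC(x)) = 2(x - P_\calC(x))$.

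For the final claim, set $x = \o{x} + v$ with $v \in N_{\o x}\calC$ and $\|v\| < \tfrac{1}{2\alpha}$; since $d_\calC(x) \le \|v\| < \tfrac{1}{2\alpha}$, the projection is a singleton by the first part. Expanding $\|x - y\|^2 = \|v\|^2 + 2\langle v, \o x - y\rangle + \|\o x - y\|^2$ for arbitrary $y \in \calC$ and using that $v$ is $\alpha$-proximal, i.e. $\langle v, \o x - y\rangle \ge -\alpha\|v\|\,\|\o x - y\|^2$, yields $\|x - y\|^2 \ge \|v\|^2 + (1 - 2\alpha\|v\|)\|\o x - y\|^2$. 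As $1 - 2\alpha\|v\| > 0$, this is strictly larger than $\|v\|^2 = \|x - \o x\|^2$ whenever $y \ne \o x$, so $\o x$ is the unique nearest point and $P_\calC(\o x + v) = \o x$.

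I expect the main obstacle to be the two structural inputs rather than the algebra: justifying that the projection-realized vector $x - p$ genuinely belongs to $N_p\calC$ (so that the $\alpha$-proximal inequality is applicable at all), and keeping the threshold $d < \tfrac{1}{2\alpha}$ active so that every factor $1 - 2\alpha d$ remains strictly positive. Once those are secured, the single-valuedness, the Lipschitz estimate, and the normal-displacement identity are all just repeated telescoping of the same inner-product relation, and differentiability follows from the sandwich together with continuity of $P_\calC$.
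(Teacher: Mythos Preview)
Your argument is correct. Note, however, that the paper does not supply its own proof of this proposition: it is quoted directly from the literature (the reference to \cite[Thm.~2.2 \& Prop.~2.3]{adlyPreservationProxRegularitySets2016} attached to the statement), so there is no in-paper proof to compare against. What you have written is essentially the standard derivation found in that source and in the variational-analysis literature: use the $\alpha$-proximal inequality at two projection points and telescope to get uniqueness and the local Lipschitz bound $\|P_\calC(x)-P_\calC(x')\|\le(1-2\alpha r)^{-1}\|x-x'\|$, feed that continuity into the two-sided competitor estimate to obtain $\nabla(d_\calC^2)=2(\mathrm{id}-P_\calC)$, and handle the normal-displacement identity by the same expansion. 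The one structural input you flag---that $x-P_\calC(x)$ is a proximal normal and hence lies in $N_{P_\calC(x)}\calC$---is exactly what the paper's definition of prox-regularity (``the normal cone coincides with the cone of $\alpha$-proximal normals'') provides, so there is no gap there either.
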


For example, every closed convex set is Clarke regular as well as $\alpha$-prox-regular for all $\alpha > 0$. Further, every set of the form $\calC = \{ x \, | \, h(x) \leq 0 \}$, where $h: \bbR^n \rightarrow \bbR^m$ is differentiable, is Clarke regular if constraint qualifications hold~\cite[Thm.~6.14]{rockafellarVariationalAnalysis2009}. If, in addition, $h$ has a globally Lipschitz derivative, then $\calC$ is prox-regular~\cite[Ex.~7.7]{hauswirthProjectedDynamicalSystems2018}.

\subsection{Dynamical Systems \& Stability}

Given a closed set $\calC \subset \bbR^n$ and a set-valued map $H: \bbR^n \rightrightarrows \bbR^n$, we say that $x:[0, T ] \rightarrow \calC$ for some $T>0$ is a \emph{(Carath\'eodory) solution} of the (constrained) differential inclusion
\begin{align}\label{eq:gen_inclusion}
	\dot x \in H(x) \, , \qquad x \in \calC \,
\end{align}
if $x$ is absolutely continuous, and $x(t) \in \calC$ and $\dot x(t) \in H(x(t))$ hold for almost all $t \in [0,T]$. A map $x : [ 0, \infty) \rightarrow \calC$ is a \emph{complete} solution, if its restriction to any compact subinterval $[0, T]$ is a solution of~\eqref{eq:gen_inclusion}.

\begin{definition} An inclusion~\eqref{eq:gen_inclusion} is \emph{well-posed} if
	$\calC$ is closed, $H$ is osc and locally bounded relative to $\calC$, and $H(x)$ is non-empty and convex for all $x \in \calC$.
\end{definition}

Standard results (e.g.,~\cite[Lem.~5.26]{goebelHybridDynamicalSystems2012}) guarantee that~\eqref{eq:gen_inclusion} admits a solution for every initial condition $x(0) \in \calC$ if it is well-posed and $H(x) \cap T_x \calC \neq \emptyset$ for all $x \in \calC$.

For convenience, we introduce the following notion of truncated solution:
\begin{definition}
	Consider~\eqref{eq:gen_inclusion} with $\calC = \bbR^n$. Given $T, \epsilon > 0$ and $x_0 \in \bbR^n$, a solution $x:[0, T'] \rightarrow \bbR^n$ of~\eqref{eq:gen_inclusion} with initial condition $x(0) = x_0$ is \emph{$(T, \epsilon)$-truncated} if $x(t) \in x_0 + \epsilon \bbB$ for all $t \in [0, T']$ and either $T' = T$ or $\| x(T') - x_0 \| = \epsilon$ holds.
\end{definition}

Recall that on a compact domain, solutions of an (unconstrained) inclusion can always be extended up to the boundary of the domain~\cite[\S7, Thm.~2]{filippovDifferentialEquationsDiscontinuous1988}:

\begin{theorem}\label{thm:filippov}
	Let~\eqref{eq:gen_inclusion} be well-posed with $\calC = \bbR^n$ and let $\calA \subset \bbR^n$ be compact. Then, every solution $x: [0, T] \rightarrow \bbR^n$ with $x(0) \in \calA$ can be extended up to the boundary of $\calA$, i.e., there is a solution for every  $T > 0$ or there exists $T$ such that $x(T) \in \partial \calA$.
\end{theorem}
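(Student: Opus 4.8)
The plan is to build a maximal solution that remains in $\calA$ via Zorn's lemma and then argue that, if it exists only on a finite time interval, it must terminate on $\partial \calA$. Two preliminary observations drive everything. First, since $\calC = \bbR^n$ we have $T_x \calC = \bbR^n$, so the tangency condition $H(x) \cap T_x \calC \neq \emptyset$ required by the cited existence result \cite[Lem.~5.26]{goebelHybridDynamicalSystems2012} reduces to $H(x) \neq \emptyset$, which holds by well-posedness; hence a local solution exists from every initial point. Second, because $H$ is locally bounded relative to $\bbR^n$ and $\calA$ is compact, there is $M > 0$ with $\| v \| \leq M$ for all $v \in H(x)$ and all $x \in \calA$. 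Consequently every solution that stays in $\calA$ is $M$-Lipschitz on its domain.

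Next I would order the collection of solutions $y : [0, T') \rightarrow \calA$ (all sharing the given initial condition and extending the given $x$) by extension, writing $y_1 \preceq y_2$ when $\dom y_1 \subseteq \dom y_2$ and $y_2$ restricts to $y_1$. Any chain has an upper bound: its domains are nested intervals with union $[0, s)$, and defining $y$ by the common values yields a map that agrees with a genuine chain element on every compact subinterval $[0, \tau] \subset [0, s)$, hence is absolutely continuous there and satisfies $\dot y \in H(y)$ a.e.; so $y$ is again a solution in $\calA$. Zorn's lemma then produces a maximal solution $\o{x} : [0, T^*) \rightarrow \calA$.

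Finally I would split on the value of $T^*$. If $T^* = \infty$, then $\o{x}$ is a complete solution staying in $\calA$ and we are in the first case. If $T^* < \infty$, the $M$-Lipschitz bound makes $\o{x}(t)$ Cauchy as $t \uparrow T^*$, so $x^* := \lim_{t \uparrow T^*} \o{x}(t)$ exists and, by closedness of $\calA$, lies in $\calA$. Using the closure property of solution sets of a well-posed inclusion, I would extend $\o{x}$ to $[0, T^*]$ by setting $\o{x}(T^*) = x^*$, obtaining a solution on the closed interval. Were $x^* \in \interior \calA$, local existence from $x^*$ together with the uniform velocity bound $M$ would let me continue $\o{x}$ inside $\calA$ for a short time beyond $T^*$ (the trajectory cannot leave a small ball around $x^*$ contained in $\interior \calA$), contradicting maximality. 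Hence $x^* \in \partial \calA$, and $T := T^*$ is the required time.

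I expect the main obstacle to be the closure step: showing that the limit trajectory — and in particular its extension to the endpoint $T^*$ — still solves $\dot x \in H(x)$ requires a graphical-convergence/Arzel\`a--Ascoli argument whose success hinges precisely on outer semicontinuity of $H$ together with convexity of the values $H(x)$. These are exactly the well-posedness hypotheses, and without them the limiting velocities could fail to belong to $H(x^*)$, so this is where the assumptions must be invoked carefully rather than routinely.
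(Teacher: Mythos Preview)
The paper does not supply its own proof of this theorem; it is quoted directly from Filippov's monograph \cite[\S7, Thm.~2]{filippovDifferentialEquationsDiscontinuous1988}. Your Zorn's-lemma argument is a correct and standard way of establishing such a continuation result, and it matches in spirit what one finds in Filippov's text (maximal continuation plus a boundary-hitting dichotomy).

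One comment on your anticipated obstacle: the ``closure step'' you flag is actually harmless and does \emph{not} require outer semicontinuity or convexity of $H$. Once you know $\o{x}$ is $M$-Lipschitz on $[0,T^*)$ and $x^* = \lim_{t\uparrow T^*}\o{x}(t)$ exists, the extension $\o{x}(T^*) := x^*$ is automatically $M$-Lipschitz on $[0,T^*]$, and the inclusion $\dot{\o{x}}(t)\in H(\o{x}(t))$, which already holds a.e.\ on $[0,T^*)$, then holds a.e.\ on $[0,T^*]$ since a single point is null. No graphical-convergence or Arzel\`a--Ascoli argument is needed here. The well-posedness hypotheses (osc, local boundedness, convex values) enter only through the local existence result you invoke at the outset and again at the continuation step past $x^*$; that is where they must be cited, and you already do so.
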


Therefore, by considering an augmented inclusion with $\hat{H}(x) := (H(x), 1)$, initial condition $\hat{x}(0) := (x(0), 0)$, $\hat{\calC} = \bbR^n \times \bbR$, and $\hat{\calA} = \calA \times [0, T]$, \cref{thm:filippov} guarantees the existence of truncated solutions for every $T$ and every $\epsilon$:
\begin{corollary}
	Let \eqref{eq:gen_inclusion} be well-posed with $\calC = \bbR^n$. Then, for every $T, \epsilon > 0$ and every $x(0) \in \bbR^n$ there exists a $(T, \epsilon)$-truncated solution to~\eqref{eq:gen_inclusion}.
\end{corollary}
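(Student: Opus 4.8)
The plan is to follow the reduction indicated immediately before the statement: apply \cref{thm:filippov} to the augmented inclusion $\dot{\hat x} \in \hat H(\hat x)$ on $\hat\calC = \bbR^{n+1}$, where $\hat x = (x,\tau)$ and $\hat H(x,\tau) = H(x) \times \{1\}$, and then extract a truncated solution from the $x$-component. The only hypothesis to verify before invoking \cref{thm:filippov} is well-posedness of $\hat H$, and this is routine: since $H$ is osc and locally bounded relative to $\bbR^n$ with non-empty convex values, the product map $\hat H$ inherits each property, the appended coordinate being the constant singleton $\{1\}$, which is osc, locally bounded, non-empty and convex. Hence $\hat H$ is osc, locally bounded, and has non-empty convex values on $\bbR^{n+1}$.

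Next I would fix $x_0 := x(0)$ and take the compact set $\calA := x_0 + \epsilon\bbB$, so that $\hat\calA := \calA \times [0,T]$ is compact. Since $\hat\calC = \bbR^{n+1}$, the tangency condition $\hat H(\hat x) \cap T_{\hat x}\hat\calC = \hat H(\hat x) \neq \emptyset$ holds trivially, so the standard existence result recalled above yields a local solution with $\hat x(0) = (x_0,0)$. Applying \cref{thm:filippov} with $\hat\calA$, the solution either remains in $\hat\calA$ for all time or reaches $\partial\hat\calA$ at a finite time. The decisive observation is that the clock coordinate satisfies $\dot\tau \equiv 1$, hence $\tau(t) = t$; this rules out the complete case, because $\tau(t) \to \infty$ cannot stay in $[0,T]$, and therefore the solution remains in $\hat\calA$ on an interval $[0,T']$ with $T' \le T$ and $\hat x(T') \in \partial\hat\calA$.

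It remains to locate the exit on $\partial\hat\calA = (\partial\calA \times [0,T]) \cup (\calA \times \{0,T\})$ and to read off the truncation conditions, where I would use $\tau(t) = t$ twice. First, for small $t > 0$ continuity gives $\|x(t) - x_0\| < \epsilon$ together with $0 < t < T$, so $\hat x(t) \in \interior\hat\calA$; thus $T' > 0$ and the face $\{\tau = 0\}$ is excluded as the exit face. Consequently the exit occurs either through $\calA \times \{T\}$, i.e.\ $\tau(T') = T' = T$, or through $\partial\calA \times [0,T]$, i.e.\ $\|x(T') - x_0\| = \epsilon$. Because $\hat x(t) \in \hat\calA$ forces $x(t) \in x_0 + \epsilon\bbB$ throughout $[0,T']$, the restriction $x|_{[0,T']}$ is in either case the desired $(T,\epsilon)$-truncated solution of \eqref{eq:gen_inclusion} (at the corner $T' = T$ and $\|x(T')-x_0\| = \epsilon$ both disjuncts hold, which is consistent with the definition).

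I expect the well-posedness check and the product-boundary decomposition to be entirely routine, while the one point requiring genuine care is the boundary analysis: because the initial point $(x_0,0)$ already lies on $\partial\hat\calA$, one cannot merely invoke ``the solution reaches the boundary'' but must first argue, via $\tau(t) = t > 0$ for $t > 0$, that the effective exit is through $\{\tau = T\}$ or $\{\|x - x_0\| = \epsilon\}$ and not back through $\{\tau = 0\}$. This monotone-clock device is precisely what the time augmentation buys.
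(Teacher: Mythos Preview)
Your proposal is correct and follows exactly the augmentation strategy the paper sketches in the sentence preceding the corollary (augment with a clock $\dot\tau = 1$, set $\hat\calA = (x_0 + \epsilon\bbB) \times [0,T]$, and apply \cref{thm:filippov}). You supply more detail than the paper does, in particular the boundary analysis ruling out exit through $\{\tau = 0\}$, which is a point the paper leaves implicit.
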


Hence, truncated solutions are convenient if finite escape times cannot be precluded, since their graph is always a compact subset of $[0, T] \times (x(0) + \epsilon \bbB)$.

We also require the notion of $\sigma$-perturbation of an inclusion~\cite[Def.~6.27]{goebelHybridDynamicalSystems2012}:

\begin{definition}
	Given $\sigma > 0$, the \emph{$\sigma$-perturbation of~\eqref{eq:gen_inclusion}} is given by
	\begin{align*}
		\dot x \in H_\sigma (x) \qquad x \in \calC_\sigma
	\end{align*}
	where $\calC_\sigma := \calC + \sigma \bbB$ and $H_\sigma(x) := \cocl H(( x + \sigma \bbB) \cap \calC) + \sigma \bbB$ for all $x \in \calC_\sigma$.
\end{definition}

Note in particular that, for $\sigma' \geq \sigma$,we have $\calC_{\sigma} \subset \calC_{\sigma'}$, $H_{\sigma}(x) \subset H_{\sigma'}(x)$ for all $x \in \calC_\sigma$, and every solution of the $\sigma$-perturbation is a solution of the $\sigma'$-perturbation.

Next, recall that $\omega: \bbR_{\geq 0} \rightarrow \bbR_{\geq 0}$ is a \emph{$\calK_\infty$-function} (denoted by $\omega \in \calK_\infty$) if $\omega$ is continuous, strictly increasing, unbounded, and it holds that $\omega(0) = 0$.
We require the following lemma about $\calK_\infty$-functions:

\begin{lemma}\cite[Cor.~10]{sontagCommentsIntegralVariants1998}\label{lem:kinf}
	For every $\omega \in \calK_\infty$, there exist $\sigma_1, \sigma_2 \in \calK_\infty$ such that $\omega(rs) \leq \sigma_1(r)\sigma_2(s)$ for all $r, s \geq 0$.
\end{lemma}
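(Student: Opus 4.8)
The plan is to pass to logarithmic coordinates, which converts the multiplicative inequality $\omega(rs)\le\sigma_1(r)\sigma_2(s)$ into an additive statement and reduces the whole lemma to a convexity fact. For $r,s>0$ write $r=e^{p}$, $s=e^{q}$ and set $h(u):=\ln\omega(e^{u})$; since $\omega\in\calK_\infty$, the map $h:\bbR\to\bbR$ is continuous, strictly increasing, with $h(u)\to-\infty$ as $u\to-\infty$ and $h(u)\to+\infty$ as $u\to+\infty$. As a warm-up I would record the elementary bound $\omega(rs)\le\omega(r^2)+\omega(s^2)$ (split into $r\le s$ and $r>s$ and use monotonicity): this already singles out the ``squares'' as the right objects, but the resulting \emph{sum} is lossy and cannot be rewritten as a product of $\calK_\infty$ functions, so a genuinely multiplicative argument is needed.

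The key reduction is that it suffices to produce a single $\psi\in\calK_\infty$ with $\psi\ge\omega$ that is \emph{log--log convex}, i.e.\ such that $u\mapsto\ln\psi(e^{u})$ is convex. Given such a $\psi$, I take $\sigma_1=\sigma_2=\sigma$ with $\sigma(t):=\sqrt{\psi(t^{2})}$, which lies in $\calK_\infty$ (being a composition and square root of $\calK_\infty$ maps). Midpoint convexity of $u\mapsto\ln\psi(e^{u})$, evaluated at $u=2\ln r$ and $v=2\ln s$, yields $\psi(rs)^{2}\le\psi(r^{2})\psi(s^{2})$, whence
\begin{align*}
 \sigma(r)\sigma(s)=\sqrt{\psi(r^{2})\psi(s^{2})}\ge\psi(rs)\ge\omega(rs),
\end{align*}
with the boundary cases $r=0$ or $s=0$ holding trivially because $\sigma(0)=0$. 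Thus everything reduces to constructing $\psi$, equivalently a \emph{convex} majorant $\hat h\ge h$ with $\hat h(u)\to-\infty$ as $u\to-\infty$ and $\hat h(u)\to+\infty$ as $u\to+\infty$; then $\psi(t):=\exp\!\big(\hat h(\ln t)\big)$ (and $\psi(0):=0$) is exactly the desired function.

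The main obstacle is precisely the existence of this convex majorant with the correct asymptotics, and specifically its \emph{left} tail: to force $\sigma(0)=0$ we need $\hat h(u)\to-\infty$, yet a convex function descending to $-\infty$ must do so through positive, nondecreasing slopes, and if $h$ decreases very slowly as $u\to-\infty$ then no affine majorant (and more generally no majorant descending too steeply) can stay above $h$. I would therefore build $\hat h$ by prescribing its slope $\hat h'=m>0$ as a nondecreasing function chosen so that (i) $m$ is small enough near $-\infty$ that $\hat h$ descends strictly more slowly than $h$ while still $\int_{-\infty}m=\infty$ (which guarantees $\hat h\to-\infty$), and (ii) $m$ grows fast enough near $+\infty$ to dominate the increments of $h$ (which guarantees $\hat h\ge h$ and $\hat h\to+\infty$). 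Verifying that these two requirements can be met simultaneously for an \emph{arbitrary} $\calK_\infty$ function — ruling out in particular that every convex majorant flattens to a finite left limit — is the only nontrivial step, and it is exactly the content of the cited result~\cite[Cor.~10]{sontagCommentsIntegralVariants1998}; once $\psi$ is in hand, the factorization follows immediately from the displayed computation.
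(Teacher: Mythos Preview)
The paper does not supply its own proof of this lemma; it simply quotes it from Sontag. Hence there is nothing in the paper to compare your argument against beyond the bare citation.

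Your reduction is correct: if $\psi\in\calK_\infty$ with $\psi\ge\omega$ is log--log convex, then $\sigma(t):=\sqrt{\psi(t^{2})}$ lies in $\calK_\infty$ and midpoint convexity gives $\omega(rs)\le\psi(rs)\le\sigma(r)\sigma(s)$ exactly as you display. The problem is your final paragraph. You correctly isolate the construction of a convex majorant $\hat h\ge h$ with $\hat h(\pm\infty)=\pm\infty$ as the crux, sketch the idea of prescribing a nondecreasing slope $m$ with $\int_{-\infty}m=\infty$, and then write that verifying this ``is exactly the content of the cited result~\cite[Cor.~10]{sontagCommentsIntegralVariants1998}.'' But Corollary~10 in Sontag \emph{is} the factorization statement $\omega(rs)\le\sigma_1(r)\sigma_2(s)$---it is the very lemma you are asked to prove, not an auxiliary result about convex majorants. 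So at the decisive step your argument is circular: you have reduced the lemma to a (plausible, and in fact true) majorization claim and then deferred that claim back to the lemma itself. To complete the proof along your lines you must actually exhibit $\hat h$ for an arbitrary increasing $h$ with $h(\pm\infty)=\pm\infty$; the slope-prescription idea is the right shape (for instance $\hat h(u)=-\ln(-u)+C$ handles a left tail like $-\sqrt{-u}$), but the general construction is a genuine piece of work that you have only hinted at, not carried out.
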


A function $\beta:\bbR_{\geq 0} \times \bbR_{\geq 0} \rightarrow \bbR_{\geq 0}$ is a \emph{$\calKL$-function} (denoted by $\beta \in \calKL$) if it is non-decreasing in its first argument, non-increasing in its second argument, $\lim_{r \rightarrow 0^+} \beta(r,s) = 0$ for each $s \in \bbR_{\geq 0}$, and $\lim_{s \rightarrow \infty} \beta(r,s) = 0 $ for each $t \in \bbR_{\geq 0}$.

A closed set $\calA \subset \bbR^n$ is \emph{uniformly globally (pre-)asymptotically stable} for~\eqref{eq:gen_inclusion} if there exists $\beta \in \calKL$ such that for every solution $x: [0, T] \rightarrow \calC$ of \eqref{eq:gen_inclusion} it holds that
\begin{align*}
	d_{\calA}(x(t)) \leq \beta( d_{\calA}(x(0)), t) \qquad \forall t \in [0, T] \, .
\end{align*}

\begin{remark}\label{rem:pre_stab}
	The term ``pre-asymptotic'' refers to the fact that solutions of~\eqref{eq:gen_inclusion} need not be complete for the above definition of stability to apply~\cite[Def 3.6 \& Thm 3.40]{goebelHybridDynamicalSystems2012}. However, if~\eqref{eq:gen_inclusion} is well-posed and $\calA$ is compact it follows that, for any initial condition $x(0) \in \calC$, the (compact) set $\{x \, | \, \beta(d_\calA(x), 0) \leq \beta(d_\calA(x(0)), 0) \}$ is invariant, thus implicitly guaranteeing the existence of a complete solution.
\end{remark}

\subsection{Oblique Projected Dynamical Systems}
PDS are continuous-time dynamical systems that are constrained to a set by projection of the vector field at the boundary of the domain.
Compared to traditional definitions~\cite{brogliatoEquivalenceComplementaritySystems2006,cornetExistenceslowsolutions1983,aubinDifferentialInclusionsSetValued1984,nagurneyProjectedDynamicalSystems1996}, we incorporate the possibility of \emph{oblique} projection directions by means of a variable metric~\cite{hauswirthProjectedDynamicalSystems2018}.
Namely, we consider \pdss as defined by the differential equation of the form
\begin{align}\label{eq:pds_ivp}
	\dot x = \Pi^G_\calC [ f(x)] (x) \, \qquad x \in \calC \, ,
\end{align}
where $\calC \subset \bbR^n$ is a Clarke regular set, $G:\calC \rightarrow \bbS_+^n$ is a metric on $\calC$, and $f: \bbR^n \rightarrow \bbR^n$ is a vector field. Given $x \in \calC$ and $w \in \bbR^n$, the operator $\Pi^G_\calC$ projects $w$ onto the tangent cone of $\calC$ at $x$ with respect to the metric $G$, i.e.,
\begin{align*}
	\Pi^G_\calC [ w ] (x) := \underset{v \in T_x \calC}{\arg \min} \| v -w \|_{G(x)} \, .
\end{align*}
Note that if $x \in \calC$, then $\Pi^G_\calC[w](x)$ is single-valued since $\calC$ is assumed to be Clarke regular which implies that $T_x \calC$ is closed convex. If $x \notin \calC$, we have $\Pi^G_\calC [ w ] (x) = \emptyset$ and therefore $\dom \Pi^G_\calC [ w ] = \calC$ for all $w \in \bbR^n$. If $f$ is a vector field, we abuse notation and write $\Pi^G_\calC [ f ] (x) := \Pi^G_\calC [ f(x) ] (x)$ for brevity.

Given a metric $G$, we define the \emph{normal cone of $\calC$ at $x$ with respect to $G$} as $N_x^G \calC := \{ \eta \, | \, \forall v \in T_x \calC: \, \left\langle \eta, v \right\rangle_{G(x)} \leq 0\}$. Note in particular that we have
\begin{align}\label{eq:norm_cone_equiv}
	\eta \in N_x \calC \quad \Longleftrightarrow \quad G^{-1}(x) \eta \in N^G_x \calC \, .
\end{align}

As a consequence of Moreau's Theorem~\cite[Thm.~3.2.5]{hiriart-urrutyFundamentalsConvexAnalysis2012} the operator $\Pi_\calC^G$ has the following crucial properties (see also~\cite{brogliatoEquivalenceComplementaritySystems2006,cornetExistenceslowsolutions1983,aubinDifferentialInclusionsSetValued1984}):

\begin{lemma}\cite[Lem.~4.5]{hauswirthProjectedDynamicalSystems2018}\label{lem:pds_decomp}
	If $\calC$ is Clarke regular then, for every $x \in \calC$, there exists a unique $\eta \in N^G_x \calC$ such that $\Pi^G_\calC[f](x) = f(x) - \eta$. Furthermore, $\Pi^G_\calC[f](x) = f(x) - \eta$ holds if and only if $\eta \in N^G_x \calC$ and $\left\langle f(x) - \eta, \eta \right\rangle_{G(x)} = 0$. Using Cauchy-Schwarz, it also holds that $\| \eta \|_{G(x)} \leq \| f(x) \|_{G(x)}$.
\end{lemma}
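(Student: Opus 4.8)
The plan is to recognize the operator $\Pi^G_\calC[f](x)$ as the metric projection of $f(x)$ onto the closed convex cone $T_x\calC$ inside the Hilbert space $(\bbR^n, \langle\cdot,\cdot\rangle_{G(x)})$, and then to invoke Moreau's decomposition theorem. First I would fix $x \in \calC$; since $\calC$ is Clarke regular, $T_x\calC$ is a closed convex cone, so the minimizer defining $\Pi^G_\calC[f](x)$ exists and is unique by the standard projection theorem for closed convex sets in a Hilbert space. The crucial observation is purely definitional: $N^G_x\calC$ is exactly the polar cone of $T_x\calC$ with respect to the inner product $\langle\cdot,\cdot\rangle_{G(x)}$.

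Next I would apply Moreau's theorem to the cone $K = T_x\calC$ with polar $K^\circ = N^G_x\calC$ and the vector $w = f(x)$. This yields the unique orthogonal decomposition of $f(x)$ into its projection onto $T_x\calC$ and its projection onto $N^G_x\calC$, with the two components $G(x)$-orthogonal. Setting $\eta := f(x) - \Pi^G_\calC[f](x)$ then gives existence of the claimed decomposition with $\eta \in N^G_x\calC$ and $\langle f(x) - \eta, \eta\rangle_{G(x)} = 0$; uniqueness of $\eta$ follows from uniqueness of the projection.

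For the ``if and only if'' characterization, I would use the variational (obtuse-angle) inequality for the projection onto a closed convex set: $v = \Pi^G_\calC[f](x)$ iff $v \in T_x\calC$ and $\langle f(x) - v, u - v\rangle_{G(x)} \leq 0$ for all $u \in T_x\calC$. Writing $\eta := f(x) - v$ and exploiting that $T_x\calC$ is a cone (so both $0$ and $2v$ lie in $T_x\calC$), the inequality separates into $\langle \eta, u\rangle_{G(x)} \leq 0$ for all $u \in T_x\calC$, i.e.\ $\eta \in N^G_x\calC$, together with the complementarity relation $\langle \eta, v\rangle_{G(x)} = 0$. This is precisely the stated equivalence.

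Finally, the norm bound is a short computation: orthogonality $\langle f(x) - \eta, \eta\rangle_{G(x)} = 0$ gives $\|\eta\|^2_{G(x)} = \langle f(x), \eta\rangle_{G(x)}$, and Cauchy-Schwarz in the $G(x)$-inner product yields $\|\eta\|^2_{G(x)} \leq \|f(x)\|_{G(x)}\|\eta\|_{G(x)}$, hence $\|\eta\|_{G(x)} \leq \|f(x)\|_{G(x)}$. I do not anticipate a genuine obstacle here, since everything reduces to the classical theory once the problem is transported into the weighted inner-product space; the only point requiring care is the bookkeeping that $N^G_x\calC$ is the $G(x)$-polar of $T_x\calC$ and that the cone structure collapses the general variational inequality into the normality-plus-complementarity conditions.
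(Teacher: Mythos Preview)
Your approach is correct and coincides with the paper's: the lemma is not proved in the paper but is attributed to Moreau's decomposition theorem (and an external reference), which is precisely the tool you invoke and unpack. One small observation worth recording is that your variational-inequality derivation actually yields the triple of conditions $f(x)-\eta\in T_x\calC$, $\eta\in N^G_x\calC$, and $\langle f(x)-\eta,\eta\rangle_{G(x)}=0$, whereas the printed statement suppresses the first; this is a harmless imprecision in the lemma (and the paper's own use of it in the proof of \cref{prop:pds_alt_form} supplies the tangency condition separately), not a gap in your argument.
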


Existence and uniqueness results for~\eqref{eq:pds_ivp} without a variable metric can be found in~\cite{aubinDifferentialInclusionsSetValued1984,nagurneyProjectedDynamicalSystems1996,cornetExistenceslowsolutions1983} and others.
For the case with a variable metric with bounded condition number, the following statement is a condensation of results in~\cite{hauswirthProjectedDynamicalSystems2018}:

\begin{theorem}\label{thm:pds_exist}
	Consider~\eqref{eq:pds_ivp} and let $\calC$ be Clarke regular, and $f$ and $G$ be continuous. Then,~\eqref{eq:pds_ivp} admits a solution for every initial condition $x(0) \in \calC$.

	If, in addition, there exists $\kappa > 0$ such that
	$\sup_{x \in \calC} \lambda^{\max}_{G(x)}/ \lambda^{\min}_{G(x)} \leq \kappa$ and $f$ is globally Lipschitz,
	then~\eqref{ex:pds_exist} admits a complete solution for every $x(0) \in \calC$.

	If $\calC$ is prox-regular, and if $f$ and $G$ are (locally) Lipschitz, then~\eqref{eq:pds_ivp} admits a unique solution for every initial condition $x(0) \in \calC$.\footnote{
	A solution $x:[0, T] \rightarrow \calC$ of~\eqref{eq:pds_ivp} is unique if for every other solution $x':[0, T'] \rightarrow \calC$ with the same initial condition it holds that $x(t) = x'(t)$ for all $t \in [0, \min\{T, T'\}]$.}
\end{theorem}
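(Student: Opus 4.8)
The plan is to treat all three claims by recasting~\eqref{eq:pds_ivp} as a constrained differential inclusion of the form~\eqref{eq:gen_inclusion} and then feeding it to the existence machinery assembled in the preliminaries. For the first claim I would introduce the truncated right-hand side $F(x) := \{ f(x) - \eta \mid \eta \in N^G_x \calC,\ \| \eta \|_{G(x)} \le \| f(x) \|_{G(x)} \}$ and study $\dot x \in F(x)$, $x \in \calC$. The purpose of intersecting the (unbounded) normal cone with the $G(x)$-ball of radius $\| f(x) \|_{G(x)}$ is to restore local boundedness while, by \cref{lem:pds_decomp}, still retaining the projected field $\Pi^G_\calC[f](x) = f(x) - \eta^\star$ as a distinguished element of $F(x)$.

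First I would verify that $F$ is well-posed: convexity of $F(x)$ follows from convexity of $N^G_x \calC$ (Clarke regularity) intersected with a $G(x)$-ball; local boundedness is immediate from the norm bound together with continuity of $f$ and $G$; and outer semicontinuity follows by writing $N^G_x \calC = G(x)^{-1} N_x \calC$ via~\eqref{eq:norm_cone_equiv}, using outer semicontinuity of $x \mapsto N_x \calC$, continuity of $G^{-1}$, and intersection with the continuously varying ball. Since $\Pi^G_\calC[f](x) \in F(x) \cap T_x \calC$ by \cref{lem:pds_decomp}, the tangency condition $F(x) \cap T_x \calC \neq \emptyset$ holds, so \cite[Lem.~5.26]{goebelHybridDynamicalSystems2012} yields a solution of the inclusion from every $x(0) \in \calC$. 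The subtle step is the \emph{identification}: showing that such an inclusion solution $x(\cdot)$ actually satisfies $\dot x(t) = \Pi^G_\calC[f](x(t))$ a.e., rather than merely $\dot x(t) \in F(x(t))$. The norm bound makes $\dot x$ locally bounded, hence $x$ locally Lipschitz, so $\dot x(t) \in T_{x(t)}\calC$ a.e. The key is orthogonality: fixing a differentiability point $t_0$ and any $\zeta \in N_{x(t_0)}\calC$, Clarke regularity gives the regular-normal estimate $\langle \zeta, x(t) - x(t_0) \rangle \le o(\|x(t) - x(t_0)\|) = o(|t-t_0|)$; dividing by $t - t_0$ and letting $t \to t_0^\pm$ forces $\langle \dot x(t_0), \zeta \rangle = 0$. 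Writing $\dot x(t) = f(x(t)) - \eta(t)$ with $\eta(t) \in N^G_{x(t)}\calC$ and taking $\zeta = G(x(t))\eta(t) \in N_{x(t)}\calC$ gives $\langle \dot x(t), \eta(t) \rangle_{G(x(t))} = 0$; the ``if and only if'' characterization in \cref{lem:pds_decomp} then yields $\dot x(t) = \Pi^G_\calC[f](x(t))$, proving the first claim.

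For completeness I would exploit \cref{lem:pds_decomp} and the condition-number bound to get $\| \dot x \|_{G(x)} \le 2 \| f(x) \|_{G(x)}$, hence $\| \dot x \| \le 2\sqrt{\kappa}\, \| f(x) \|$. Global Lipschitzness of $f$ upgrades this to linear growth $\| \dot x \| \le c(1 + \|x\|)$, so a Gr\"onwall estimate precludes finite escape, and \cref{thm:filippov} extends the solution to $[0, \infty)$. For uniqueness I would compare two solutions $x, x'$ with $x(0) = x'(0)$ on a compact interval (where both remain in a common compact set) through $V(t) = \tfrac12 \| x(t) - x'(t) \|^2$, bounding $\dot V = \langle x - x', f(x) - f(x') \rangle - \langle x - x', \eta - \eta' \rangle$. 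The first term is controlled by local Lipschitzness of $f$, and the normal-cone term is where prox-regularity enters, via the hypomonotonicity of $x \mapsto N_x \calC$ implied by the $\alpha$-proximal inequality underlying \cref{prop:prox}.

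The hard part, and the main obstacle, is this last estimate in the \emph{oblique} setting: since $\eta, \eta'$ are $G$-normals, converting $\langle x - x', \eta - \eta' \rangle$ into the Euclidean proximal inequality (applied to $\zeta = G(x)\eta$, $\zeta' = G(x')\eta'$) produces cross-terms of the form $\langle (G(x)^{-1} - G(x')^{-1})\zeta', x - x' \rangle$ that are a priori only $O(\|x - x'\|)$, too large for Gr\"onwall. Taming these requires the local Lipschitz continuity of $G$ together with eigenvalue bounds, and most plausibly a $G$-weighted Lyapunov function $V(t) = \tfrac12 \langle x - x', M(x(t)) (x - x') \rangle$ chosen so that the metric variation enters only at second order. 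Granting this, one obtains $\dot V \le c V$ on the interval, and $V(0) = 0$ forces $V \equiv 0$, i.e.\ $x \equiv x'$, completing the third claim.
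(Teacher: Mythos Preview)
The paper does not prove \cref{thm:pds_exist}; it is stated as a ``condensation of results in~\cite{hauswirthProjectedDynamicalSystems2018}'' and simply cited. So there is no in-paper proof to compare against, only the surrounding machinery.

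Your existence argument is correct and is, in fact, the same mechanism the paper invokes elsewhere: the paper's \cref{prop:pds_alt_form} performs exactly your identification step (using $\dot x(t) \in T_{x(t)}\calC \cap (-T_{x(t)}\calC)$ and the ``if and only if'' clause of \cref{lem:pds_decomp}), and its well-posedness lemma for~\eqref{eq:ivp_alt} is the constant-$\gamma$ variant of your truncated $F$. Your variable-radius truncation $\|\eta\|_{G(x)} \le \|f(x)\|_{G(x)}$ is a mild generalization that avoids assuming global boundedness of $f$; the osc check for this set-valued map is slightly more delicate than in the constant-$\gamma$ case but goes through via continuity of $x \mapsto \|f(x)\|_{G(x)}$. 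Your completeness argument via $\|\dot x\| \le 2\sqrt{\kappa}\,\|f(x)\|$ and Gr\"onwall is also correct; note that only the pointwise condition number enters, so no uniform lower bound on $\lambda^{\min}_{G(x)}$ is needed.

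The genuine gap is exactly where you locate it: the oblique uniqueness estimate. Your diagnosis is right that the cross-terms from $G(x)^{-1} - G(x')^{-1}$ are a priori only $O(\|x - x'\|)$, and your proposed remedy (a $G$-weighted Lyapunov function so that metric variation enters at second order) is the correct intuition, but ``granting this'' hides the substantive work. In~\cite{hauswirthProjectedDynamicalSystems2018} the argument proceeds by controlling $\|\eta(t)\|$ uniformly via \cref{lem:pds_decomp}, then combining the $\alpha$-proximal inequality for the Euclidean normals $G(x)\eta$ with the local Lipschitz bound $\|G(x) - G(x')\| \le L_G \|x - x'\|$, so that the offending term becomes $C\|\eta'\|\,\|x-x'\|^2$ rather than $C\|\eta'\|\,\|x-x'\|$; this is what makes Gr\"onwall close. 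Your sketch does not yet exhibit this cancellation, so as written the third claim remains unproven.
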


It is known that the solutions to~\eqref{eq:pds_ivp} are equivalent to the solutions of $\dot x \in f(x) - N^G_x \calC$ (for $G = \bbI$ see~\cite{aubinViabilityTheory1991,cornetExistenceslowsolutions1983}; for general $G$ see~\cite[Cor.~6.3]{hauswirthProjectedDynamicalSystems2018}). In light of \cref{lem:pds_decomp}, we can show (next) that solutions of~\eqref{eq:pds_ivp} are equivalent to solutions of
\begin{align}\label{eq:ivp_alt}
	\dot x \in F(x) := f(x) - N^G_x \calC  \cap \gamma \bbB \qquad x \in \calC \,,
\end{align}
where $\gamma \geq \sup_{x \in \calC} \| f(x) \|_{G(x)}$ (assuming $\sup_{x \in \calC} \| f(x) \|_{G(x)} < \infty$).
The advantage of this latter inclusion is that the mapping $F$ is bounded.

\begin{proposition}\label{prop:pds_alt_form}
	If $\calC$ is Clarke regular and $x \mapsto \| f(x) \|_{G(x)}$ is bounded, then, $x: [0, T] \rightarrow \calC$ with $T>0$ is a solution of~\eqref{eq:pds_ivp} if and only if it is a solution of~\eqref{eq:ivp_alt}.
\end{proposition}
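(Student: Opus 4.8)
The plan is to prove the two-directional equivalence between the differential equation formulation \eqref{eq:pds_ivp} and the differential inclusion \eqref{eq:ivp_alt} by leveraging \cref{lem:pds_decomp} pointwise. Since both statements concern absolutely continuous maps $x:[0,T]\to\calC$ with $x(t)\in\calC$ for all $t$, the key observation is that the two formulations differ only in what is required of $\dot x(t)$ at almost every $t$. So the whole argument reduces to showing that for each fixed $x\in\calC$, the single vector $\Pi^G_\calC[f](x)$ coincides, in the appropriate sense, with membership in the set $f(x) - N^G_x\calC\cap\gamma\bbB$.

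First I would fix $x\in\calC$ and apply \cref{lem:pds_decomp}: there is a unique $\eta\in N^G_x\calC$ with $\Pi^G_\calC[f](x)=f(x)-\eta$, and moreover $\|\eta\|_{G(x)}\le\|f(x)\|_{G(x)}$. The role of the bound $\gamma\ge\sup_{x\in\calC}\|f(x)\|_{G(x)}$ is precisely to ensure this $\eta$ lies in the truncated normal cone; here I would need to be slightly careful that $\|\eta\|_{G(x)}\le\gamma$ controls the \emph{Euclidean} norm $\|\eta\|$ appearing in $\gamma\bbB$, or alternatively argue that the truncation radius can be chosen (or reinterpreted in the $G$-norm) so that the unique decomposition vector is captured. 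This shows $\Pi^G_\calC[f](x)\in f(x)-N^G_x\calC\cap\gamma\bbB$, giving the forward direction: any solution of \eqref{eq:pds_ivp} satisfies $\dot x(t)=\Pi^G_\calC[f](x(t))\in F(x(t))$ for a.e.\ $t$, hence solves \eqref{eq:ivp_alt}.

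For the converse, suppose $x$ solves \eqref{eq:ivp_alt}, so for a.e.\ $t$ we have $\dot x(t)=f(x(t))-\eta(t)$ for some $\eta(t)\in N^G_{x(t)}\calC\cap\gamma\bbB$. Because $x$ is absolutely continuous and stays in $\calC$, for a.e.\ $t$ the derivative $\dot x(t)$ is itself a tangent vector, i.e.\ $\dot x(t)\in T_{x(t)}\calC$ (this is the standard fact that derivatives of curves in $\calC$ lie in the tangent cone). Then $f(x(t))-\eta(t)\in T_{x(t)}\calC$ together with $\eta(t)\in N^G_{x(t)}\calC$ gives, by the $G$-orthogonality characterization in \cref{lem:pds_decomp}, that $\langle f(x(t))-\eta(t),\eta(t)\rangle_{G(x(t))}=0$; this is exactly the ``if and only if'' condition identifying $\eta(t)$ as the unique normal-cone component, so $\dot x(t)=f(x(t))-\eta(t)=\Pi^G_\calC[f](x(t))$. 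Hence $x$ solves \eqref{eq:pds_ivp}.

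I expect the main obstacle to be the converse direction's use of the fact that $\dot x(t)\in T_{x(t)}\calC$ for almost every $t$, and pinning down the $G$-orthogonality that singles out the correct $\eta(t)$ from the inclusion. The subtlety is that \eqref{eq:ivp_alt} a priori only asserts $\eta(t)$ is \emph{some} element of the truncated normal cone, not the minimal-norm one, so I must invoke the tangency of $\dot x(t)$ to force the orthogonality condition $\langle\dot x(t),\eta(t)\rangle_{G}=0$ and thereby apply the uniqueness clause of \cref{lem:pds_decomp}. A secondary care point is reconciling the Euclidean truncation $\gamma\bbB$ with the $G$-norm bound from \cref{lem:pds_decomp}; I would either establish this directly or note that the choice of $\gamma$ (possibly up to the condition-number factor $\kappa$) makes the truncation inactive at the decomposition vector, so that restricting the normal cone to $\gamma\bbB$ does not discard the relevant $\eta$.
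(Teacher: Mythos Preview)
Your overall strategy matches the paper's proof, and the forward direction is fine (the paper handles the $G$-norm versus Euclidean-norm issue in $\gamma\bbB$ just as loosely as you flag it, so your caveat is appropriate). The converse direction, however, has a genuine gap.

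From $\dot x(t)=f(x(t))-\eta(t)\in T_{x(t)}\calC$ and $\eta(t)\in N^G_{x(t)}\calC$ you only obtain the inequality $\langle f(x(t))-\eta(t),\eta(t)\rangle_{G(x(t))}\le 0$, not equality. The decomposition of a vector into a tangent-cone part plus a normal-cone part is \emph{not} unique without the orthogonality condition (take $T_x\calC$ a half-space and $N^G_x\calC$ the corresponding ray: there are infinitely many such splittings), so \cref{lem:pds_decomp} cannot be invoked from those two memberships alone. The paper closes this gap by using the stronger fact that for an absolutely continuous curve in $\calC$ one has $\dot x(t)\in T_{x(t)}\calC\cap(-T_{x(t)}\calC)$ for a.e.\ $t$ (backward difference quotients give $-\dot x(t)\in T_{x(t)}\calC$ as well). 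This yields both
\[
\langle f(x(t))-\eta(t),\eta(t)\rangle_{G(x(t))}\le 0
\quad\text{and}\quad
\langle -(f(x(t))-\eta(t)),\eta(t)\rangle_{G(x(t))}\le 0,
\]
hence equality, and then the ``if and only if'' clause of \cref{lem:pds_decomp} applies. Once you replace ``$\dot x(t)\in T_{x(t)}\calC$'' with ``$\dot x(t)\in T_{x(t)}\calC\cap(-T_{x(t)}\calC)$'' your argument goes through and coincides with the paper's.
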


\begin{proof}
	Let $x:[0, T] \rightarrow \calC$ be a solution of~\eqref{eq:pds_ivp}. Then, $\Pi^G_\calC[f](x(t)) = f(x(t)) - \eta(t)$ for some $\eta(t) \in N^G_{x(t)}\calC$ satisfying $\| \eta(t) \|_{G(x(t))} \leq \| f(x(t)) \|_{G(x(t))} \leq \gamma$ by \cref{lem:pds_decomp} and therefore $\eta(t) \in N^G_{x(t)} \calC \cap \gamma \bbB$.
	Conversely, assume that $x$ solves~\eqref{eq:ivp_alt}. Whenever $\dot{x}(t)$ exists, it holds that $\dot{x}(t) \in T_{x(t)} \calC \cap - T_{x(t)} \calC$ \cite[eq. 2.6]{cornetExistenceslowsolutions1983} and $\dot{x}(t) = f(x(t))- \eta(t)$ for some $\eta(t) \in N^G_{x(t)} \calC \cap \gamma \bbB$. Thus, we have
	\begin{align*}
		\left\langle f(x(t)) - \eta(t), \eta(t) \right\rangle_{G(x(t))} \leq 0
		\qquad \text{and} \qquad
		\left\langle -f(x(t)) + \eta(t), \eta(t) \right\rangle_{G(x(t))}\leq 0 \, ,
	\end{align*}
	and therefore $\left\langle f(x(t)) - \eta(t), \eta(t) \right\rangle_{G(x(t))} = 0$
	which, in turn, implies that $f(x(t)) - \eta(t) = \Pi_\calC^G[f](x(t))$ by \cref{lem:pds_decomp}.
\end{proof}

\begin{lemma}
	If $f$ and $G$ are continuous and $\calC$ Clarke regular,~\eqref{eq:ivp_alt} is well-posed.
\end{lemma}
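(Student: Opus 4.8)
The plan is to verify directly the four requirements in the definition of well-posedness: closedness of $\calC$, non-emptiness and convexity of $F(x)$ for each $x \in \calC$, and local boundedness together with outer semicontinuity (osc) of $F$ relative to $\calC$. The first three are essentially immediate, while the osc property is the crux.

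First I would dispose of the easy items. Since $\calC$ is Clarke regular it is in particular closed. For non-emptiness, note that $0 \in N^G_x\calC$ (the normal cone always contains the origin) and $0 \in \gamma\bbB$, so $0 \in N^G_x\calC \cap \gamma\bbB$ and hence $f(x) \in F(x)$ for every $x \in \calC$. For convexity, observe that $N^G_x\calC$ is an intersection of $G(x)$-half-spaces, hence a convex cone; intersecting with the convex set $\gamma\bbB$ and then reflecting and translating by $f(x)$ preserves convexity, so $F(x)$ is convex. Local boundedness is equally direct: because $N^G_x\calC \cap \gamma\bbB \subseteq \gamma\bbB$, we have $F(x) \subseteq f(x) - \gamma\bbB$, and continuity of $f$ makes $x \mapsto f(x) - \gamma\bbB$ locally bounded relative to $\calC$.

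The main step is osc of $F$, which I would reduce to osc of the $G$-normal-cone map $x \mapsto N^G_x\calC$. Using the equivalence~\eqref{eq:norm_cone_equiv}, this map can be written as $N^G_x\calC = G^{-1}(x) N_x\calC$. To show it is osc relative to $\calC$, take sequences $x_i \to x$ in $\calC$ and $\zeta_i \to \zeta$ with $\zeta_i \in N^G_{x_i}\calC$; then $G(x_i)\zeta_i \in N_{x_i}\calC$ by~\eqref{eq:norm_cone_equiv}, and since $G$ is continuous we have $G(x_i)\zeta_i \to G(x)\zeta$. The osc of the Euclidean normal-cone map $x \mapsto N_x\calC$ (recalled in the preliminaries) gives $G(x)\zeta \in N_x\calC$, and applying~\eqref{eq:norm_cone_equiv} once more yields $\zeta = G^{-1}(x)\big(G(x)\zeta\big) \in N^G_x\calC$. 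Hence $x \mapsto N^G_x\calC$ is osc relative to $\calC$.

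Finally I would propagate osc through the two remaining operations. Intersecting the osc map $x \mapsto N^G_x\calC$ with the fixed closed ball $\gamma\bbB$ preserves osc, since any limit of points lying in $N^G_{x_i}\calC \cap \gamma\bbB$ lies in $N^G_x\calC$ by the previous paragraph and in $\gamma\bbB$ because the ball is closed; moreover this intersected map is locally bounded (contained in $\gamma\bbB$). The difference $f(x) - \big(N^G_x\calC \cap \gamma\bbB\big)$ is then osc because $f$ is continuous and the subtracted map is osc and locally bounded: given $x_i \to x$ and $z_i = f(x_i) - m_i \to z$ with $m_i \in N^G_{x_i}\calC \cap \gamma\bbB$, local boundedness lets us extract $m_{i_k} \to m$, whence $z = f(x) - m$ with $m \in N^G_x\calC \cap \gamma\bbB$ by osc, so $z \in F(x)$. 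This establishes osc of $F$ and completes the verification of well-posedness. I expect the only delicate point to be the osc argument for $N^G$, where the continuity of $G$ and the algebraic equivalence~\eqref{eq:norm_cone_equiv} are essential, as they let the well-behaved Euclidean normal-cone map carry the osc property over to the metric-dependent one.
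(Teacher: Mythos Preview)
Your proof is correct and follows essentially the same route as the paper's: both verify non-emptiness and convexity directly from $0 \in N^G_x\calC$, establish osc of $x \mapsto N^G_x\calC$, and then propagate osc and local boundedness through the intersection with $\gamma\bbB$ and the translation by the continuous $f$. The only difference is cosmetic: the paper cites an external lemma for the osc of $x \mapsto N^G_x\calC$ and a reference in Rockafellar--Wets for the truncation step, whereas you supply short direct arguments via~\eqref{eq:norm_cone_equiv} and sequential verification.
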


\begin{proof}
	Non-emptiness and convexity of $F(x)$ are immediate because $N^G_x \calC \cap \gamma \bbB$ is non-empty (in particular, $0 \in N^G_x \calC$) and convex for all $x \in \calC$ (and $f$ is single-valued). For outer semicontinuity recall that for a Clarke regular $\calC$ and continuous $G$ the mapping $x \mapsto N^G_x \calC$ is osc~\cite[Lem.~A.6]{hauswirthProjectedDynamicalSystems2018}. It then follows that the truncation $ N^G_x \calC \cap \gamma \bbB$ is osc and locally bounded \cite[p.161]{rockafellarVariationalAnalysis2009}. Finally, since $f$ is continuous and single-valued, $x \mapsto f(x) - N^G_x \calC \cap \gamma \bbB$ is osc and locally bounded.
\end{proof}

\section{Problem Formulation \& Technical Results}\label{sec:tech}

Throughout the paper, we consider the system given by the (unconstrained) inclusion
\begin{align}\label{eq:sys_aw_gen}
	\dot z \in F_K(z) := f(z, P_\calZ(z)) - \tfrac{1}{K} G^{-1}(P_\calZ(z)) \big ( z - P_\calZ(z) \big) \, ,
\end{align}
where $\calZ \subset \bbR^n$ is a closed set, $f: \bbR^n \times \calZ \rightarrow \bbR^n$ is a continuous vector field, $G: \calZ \rightarrow \bbS^n_+$ is a continuous metric, and $K > 0$ is a constant parameter. Because $P_\calZ$ is in general not single-valued (unless $\calZ$ is convex),~\eqref{eq:sys_aw_gen} has to be treated as a differential inclusion.

Systems of the form~\eqref{eq:sys_aw_gen} arise in the context of anti-windup control for feedback loops with integral controllers, as will be discussed in \cref{sec:aw_appl}. Hence, we will refer to~\eqref{eq:sys_aw_gen} as an \emph{anti-windup approximation} (AWA).

We study the behavior of solutions of~\eqref{eq:sys_aw_gen} as $K \rightarrow 0^+$ and show that, under appropriate assumption on $\calZ, f$, $G$, and for an initial condition $z(0) \in \calZ$, these solutions \emph{converge uniformly} to solutions of the projected dynamical system
\begin{align}\label{eq:aw_pds}
	\dot z = \Pi^G_\calZ [\hat{f}](z) \, , \qquad z \in \calZ \, ,
\end{align}
where we use $\hat{f}(z) := f(z, P_\calZ(z))$. Further, we show that a compact globally asymptotically set for~\eqref{eq:aw_pds} is \emph{semiglobally practically asymptotically stable} for~\eqref{eq:sys_aw_gen} in~$K$. Namely, if $\calA$ is compact and asymptotically stable  compact for the PDS~\ref{eq:aw_pds}, then for any compact set of initial conditions $\calB$ and any $\zeta > 0$, there exists $K > 0$ such that all trajectories of the AWA~\eqref{eq:sys_aw_gen} starting in $\calB$ converge to a subset of $\calA + \zeta \bbB$.

The key idea for studying~\eqref{eq:sys_aw_gen} is to exploit $\alpha$-prox-regularity of $\calZ$ which, according to \cref{prop:prox} guarantees, that $P_\calZ(z)$ is single-valued for all
\begin{align*}
	z \in \calZ^\circ_\alpha := \calZ + \tfrac{1}{2\alpha} \interior \bbB \, .
\end{align*}
Hence, on $\calZ^\circ_\alpha$,~\eqref{eq:sys_aw_gen} reduces to an ODE. Further, under appropriate conditions on the problem parameters and for small enough $K$, trajectories starting in $\calZ$ remain in $\calZ^\circ_\alpha$. This insight will be rigorously established in \cref{subsec:exist}. In \cref{subsec:perturb} we then show that the AWA~\eqref{eq:sys_aw_gen} corresponds to a $\sigma$-perturbation of the PDS~\eqref{eq:aw_pds} as a function of $K$. We then apply standard results from~\cite{goebelHybridDynamicalSystems2012} to establish uniform convergence and semiglobal practical asymptotic stability in \cref{sec:unif,sec:rob}, respectively.

\subsection{Existence, Local Uniform Boundedness, and Equicontinuity}\label{subsec:exist}

As a first step in studying~\eqref{eq:sys_aw_gen}, we prove the following lemma for future reference:

\begin{lemma}\label{lem:vec_field}
	Let $\calZ \subset \bbR^n$ be closed and $f: \bbR^n \times \calZ$ be continuous. Then, $z \mapsto \hat{f}(z) := f(z, P_\calZ(z))$ is locally bounded and osc. Furthermore, if $\calZ$ is $\alpha$-prox-regular for $\alpha > 0$, then $\hat{f}$ is single-valued and continuous for all $z \in \calZ^\circ_\alpha$.
\end{lemma}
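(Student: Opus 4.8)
The plan is to build everything on two standard facts about the metric projection onto the closed set $\calZ$: that $P_\calZ$ is locally bounded and osc. First I would record these. Local boundedness follows by fixing any $\bar z \in \calZ$ and noting that every $y \in P_\calZ(z)$ satisfies $\|y\| \le \|z\| + d_\calZ(z) \le 2\|z\| + \|\bar z\|$, which is bounded on bounded sets. For osc I would check that the graph of $P_\calZ$ is closed: given $z_k \to z$ and $y_k \to y$ with $y_k \in P_\calZ(z_k)$, closedness of $\calZ$ gives $y \in \calZ$, and passing to the limit in $\|z_k - y_k\| = d_\calZ(z_k)$, using continuity of the norm and of the distance function, yields $\|z - y\| = d_\calZ(z)$, i.e. $y \in P_\calZ(z)$.

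With these in hand, local boundedness of $\hat f$ is immediate: given $z$, local boundedness of $P_\calZ$ provides a neighborhood $U$ of $z$ and a compact $L$ with $P_\calZ(U) \subset L \cap \calZ$; since $f$ is continuous it is bounded on the compact set $\cl{U} \times (L \cap \calZ)$, so $\hat f(z') = f(z', P_\calZ(z'))$ is uniformly bounded on $U$.

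For osc of $\hat f$ I would again show the graph is closed. Taking $z_k \to z$ and $w_k \to w$ with $w_k = f(z_k, y_k)$ and $y_k \in P_\calZ(z_k)$, the local boundedness of $P_\calZ$ makes $\{y_k\}$ bounded, so a subsequence satisfies $y_{k_j} \to y$; osc of $P_\calZ$ then gives $y \in P_\calZ(z)$ and continuity of $f$ gives $w = f(z,y) \in \hat f(z)$. The one genuinely load-bearing step, and the main obstacle, is this subsequence extraction, since it is exactly what lets the osc of the inner map $P_\calZ$ be composed with the continuity of the outer map $f$; without local boundedness the limit $y$ need not exist.

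Finally, for the prox-regular case I would invoke \cref{prop:prox}, which guarantees that $P_\calZ(z)$ is a singleton for all $z \in \calZ^\circ_\alpha$; hence $\hat f$ is single-valued there. To upgrade osc to continuity on this open set I would use that a single-valued, osc, locally bounded map is automatically continuous: if $z_k \to z$ in $\calZ^\circ_\alpha$ but $\hat f(z_k) \not\to \hat f(z)$, local boundedness extracts a subsequence along which $\hat f$ converges to some $w$, osc forces $w \in \hat f(z) = \{\hat f(z)\}$, and so $w = \hat f(z)$, contradicting the assumed separation. Equivalently, one may first conclude continuity of $P_\calZ$ on $\calZ^\circ_\alpha$ by the same argument and then write $\hat f$ as the composition of the continuous maps $z \mapsto (z, P_\calZ(z))$ and $f$.
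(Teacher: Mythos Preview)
Your proof is correct and follows essentially the same route as the paper: establish that $P_\calZ$ is osc and locally bounded, then push these properties through the composition with the continuous map $f$, and finally use \cref{prop:prox} for the prox-regular case. The paper's proof simply cites standard results (namely \cite[Ex.~5.23]{rockafellarVariationalAnalysis2009} for $P_\calZ$ and \cite[Prop.~5.51~\&~5.52]{rockafellarVariationalAnalysis2009} for preservation under composition) where you unpack the arguments by hand, and it abbreviates your final continuity argument to the parenthetical ``single-valued (hence continuous)''.
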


\begin{proof}
	The projection $P_\calZ: \bbR^n \rightrightarrows \calZ$ is osc and locally bounded~\cite[Ex.~5.23]{rockafellarVariationalAnalysis2009}, and $P_\calZ(z)$ is non-empty and closed for all $z \in \bbR^n$ (since $\calZ$ is closed). By continuity of $f$ it follows that $\hat{f}$ is osc and locally bounded, since both properties are preserved under addition and composition~\cite[Prop.~5.51~\&~5.52]{rockafellarVariationalAnalysis2009}. Using \cref{prop:prox} it follows that $\hat{f}$ is single-valued (hence continuous) for $z \in \calZ_\alpha^\circ$.
\end{proof}

\cref{lem:vec_field} and \cref{prop:prox} imply that, on $\calZ^\circ_\alpha$, $F_K$ is single-valued and continuous. Consequently, standard results for ODEs guarantee that~\eqref{eq:sys_aw_gen} admits a (local) solution for every initial condition $z(0) \in \calZ^\circ_\alpha$. However, outside of $\calZ^\circ_\alpha$,~\eqref{eq:sys_aw_gen} is a differential inclusion for which the existence of solutions is not immediately guaranteed.
Nevertheless, one can establish the existence of so-called~\emph{Krasovskii solutions}~\cite{goebelHybridDynamicalSystems2012}.

For the main result of this section we consider the following (local) setup:

\begin{assumption}\label{ass:local}
	Consider~\eqref{eq:sys_aw_gen} and $z_0 \in \calZ$. Let $M, \nu, \mu, \alpha, \epsilon > 0$ be such that
	\begin{align}\label{eq:lin_growth}
		\| f(z, P_\calZ(z)) \| \leq M \qquad \text{and} \qquad
		\mu \bbI \preceq G^{-1}(P_\calZ(z)) \preceq \nu \bbI
	\end{align}
	hold for all $z \in (z_0 + \epsilon \bbB) \cap \calZ^\circ_\alpha$, and $\calZ$ is $\alpha$-prox-regular at every $z \in (z_0 + \epsilon \bbB) \cap Z$.
\end{assumption}

Parameters $M, \mu, \nu, \epsilon$ that satisfy~\eqref{eq:lin_growth} can always be found for any $z_0 \in \calZ$ since $z \mapsto f(z, P_\calZ(z))$ is locally bounded by \cref{lem:vec_field}, $G$ is continuous, and $P_\calZ$ is single-valued on $(z_0 + \epsilon \bbB) \cap \calZ^\circ_\alpha$.

\cref{ass:local} allows us to formulate the following proposition which combines the existence of truncated solutions, the invariance of a neighborhood of $\calZ$, and equicontinuity (i.e. uniform Lipschitz continuity):

\begin{proposition}\label{prop:exist}
	Let \cref{ass:local} be satisfied for $z_0 \in \calZ$. Given any $T > 0$ and $K < \frac{\mu}{2 \alpha M}$, there exists a $(T, \epsilon)$-truncated solution $z$ for~\eqref{eq:sys_aw_gen} with $z(0) = z_0$ (where $\epsilon$ stems from \cref{ass:local}). Furthermore, $z$ satisfies, for almost all $t \in \dom z$,
	\begin{align*}
		z(t) \in \calZ + \tfrac{K M}{\mu} \bbB
		\qquad \text{and} \qquad
		\| \dot{z}(t) \| \leq \left(1 + \tfrac{\nu}{\mu} \right)M \, .
	\end{align*}

\end{proposition}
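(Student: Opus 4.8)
The plan is to extract a truncated solution from the well-posedness machinery of the preliminaries and then confine it to a thin tube around $\calZ$ on which~\eqref{eq:sys_aw_gen} reduces to a genuine ODE, using the squared distance $d^2_\calZ$ as a barrier. First I would fix the geometry. Since $K < \tfrac{\mu}{2\alpha M}$, the radius $\rho := \tfrac{KM}{\mu}$ satisfies $\rho < \tfrac{1}{2\alpha}$, so the closed tube $\calZ + \rho \bbB$ is contained in the open set $\calZ^\circ_\alpha$. On $\calZ^\circ_\alpha$, \cref{prop:prox} makes $P_\calZ$ single-valued and \cref{lem:vec_field} makes $\hat{f}$ continuous, hence $F_K$ is single-valued and continuous there; globally $F_K$ is locally bounded (composition of the locally bounded $P_\calZ$ with continuous maps), so its Krasovskii regularization is osc, locally bounded and nonempty convex-valued, i.e.\ well-posed with $\calC = \bbR^n$. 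The existence corollary for truncated solutions (following \cref{thm:filippov}) then yields a $(T,\epsilon)$-truncated solution $z$ with $z(0)=z_0$, and because the regularization coincides with the single-valued $F_K$ on the open set $\calZ^\circ_\alpha$, any portion of $z$ lying in $\calZ^\circ_\alpha$ is an ordinary solution of~\eqref{eq:sys_aw_gen}.

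Next I would run the barrier estimate. On $\calZ^\circ_\alpha$, \cref{prop:prox} gives $d^2_\calZ \in C^1$ with $\nabla d^2_\calZ(z) = 2(z - P_\calZ(z))$, so $\phi(t) := d^2_\calZ(z(t))$ is absolutely continuous with $\dot\phi = \langle 2e, \dot z\rangle$, where $e := z - P_\calZ(z)$ and $p := P_\calZ(z)$. Substituting $\dot z = f(z,p) - \tfrac{1}{K}G^{-1}(p)e$ and using~\eqref{eq:lin_growth} ($\|f\| \le M$, $G^{-1}(p)\succeq\mu\bbI$) together with Cauchy--Schwarz gives
\[
\dot\phi = 2\langle e, f(z,p)\rangle - \tfrac{2}{K}\langle e, G^{-1}(p)e\rangle \le 2M\|e\| - \tfrac{2\mu}{K}\|e\|^2 ,
\]
which, since $\|e\| = d_\calZ(z) = \sqrt{\phi}$, is nonpositive whenever $\sqrt{\phi}\ge\rho$. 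As $\phi(0)=d^2_\calZ(z_0)=0$, a standard comparison argument (if $\phi$ ever exceeded $\rho^2$, it would have to increase across the level $\rho^2$ where $\dot\phi\le 0$, a contradiction) forces $\phi(t)\le\rho^2$, i.e.\ $d_\calZ(z(t))\le\rho$.

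To turn this into invariance I would use a first-exit argument that resolves the apparent circularity, since the estimate above presupposes $z\in\calZ^\circ_\alpha$, which is precisely what must be shown. Let $\tau:=\sup\{t:\ z(s)\in\calZ^\circ_\alpha\ \text{for all } s<t\}$; note $z(0)=z_0\in\calZ\subset\calZ^\circ_\alpha$ and that $z(t)\in z_0+\epsilon\bbB$ throughout (truncation), so the bounds of \cref{ass:local} apply on $[0,\tau)$. There the ODE and the barrier estimate hold, whence $d_\calZ(z(t))\le\rho<\tfrac{1}{2\alpha}$, so $z(t)\in\calZ+\rho\bbB$ and, passing to the limit, $z(\tau)\in\calZ+\rho\bbB\subset\calZ^\circ_\alpha$. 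Since $\calZ^\circ_\alpha$ is open, if $\tau$ were strictly less than the terminal time of $z$ the solution would remain in $\calZ^\circ_\alpha$ slightly past $\tau$, contradicting maximality; hence $z$ stays in $\calZ+\rho\bbB = \calZ+\tfrac{KM}{\mu}\bbB$ on its whole domain, which is the first claimed bound and also confirms that $z$ genuinely solves~\eqref{eq:sys_aw_gen}.

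Finally, the velocity bound follows pointwise: on the tube $\|e\|\le\rho=\tfrac{KM}{\mu}$, and using the upper bounds in~\eqref{eq:lin_growth} ($\|f\|\le M$ and $G^{-1}(p)\preceq\nu\bbI$, so $\|G^{-1}(p)e\|\le\nu\|e\|$),
\[
\|\dot z\| \le \|f(z,p)\| + \tfrac{1}{K}\|G^{-1}(p)e\| \le M + \tfrac{\nu}{K}\,\tfrac{KM}{\mu} = \bigl(1+\tfrac{\nu}{\mu}\bigr)M .
\]
The main obstacle I expect is making the confinement rigorous, that is, the comparison step for the differential inequality combined with the first-exit argument that keeps the trajectory in the region where the Krasovskii regularization collapses to the single-valued continuous field; the remaining eigenvalue and Cauchy--Schwarz bookkeeping displayed above is routine.
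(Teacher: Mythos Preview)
Your proposal is correct and follows essentially the same approach as the paper: the barrier computation with $d^2_\calZ$, the invariance/first-exit argument confining the trajectory to $\calZ + \tfrac{KM}{\mu}\bbB \subset \calZ^\circ_\alpha$, and the velocity bound are all identical. The only minor difference is in the existence step: the paper invokes \cref{thm:filippov} directly for the continuous ODE on the set $(z_0+\epsilon\bbB)\cap\calZ^\circ_\alpha$ to obtain a maximal solution reaching its boundary, and then argues that the boundary point must lie on $\partial(z_0+\epsilon\bbB)$ rather than $\partial\calZ^\circ_\alpha$; you instead take a global Krasovskii regularization to secure a $(T,\epsilon)$-truncated solution up front and then show it never leaves $\calZ^\circ_\alpha$, which is an equivalent packaging of the same first-exit logic.
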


\begin{proof}
	First, we consider the existence of solutions: As mentioned, \cref{lem:vec_field} and \cref{prop:prox} imply that, on $(z_0 + \epsilon \bbB) \cap \calZ^\circ_\alpha$,~\eqref{eq:sys_aw_gen} reduces to a continuous ODE which is a well-posed inclusion (trivially). Hence, \cref{thm:filippov} guarantees the existence of a \emph{maximal} solution $z: [0, T'] \rightarrow (z_0 + \epsilon \bbB) \cap \calZ^\circ_\alpha$ starting at $z_0$ and with $x(T')$ on the boundary of $(z_0 + \epsilon \bbB) \cap \calZ^\circ_\alpha$.

	Next, by \cref{prop:prox}, we have $\nabla d_\calZ^2(z) = 2 (z - P_\calZ(z))$ for all $z \in \calZ^\circ_\alpha$. Hence, the Lie derivative of $d^2_\calZ$ along $F_K$ for all $z \in (z_0 + \epsilon \bbB) \cap \calZ^\circ_\alpha$ is well-defined and satisfies
	\begin{align*}
		\calL_{F_K} \left( \tfrac{1}{2} d^2_\calZ (z) \right)
		 & = (z - P_\calZ(z))^T \left( f(z, P_\calZ(z)) - \tfrac{1}{K} G^{-1}(P_\calZ(z))(z - P_\calZ(z) \right)                   \\
		 & \leq d_\calZ(z) \| f(z, P_\calZ(z))\| - \tfrac{1}{K} (z - P_\calZ(z))^T G^{-1}(P_\calZ(z)) \left(z - P_\calZ(z) \right) \\
		 & \leq d_\calZ(z) \| f (z, P_\calZ(z)) \| - \tfrac{\mu}{K} d^2_\calZ(z)
		= d_\calZ(z) ( M - \tfrac{\mu}{K} d_\calZ(z)) \, .
	\end{align*}
	It follows that $\calL_{F_K} \left( \tfrac{1}{2} d^2_\calZ (z ) \right) < 0$ whenever $d_\calZ(z) > \frac{K M}{\mu}$. Since $K < \frac{\mu}{2\alpha M}$ and using an invariance argument, it follows that $z(t)
		\in \calZ + \tfrac{K M}{\mu} \bbB	\subset \calZ^\circ_\alpha$ for all $t \in [0, T']$.

	In other words, for small enough $K$, any solution of~\eqref{eq:sys_aw_gen} starting at $z_0$ remains within a neighborhood of $\calZ$ on which the projection $P_\calZ$ is single-valued.

	Since $z(T')$ lies on the boundary of $(z_0 + \epsilon \bbB) \cap \calZ^\circ_\alpha$, but at the same time $z(T') \in \calZ + \frac{KM}{\mu} \bbB$, it follows that $\| z(T') - z_0 \| = \epsilon$. In other words, $z(T')$ lies on the boundary of $z_0 + \epsilon \bbB$ (rather than the boundary of $\calZ^\circ_\alpha$). Hence, (after restricting $z$ to $[0, T]$ if $T'> T$) it can be concluded that $z$ is a $(T, \epsilon)$-truncated solution of~\eqref{eq:sys_aw_gen}.

	Finally, we have that for all $z \in (\calZ + \frac{K M}{\mu} \bbB) \cap (z_0 + \epsilon \bbB)$ it holds that
	\begin{align*}
		\left\| \tfrac{1}{K} G(z)^{-1} (z - P_\calZ(z)) \right\| \leq \tfrac{1}{K} \nu \tfrac{K M}{\mu} \leq M\tfrac{\nu}{\mu } \, .
	\end{align*}
	It then follows from the definition of $M$ and the triangle inequality that $\| F_K(z) \| \leq M + M \tfrac{\nu}{\mu}$,	thus establishing the bound on $\| \dot z(t) \|$.
\end{proof}

The proof of \cref{prop:exist} suggests that the prox-regularity assumption on $\calZ$ is primarily required for $d_\calZ^2(z)$ to have a single-valued derivative in a neighborhood of $\calZ$. The following example shows, however, that prox-regularity is a more fundamental requirement which, in general, cannot be avoided.

\begin{example}\label{ex:pds_exist}
	Consider the set $\calZ := \{ (z_1, z_2) \in \bbR^2 \, | \, |z_2| \geq \max\{ 0, z_1\}^\kappa \}$ for any $\frac{1}{2} < \kappa < 1$. Further assume that $G(z) = \bbI$ and $f(z) = (1, 0)$ for all $z \in \bbR^n$. Hence, we can choose $M = \nu = \mu = 1$ and any $\epsilon > 0$ to satisfy \cref{ass:local}. Note, however, that $\calZ$ is not prox-regular at $(0,0)$. Namely, every point on the positive $z_1$-axis has a non-unique projection onto $\calZ$ as illustrated in \cref{fig:prx_counter}.

	\begin{figure}
		\centering
		\begin{subfigure}[t]{0.20\columnwidth}
			\includegraphics[width=\columnwidth]{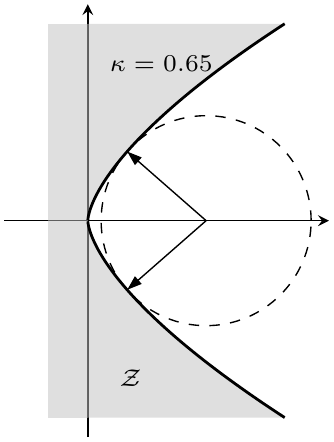}
			\caption{}\label{fig:prx_counter}
		\end{subfigure} \hspace{.15\columnwidth}
		\begin{subfigure}[t]{0.20\columnwidth}
			\includegraphics[width=\columnwidth]{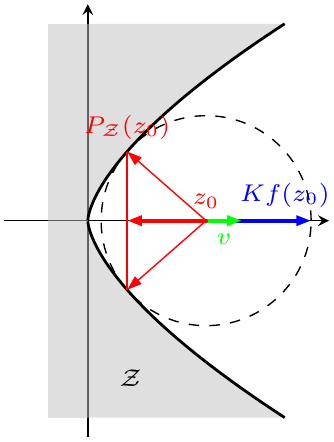}
			\caption{}\label{fig:prx_counter_kras}
		\end{subfigure} \hspace{.015\columnwidth}
		\vspace{-.7cm}
		\caption{Non-prox-regular set for \cref{ex:pds_exist}: (a) non-uniqueness of projection for every $(z_1, 0)$ (b) construction of Krasovskii regularization, namely $v \in \cocl F_K(z_0)$.}
	\end{figure}

	We claim that for every $K > 0$ there exists a Krasovskii\footnote{We cannot rely on the existence of Carath\'eodory solutions because $\calZ$ is not prox-regular and \cref{prop:exist} does not apply, but every Carath\'eodory solution (if it exits) is a Krasovskii solution.} solution (i.e., a solution of the inclusion $z \in \cocl F_K(z)$) starting on the $z_1$-axis that leaves the set $\calZ + \frac{K M}{\mu} \bbB$ established in \cref{prop:exist}.
	This can be deduced graphically from \cref{fig:prx_counter_kras}. Namely, let $z_0 = (z_{01},0)$ be such that $d_{\calZ} (z_0) = K$. Then, there exists $v = (v_1, 0)$ with $v_1 > 0$ in the Krasovskii-regularization of $F_K(z_0)$, i.e., $v \in \cocl F_K(z_0)$. In other words, on the boundary of $\calZ + K \bbB$, the vector $v$ points out of the (supposedly) invariant set. This, in turn, can be used to rigorously establish that the set $\calZ + K \bbB$ is not invariant, illustrating that the conclusion of \cref{prop:exist} does not hold without prox-regularity of $\calZ$, even when considering more general Krasovskii solutions.
\end{example}

\subsection{Anti-Windup Trajectories as Perturbed PDS}\label{subsec:perturb}

As a key technical result, we establish that solutions of the AWA \eqref{eq:sys_aw_gen} are also solutions of a $\sigma$-perturbation of the \pds in its alternate form~\eqref{eq:ivp_alt}. To prove this claim, consider $z_0 \in \calZ$, and let $M, \mu, \nu, \alpha, \epsilon > 0$ be such that \cref{ass:local} is satisfied. It follows from \cref{prop:pds_alt_form} that, for some $T>0$, every $(T, \epsilon)$-truncated solution $z: [0, T'] \rightarrow (z_0+ \epsilon \bbB)$ of the \pds \eqref{eq:aw_pds} with $z(0) = z_0$ is also a $(T, \epsilon)$-truncated solution of the inclusion
\begin{align}\label{eq:aw_sys_altform}
	\dot z \in \hat{F}(z) := f(z, P_\calZ(z)) - N^G_z \calZ \cap \gamma \bbB \quad \text{where} \quad \gamma := \max \left\{ \tfrac{M}{\sqrt{\mu}}, \tfrac{\nu}{\mu} M \right\}
\end{align}
and vice versa. This choice of $\gamma$ will be convenient in the proof of \cref{prop:perturbation} below. For now, note that using Cauchy-Schwarz, it holds that
\begin{align*}
	\sup_{z \in z_0 + \epsilon \bbB} \| f(z, P_\calZ(z)) \|_{G(z)} \leq \sup_{z \in z_0 + \epsilon \bbB} \underbrace{\sqrt{\| G(z) \|}}_{\leq 1/\sqrt{\mu}} \underbrace{\| f(z, P_\calZ(z)) \|}_{\leq M} \leq \gamma \, ,
\end{align*}
thus satisfying the condition on $\gamma$ in~\eqref{eq:ivp_alt} and \cref{prop:pds_alt_form}.

Furthermore, given $z_0 \in \calZ$, let \cref{ass:local} hold with some $\epsilon > 0$. By \cref{lem:vec_field} we have that $z \mapsto f(z, P_\calZ(z))$ is continuous on $z \in \calZ^\circ_\alpha$ and hence uniformly continuous on the bounded set $\calZ^\circ_\alpha \cap (z_0 + \epsilon \bbB)$. As a consequence of uniform continuity there exists $\omega \in \calK_\infty$ such that, for all $z, z' \in \calZ^\circ_\alpha \cap (z_0 + \epsilon \bbB)$, we have
\begin{align}\label{eq:f_cont}
	\| f(z, P_\calZ(z)) - f(z', P_\calZ(z')) \| \leq \omega( \| z - z' \|) \, .
\end{align}

\begin{proposition}\label{prop:perturbation}
	Consider $z_0 \in \calZ$ and let \cref{ass:local} hold with $M, \nu, \mu, \alpha$ and $\epsilon$. Further, let $K < \frac{\mu}{2 \alpha M}$. Then, for some $T >0$, every $(T, \epsilon)$-truncated solution $z:[0, T'] \rightarrow (z_0 + \epsilon \bbB)$ of~\eqref{eq:sys_aw_gen} is a solution of the $\sigma$-perturbation of~\eqref{eq:aw_sys_altform} with $\sigma := \max \left\{ \frac{K M}{\mu}, \omega \left(\frac{K M}{\mu} \right) \right\}$, where $\omega \in \calK_\infty$ satisfies~\eqref{eq:f_cont}.
\end{proposition}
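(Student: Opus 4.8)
The plan is to take an arbitrary $(T,\epsilon)$-truncated solution $z$ of the AWA~\eqref{eq:sys_aw_gen} and verify directly that, at almost every $t$, the pair $(z(t),\dot z(t))$ satisfies the two defining inclusions of the $\sigma$-perturbation of~\eqref{eq:aw_sys_altform}: namely $z(t) \in \calZ_\sigma = \calZ + \sigma\bbB$ (the state constraint) and $\dot z(t) \in \hat F_\sigma(z(t)) = \cocl \hat F((z(t)+\sigma\bbB)\cap\calZ) + \sigma\bbB$ (the velocity inclusion). The existence of such a truncated solution, together with the crucial invariance estimate $z(t)\in\calZ + \tfrac{KM}{\mu}\bbB$ and the velocity bound, is already guaranteed by \cref{prop:exist} under the hypotheses $K < \tfrac{\mu}{2\alpha M}$ and \cref{ass:local}.

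\textbf{State constraint.} First I would dispatch the inclusion $z(t)\in\calZ_\sigma$. By \cref{prop:exist} we have $z(t)\in\calZ + \tfrac{KM}{\mu}\bbB$ for all $t\in[0,T']$, and since $\sigma = \max\{\tfrac{KM}{\mu}, \omega(\tfrac{KM}{\mu})\} \geq \tfrac{KM}{\mu}$, this gives $z(t)\in\calZ + \sigma\bbB = \calZ_\sigma$ immediately.

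\textbf{Velocity inclusion.} The substantive step is to show $\dot z(t)\in\hat F_\sigma(z(t))$ for almost all $t$. Fix such a $t$ and write $p := P_\calZ(z(t))$, which is single-valued because $z(t)\in\calZ + \tfrac{KM}{\mu}\bbB\subset\calZ^\circ_\alpha$. The AWA velocity decomposes as
\begin{align*}
	\dot z(t) = f(z(t),p) - \tfrac{1}{K}G^{-1}(p)\big(z(t)-p\big).
\end{align*}
The idea is to compare this with the value of $\hat F$ at the \emph{projected} point $p\in\calZ$, so I would split the expression into two contributions. For the normal-cone term, I would invoke \cref{prop:prox}: since $z(t)-p$ is a normal direction to $\calZ$ at $p$, the vector $\tfrac{1}{K}G^{-1}(p)(z(t)-p)$ lies in $N^G_p\calZ$ by the equivalence~\eqref{eq:norm_cone_equiv} (as $G^{-1}(p)$ applied to a Euclidean normal lands in the $G$-normal cone), and one checks its $G$-norm is bounded by $\gamma$ using $\mu\bbI\preceq G^{-1}(p)\preceq\nu\bbI$, $\|z(t)-p\|\leq\tfrac{KM}{\mu}$, and the specific choice $\gamma = \max\{\tfrac{M}{\sqrt\mu},\tfrac{\nu}{\mu}M\}$—this is exactly why that value of $\gamma$ was pre-selected. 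Hence $\tfrac{1}{K}G^{-1}(p)(z(t)-p)\in N^G_p\calZ\cap\gamma\bbB$, and therefore $f(p,P_\calZ(p)) - \tfrac{1}{K}G^{-1}(p)(z(t)-p)\in\hat F(p)$, with $P_\calZ(p)=p$ since $p\in\calZ$.

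\textbf{Absorbing the mismatch into the $\sigma$-ball.} What remains is to account for the discrepancy between $f(z(t),p)$ and $f(p,p)$, i.e.\ the fact that $\hat F$ is evaluated at $p$ rather than at $z(t)$. Here I would use the uniform-continuity modulus~\eqref{eq:f_cont}: since both $z(t)$ and $p$ lie in $\calZ^\circ_\alpha\cap(z_0+\epsilon\bbB)$ and $\|z(t)-p\| = d_\calZ(z(t))\leq\tfrac{KM}{\mu}$, we get $\|f(z(t),p)-f(p,p)\|\leq\omega(\tfrac{KM}{\mu})\leq\sigma$. Writing
\begin{align*}
	\dot z(t) = \Big(f(p,p) - \tfrac{1}{K}G^{-1}(p)(z(t)-p)\Big) + \big(f(z(t),p)-f(p,p)\big),
\end{align*}
the first bracket lies in $\hat F(p)\subset\hat F((z(t)+\sigma\bbB)\cap\calZ)\subset\cocl\hat F((z(t)+\sigma\bbB)\cap\calZ)$—using $p\in(z(t)+\sigma\bbB)\cap\calZ$ because $\|z(t)-p\|\leq\tfrac{KM}{\mu}\leq\sigma$—while the second bracket has norm at most $\sigma$, hence lies in $\sigma\bbB$. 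Adding these places $\dot z(t)$ in $\cocl\hat F((z(t)+\sigma\bbB)\cap\calZ) + \sigma\bbB = \hat F_\sigma(z(t))$, as required.

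\textbf{Main obstacle.} I expect the only delicate point to be the careful bookkeeping that ties the three occurrences of $\tfrac{KM}{\mu}$ together: one must simultaneously ensure that the normal vector has $G$-norm at most $\gamma$ (calibrating $\gamma$), that $p$ lies in the $\sigma$-ball around $z(t)$ (so it is an admissible evaluation point for the convexified hull), and that the vector-field mismatch is bounded by $\sigma$ via $\omega$. All three follow from the single estimate $\|z(t)-p\|\leq\tfrac{KM}{\mu}$ from \cref{prop:exist} combined with the definitions of $\gamma$ and $\sigma$, so no genuinely hard analysis is needed—the work is in selecting the decomposition so that each leftover term is routed into the correct component ($N^G\cap\gamma\bbB$, the convex hull, or the outer $\sigma\bbB$) of the perturbed inclusion.
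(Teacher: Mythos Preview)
Your proposal is correct and follows essentially the same route as the paper's own proof: invariance from \cref{prop:exist} gives the state constraint, the normal-cone inclusion together with the operator-norm bound places the anti-windup term in $N^G_p\calZ\cap\gamma\bbB$, and the modulus $\omega$ absorbs the $f$-mismatch into the outer $\sigma\bbB$. One small slip: the truncation $N^G_p\calZ\cap\gamma\bbB$ uses the \emph{Euclidean} ball, so it is the Euclidean norm (not the $G$-norm) of $\tfrac{1}{K}G^{-1}(p)(z(t)-p)$ that must be bounded by $\gamma$---and the computation you describe via $\mu\bbI\preceq G^{-1}(p)\preceq\nu\bbI$ and $\|z(t)-p\|\leq\tfrac{KM}{\mu}$ yields precisely that Euclidean bound $\tfrac{\nu M}{\mu}\leq\gamma$.
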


\begin{proof}
	We need to show that the $(T, \epsilon)$-truncated solution $z$ satisfies
	\begin{align}\label{eq:infl_incl}
		\dot z(t) \in \tF_\sigma(z(t))\, , \qquad z(t) \in \calZ_\sigma
	\end{align}
	for almost all $t \in [0, T']$, where $\mathcal{Z}_\sigma := \calZ + \sigma\bbB$ and $
		\tF_\sigma(z) := \co \tF((z + \sigma \bbB) \cap \calZ) + \sigma \bbB$ for all $z \in \calZ_\sigma$ and with $\tF$ defined in~\eqref{eq:aw_sys_altform}.
	Note that for $z \in \calZ_\sigma$ we have that
	\begin{align}\label{eq:proj_prop}
		P_\calZ(z) \subset (z + \sigma \bbB) \cap \calZ\, .
	\end{align}

	\cref{prop:exist} guarantees that $z(t) \in \calZ + \frac{K M}{\mu} \bbB$, and since $\sigma \geq \frac{KM}{\mu}$ it follows that $z(t) \in \calZ_\sigma$ for all $t \in [0, T']$.
	For the remainder of the proof we omit the argument of $z(t)$ to simplify notation. All statements hold for almost all $t \in [0, T']$.

	Since $z - P_\calZ(z) \subset N_{P_\calZ(z)} \calZ$ for all $z \in \bbR^n$~\cite[Ex.~6.16]{rockafellarVariationalAnalysis2009} and using~\eqref{eq:norm_cone_equiv} we have
	\begin{align}\label{eq:trun_norm_prop1}
		\tfrac{1}{K} G(P_\calZ(z))^{-1} \left( z - P_\calZ(z) \right)
		\in N^G_{P_\calZ(z)} \calZ \, .
	\end{align}
	Furthermore, since $z \in \calZ + \frac{K M}{\mu}$ and using $\gamma$ as defined in~\eqref{eq:aw_sys_altform} we have that
	\begin{align}\label{eq:trun_norm_prop2}
		\left \| \tfrac{1}{K} G(P_\calZ(z))^{-1} \left( z - P_\calZ(z \right) \right\| \leq \tfrac{1}{K} \nu \tfrac{K M}{\mu} = \tfrac{\nu M}{\mu} \leq \gamma.
	\end{align}
	Combining~\eqref{eq:trun_norm_prop1} and~\eqref{eq:trun_norm_prop2} we have
	\begin{align}\label{eq:incl_pre}
		\dot z \in f (z, P_\calZ(z)) - N^G_{P_\calZ(z)}\calZ \cap \gamma \bbB \, .
	\end{align}
	Note that, in contrast to~\eqref{eq:aw_sys_altform}, the normal cone is evaluated at $P_\calZ(z)$.

	Next, using the fact that $\omega$, as defined in~\eqref{eq:f_cont}, is strictly increasing, and exploiting the definition of $\sigma$, we have
	\begin{align}\label{eq:lip_prop}
		\| f (z, P_\calZ(z)) - f (P_\calZ(z), P_\calZ(z)) \| \leq \omega\left(\| z - P_\calZ(z) \|\right) \leq \omega\left(\tfrac{KM}{\mu}\right) \leq \sigma\, .
	\end{align}

	Therefore, in summary, using~\eqref{eq:lip_prop} on~\eqref{eq:incl_pre}, as well as~\eqref{eq:proj_prop}, we have that
	\begin{align*}
		\dot z & \in f(z, P_\calZ(z))
		- N^G_{P_\calZ(z)}\calZ \cap \gamma \bbB                  \\
		       & \subset f (P_\calZ(z), P_\calZ(z)) + \sigma \bbB
		- N^G_{P_\calZ(z)}\calZ \cap \gamma \bbB                  \\
		       & = \tF(P_\calZ(z)) + \sigma \bbB
		\subset \tF((z + \sigma \bbB)\cap \calZ) + \sigma \bbB \subset \tF_\sigma(z) \, .
	\end{align*}
	Hence, $z(\cdot)$ satisfies~\eqref{eq:infl_incl} which completes the proof.
\end{proof}

\section{Uniform Convergence}\label{sec:unif}
We establish the graphical/uniform convergence of solutions of the anti-windup approximation~\eqref{eq:sys_aw_gen} to solutions of the projected dynamics~\eqref{eq:aw_pds}. This proof requires two arguments: On the one hand, we need to show that a graphically convergent sequence of solutions of~\eqref{eq:sys_aw_gen} converges to a solution of~\eqref{eq:aw_pds}. On the other hand, we need that such a graphically convergent sequence exists.

Starting with the latter requirement, we first recall that from a bounded sequence of sets, we can always extract a graphically convergent subsequence~\cite[Thm.~5.7]{goebelHybridDynamicalSystems2012}. This applies in particular to a sequence of (uniformly) truncated solutions:

\begin{lemma}\label{lem:subseq_extr}
	Consider a sequence $K_n \rightarrow 0^+$ and $z_{0} \in \calZ$. Given $T, \epsilon > 0$, any sequence $\{z_n\}$ of $(T, \epsilon)$-truncated solution of~\eqref{eq:sys_aw_gen} with $K = K_n$ and $z_n(0) = z_{0}$ has a graphically convergent subsequence.
\end{lemma}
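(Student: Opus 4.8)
The plan is to pass from the solutions to their graphs and then invoke the set-convergence compactness result cited just before the statement. First I would fix the compact box $\calK := [0,T] \times (z_0 + \epsilon \bbB) \subset \bbR^{n+1}$. By the very definition of a $(T,\epsilon)$-truncated solution, each $z_n$ is defined on some interval $[0, T_n']$ with $T_n' \leq T$ and satisfies $z_n(t) \in z_0 + \epsilon \bbB$ for all $t$ in its domain. Consequently its graph $\gph z_n = \{ (t, z_n(t)) \, | \, t \in [0, T_n'] \}$ is a nonempty compact subset of $\calK$. Thus $\{ \gph z_n \}$ is a uniformly bounded sequence of subsets of $\bbR^{n+1}$.

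Next I would regard $\{ \gph z_n \}$ as a sequence of sets and apply \cite[Thm.~5.7]{goebelHybridDynamicalSystems2012} to extract a subsequence whose graphs converge in the Painlev\'e--Kuratowski (set) sense. Since graphical convergence of a sequence of functions is, by definition, set convergence of the associated graphs, the corresponding subsequence of $\{ z_n \}$ is graphically convergent, which is exactly the claim. The required boundedness hypothesis of the cited theorem is immediate from the containment $\gph z_n \subset \calK$; it is moreover consistent with (though not needed here) the uniform velocity bound $\| \dot z_n(t) \| \leq (1 + \tfrac{\nu}{\mu}) M$ furnished by \cref{prop:exist}, which additionally makes the family $\{ z_n \}$ equi-Lipschitz.

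The only point requiring a moment of care---and the closest thing to an obstacle---is that the solutions have domains of possibly different lengths $T_n' \leq T$. This causes no difficulty, because graphical convergence is a statement about the graphs as subsets of $\bbR_{\geq 0} \times \bbR^n$ and does not presuppose a common domain; the differing endpoints are simply absorbed into the set limit. Finally, I would stress what this lemma does \emph{not} assert: the graphical limit need not a priori be (the graph of) a single-valued function, let alone a solution of~\eqref{eq:aw_pds}. Promoting the extracted graphical limit to a genuine solution of the PDS is the content of the subsequent convergence argument, where the equi-Lipschitz bound from \cref{prop:exist} is what rules out degeneracy of the limiting graph and yields uniform convergence to a Carath\'eodory solution.
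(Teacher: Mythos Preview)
Your proposal is correct and follows exactly the approach the paper takes: the paper states the lemma without a formal proof, simply remarking beforehand that a bounded sequence of sets always admits a graphically convergent subsequence by \cite[Thm.~5.7]{goebelHybridDynamicalSystems2012}, and that this applies in particular to uniformly truncated solutions. Your observation that the limit need not a priori be single-valued is likewise explicitly made in the paper immediately after the lemma.
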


\cref{lem:subseq_extr} is purely set-theoretic and does not imply that the limit $\gph \lim_{n \rightarrow \infty} z_n$ is a single-valued map. Hence, we need the following simplification\footnote{We require only the first of the two statements of the original theorem. Further, we consider the case where $\rho$ is constant. Finally, we work with truncated solutions which have, by definition, a compact domain (and thus are trivially \emph{locally eventually bounded}~\cite[Def.~5.24]{goebelHybridDynamicalSystems2012}).} of~\cite[Thm.~5.29]{goebelHybridDynamicalSystems2012}:

\begin{lemma}\label{lem:unif_cvg}
	Let the inclusion~\eqref{eq:gen_inclusion} be well-posed and $z_0 \in \calZ$.
	Further, given any $T, \epsilon, \rho > 0$ and $\delta_i \rightarrow 0^+$, let $z_i: [0, T_i] \rightarrow \calX_i$ denote a $(T, \epsilon)$-truncated solution of the $\delta_i \sigma$-perturbation of~\eqref{eq:gen_inclusion}.
	If the sequence $\{z_i\}$ converges graphically, then convergence is to a solution $z: [0, T] \rightarrow \calX$ of~\eqref{eq:gen_inclusion}, where $T = \lim_{i \rightarrow \infty} T_i$.
\end{lemma}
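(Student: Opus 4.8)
The plan is to establish this as a consequence of the general graphical convergence machinery already cited (\cite[Thm.~5.29]{goebelHybridDynamicalSystems2012}), but since the lemma is stated as a \emph{simplification} of that theorem, I would verify that the hypotheses of the general theorem are met in our restricted setting and then read off the stated conclusion. First I would recall the two structural facts that make the perturbation framework apply: (i) for each fixed $i$, the function $z_i$ solves the $\delta_i \sigma$-perturbation of~\eqref{eq:gen_inclusion}, meaning $\dot z_i(t) \in H_{\delta_i \sigma}(z_i(t))$ with $z_i(t) \in \calC_{\delta_i \sigma}$ for almost all $t$, where $H_{\delta_i\sigma}(z) = \cocl H((z + \delta_i\sigma\bbB)\cap\calC) + \delta_i\sigma\bbB$; and (ii) the perturbation parameters vanish, $\delta_i \rightarrow 0^+$. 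These are precisely the ingredients of the outer perturbation/graphical-limit theory in~\cite[Chap.~5--6]{goebelHybridDynamicalSystems2012}.

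The key steps, in order, are as follows. First I would note that because each $z_i$ is a $(T,\epsilon)$-truncated solution, its graph lies in the fixed compact box $[0,T]\times(z_0 + \epsilon\bbB)$; together with the uniform velocity bound that truncation and well-posedness provide, this gives the \emph{local eventual boundedness} needed to invoke the limit theorem (and is exactly why the footnote remarks that truncated solutions are trivially locally eventually bounded). Second, I would use the standing assumption that $\{z_i\}$ converges graphically to some limit object $z$ whose graph is the graphical limit $\gph z = \lim_{i\to\infty}\gph z_i$. Third, the heart of the argument is to show that this graphical limit is in fact an ordinary (single-valued, absolutely continuous) solution of the \emph{unperturbed} inclusion~\eqref{eq:gen_inclusion}: the graphical limit of equi-Lipschitz functions is the graph of a Lipschitz function (via Arzel\`a--Ascoli applied to the relevant subsequence), so $z$ is single-valued and absolutely continuous, and its velocities satisfy $\dot z(t) \in H(z(t))$ with $z(t)\in\calC$ because $H$ is osc, locally bounded, and convex-valued (well-posedness), so the outer perturbations $H_{\delta_i\sigma}$ graphically converge back to $H$ as $\delta_i\to 0^+$. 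Fourth, I would identify the domain: since each $T_i$ is the (truncated) terminal time and the graphs converge, the limiting horizon is $T=\lim_{i\to\infty}T_i$, and $z:[0,T]\to\calX$.

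The main obstacle, and the step deserving the most care, is the passage from "$z_i$ solves the $\delta_i\sigma$-perturbation" to "$z$ solves the unperturbed inclusion" — i.e. controlling the velocity inclusion in the limit. This requires the standard semicontinuity argument: one shows $\limsup_i H_{\delta_i\sigma}(z_i(t)) \subset H(z(t))$ using outer semicontinuity and local boundedness of $H$ relative to $\calC$ together with the inflation $H_{\delta_i\sigma}(z) = \cocl H((z+\delta_i\sigma\bbB)\cap\calC)+\delta_i\sigma\bbB$ shrinking to $H$, and then combines this with the convergence of the (weak) derivatives of $z_i$ to that of $z$, which is legitimate precisely because of the uniform Lipschitz/velocity bound that makes $\{\dot z_i\}$ weakly relatively compact in $L^1$. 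Everything else—the subsequence extraction (already handled by \cref{lem:subseq_extr}), the compact-graph containment, and the identification of $T$—is essentially bookkeeping. Since the lemma is explicitly framed as a special case of an established theorem, the honest proof is to check these hypotheses and cite~\cite[Thm.~5.29]{goebelHybridDynamicalSystems2012} for the conclusion, noting that in our setting $\rho$ is taken constant and the domains are compact so that the "locally eventually bounded" requirement is automatic.
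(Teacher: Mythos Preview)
Your proposal is correct and matches the paper's treatment: the paper does not supply an independent proof of this lemma but simply states it as a specialization of \cite[Thm.~5.29]{goebelHybridDynamicalSystems2012}, with the footnote recording exactly the reductions you identify (constant $\rho$, first conclusion only, truncated solutions making local eventual boundedness automatic). Your write-up is in fact more detailed than the paper's, since you spell out the semicontinuity and Arzel\`a--Ascoli mechanics that the cited theorem encapsulates, but the approach is the same.
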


\begin{remark}\label{rem:graph_unif_cvg}
	In the context of \cref{lem:unif_cvg}, graphical convergence implies uniform convergence to $z$ on every subinterval of $[0, T)$~\cite[Lem.~5.28]{goebelHybridDynamicalSystems2012}. Furthermore, if $T_i \geq T$ for all $i$, then convergence is uniform on $[0, T]$.
\end{remark}

Since, by \cref{prop:perturbation}, solutions of~\eqref{eq:sys_aw_gen} are solutions of a $\sigma$-perturbation of an alternate form PDS~\eqref{eq:aw_sys_altform} we can use \cref{lem:unif_cvg} to establish the following result:

\begin{proposition}\label{prop:incl}
	Given $z_0 \in \calZ$, let \cref{ass:local} be satisfied. Consider $T>0$ and a sequence $K_i \rightarrow 0^+$, and assume that a sequence of $(T, \epsilon)$-truncated solutions $z_i$ of the AWA~\eqref{eq:sys_aw_gen} with $K = K_i$ and $z_i(0) = z_0$ for all $i$ converges graphically. Then, the limit is a $(T, \epsilon)$-truncated solution of the PDS~\eqref{eq:aw_pds}.
\end{proposition}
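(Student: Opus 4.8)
The plan is to chain together three earlier results: \cref{prop:perturbation}, which recasts each AWA trajectory as a perturbed PDS; \cref{lem:unif_cvg}, which passes perturbations to the graphical limit; and \cref{prop:pds_alt_form}, which converts an alternate-form solution back into a genuine PDS solution. First I would restrict to the tail of the sequence: since $K_i \to 0^+$, only finitely many terms can violate $K_i < \frac{\mu}{2\alpha M}$, and discarding them affects neither the hypotheses nor the graphical limit, so I may assume $K_i < \frac{\mu}{2\alpha M}$ for all $i$. Then \cref{prop:perturbation} applies to each $z_i$, showing it is a $(T,\epsilon)$-truncated solution of the $\sigma_i$-perturbation of the alternate-form PDS~\eqref{eq:aw_sys_altform}, where $\sigma_i := \max\{\tfrac{K_i M}{\mu}, \omega(\tfrac{K_i M}{\mu})\}$ with $\omega \in \calK_\infty$ from~\eqref{eq:f_cont}. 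Because $\omega(0)=0$ and $\tfrac{K_i M}{\mu} \to 0^+$, the scalar radii satisfy $\sigma_i \to 0^+$.

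Next I would invoke \cref{lem:unif_cvg} with the base inclusion~\eqref{eq:gen_inclusion} instantiated as~\eqref{eq:aw_sys_altform}, matching its $\delta_i\sigma$ radius to $\sigma_i$ (e.g. $\sigma = 1$, $\delta_i = \sigma_i \to 0^+$). Its standing hypothesis is well-posedness of~\eqref{eq:aw_sys_altform}, which follows from the earlier well-posedness lemma for~\eqref{eq:ivp_alt}: on the domain $\calZ$ the projection is the identity, so the relevant vector field reduces to the continuous map $z \mapsto f(z,z)$, $G$ is continuous, and $\calZ$ is Clarke regular (being prox-regular at each point of $(z_0 + \epsilon\bbB)\cap\calZ$ by \cref{ass:local}, to which the analysis is localized). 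Since $\{z_i\}$ converges graphically by assumption, \cref{lem:unif_cvg} yields that the limit $z$ is a solution of~\eqref{eq:aw_sys_altform} on $[0, T']$ with $T' := \lim_i T_i \le T$.

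It then remains to verify that $z$ inherits the $(T,\epsilon)$-truncation property and to transfer the conclusion to the PDS~\eqref{eq:aw_pds}. For the former, the containment $z(t) \in z_0 + \epsilon\bbB$ survives the limit because the ball is closed and, by \cref{rem:graph_unif_cvg}, convergence is uniform on compact subintervals; the endpoint condition is handled by cases---if $T' = T$ it holds automatically, while if $T' < T$ then $T_i < T$ for all large $i$, forcing $\|z_i(T_i) - z_0\| = \epsilon$, and passing the pairs $(T_i, z_i(T_i))$ to the graphical limit gives $\|z(T') - z_0\| = \epsilon$. For the transfer, I would apply \cref{prop:pds_alt_form}: its hypotheses hold here since $\calZ$ is Clarke regular and $z \mapsto \|\hat{f}(z)\|_{G(z)} \le \gamma$ is bounded on $z_0 + \epsilon\bbB$, so every solution of the alternate form~\eqref{eq:aw_sys_altform} is a solution of~\eqref{eq:aw_pds}. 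As truncation is a property of the trajectory alone, $z$ is the desired $(T,\epsilon)$-truncated solution of~\eqref{eq:aw_pds}.

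The bulk of the work is bookkeeping rather than any single deep step. I expect the main obstacles to be threefold: reconciling the per-index scalar radius $\sigma_i$ from \cref{prop:perturbation} with the fixed-radius/vanishing-scale $\delta_i\sigma$ format of \cref{lem:unif_cvg}; ensuring well-posedness of~\eqref{eq:aw_sys_altform} on the relevant region, given that $\hat{f}$ is merely osc off $\calZ$ and only continuous on $\calZ$ itself; and propagating the truncation endpoint condition through the graphical limit, which is the one place where graphical (as opposed to uniform) convergence must be used directly.
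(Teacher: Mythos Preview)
Your proof is correct and follows the same architecture as the paper's: cast each $z_i$ as a perturbed solution of~\eqref{eq:aw_sys_altform} via \cref{prop:perturbation}, pass to the graphical limit using \cref{lem:unif_cvg}, convert back with \cref{prop:pds_alt_form}, and finally check the $(T,\epsilon)$-truncation condition by a closedness argument on the boundary of the cylinder $[0,T]\times(z_0+\epsilon\bbB)$. The one substantive difference is in how you reconcile the per-index radius $\sigma_i$ with the $\delta_i\rho$ format of \cref{lem:unif_cvg}: you simply take $\rho=1$ and $\delta_i=\sigma_i\to 0^+$, whereas the paper invokes \cref{lem:kinf} to factor $\omega(K_i M/\mu)\le\sigma_1(K_i)\sigma_2(M/\mu)$ and then sets $\delta_i=\max\{K_i,\sigma_1(K_i)\}$ and $\rho=\max\{M/\mu,\sigma_2(M/\mu)\}$. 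Your shortcut is valid given the simplified statement of \cref{lem:unif_cvg} (constant $\rho$) and is more economical; the paper's decomposition buys nothing extra here, though it would be the natural move if one needed a state-dependent $\rho$ as in the unabridged \cite[Thm.~5.29]{goebelHybridDynamicalSystems2012}.
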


\begin{proof}
	Let $M, \mu, \nu >0$ and $\omega \in \calK_\infty$ be defined as in \cref{ass:local} and~\eqref{eq:f_cont}, respectively. Using \cref{lem:kinf}, there exist $\sigma_1, \sigma_2 \in \calK_\infty$ such that $\omega(rs) \leq \sigma_1(r) \sigma_2(r)$ for all $r, s \geq 0$. Hence, we define $\delta_i := \max\{K_i, \sigma_1(K_i)\}$ and $\rho := \max\left\{ \tfrac{M}{\mu}, \sigma_2 \left(\tfrac{M}{\mu} \right) \right\}$.

	\cref{prop:perturbation} states that for every $K_i$, the solution $z_i$ of~\eqref{eq:sys_aw_gen} is also a solution of the $\sigma$-perturbation of~\eqref{eq:aw_sys_altform} with $\sigma := \max \left \{K_i \tfrac{M}{\mu}, \omega \left(K_i \tfrac{M}{\mu} \right) \right\}$. It follows that $z_i$ is also a solution of every $\sigma'$-perturbation of~\eqref{eq:aw_sys_altform} with $\sigma' \geq \sigma$. In particular, we can set
	\begin{align*}
		\sigma' := \delta_i \rho = \max\{K_i, \sigma_1(K_i)\} \max\left\{ \tfrac{M}{\mu}, \sigma_2(\tfrac{M}{\mu}) \right\} \geq \sigma \, ,
	\end{align*}
	and thus we have that $z_i$ is a solution of the $\delta_i \rho$-perturbation of~\eqref{eq:aw_sys_altform}.

	Since, by assumption, $\{z_i\}$ converges graphically to $z$ it follows from \cref{lem:unif_cvg} that $z$ is a solution of~\eqref{eq:aw_sys_altform}, and, by \cref{prop:pds_alt_form}, $z$ is a solution of~\eqref{eq:aw_pds}.

	Finally, we need to show that $z: [0, T'] \rightarrow (z_0 + \epsilon \bbB)$ is a $(T, \epsilon)$-truncated solution. Namely, we need to show that either $T = T'$ or $\| z(T) - z_0 \| = \epsilon$. This requirement is equivalent to $(T', z(T'))$ lying on the boundary of the cylinder $\calX := [0, T] \times (z_0 + \epsilon \bbB)$. Since, by definition, for every $i$, $z_i$ is a $(T, \epsilon)$-truncated solution of~\eqref{eq:sys_aw_gen} we have that $(T_i, z_i(T_i)) \in \partial \calX$ for all $i$. Since $\partial \calX$ is closed, it follows that the limit also lies in $\partial \calX$.
\end{proof}

Now, we can immediately combine \cref{lem:subseq_extr} and \cref{prop:incl} to arrive at our first main result about the graphical convergence of truncated solutions (i.e., local) solutions of anti-windup approximations to a projected dynamical system:

\begin{theorem}\label{thm:loc}
	Let \cref{ass:local} be satisfied for some $z_0 \in \calZ$. Given any $T>0$ (and $\epsilon > 0$ from \cref{ass:local}), consider a sequence $K_n \rightarrow 0^+$ and let $\{z_n \}$ denote a sequence of $(T, \epsilon)$-truncated solutions of the AWA~\eqref{eq:sys_aw_gen} with $K = K_n$ and $z_n(0)~=~z_0$.
	Then, there exists a subsequence of $\{ z_{n}\}$ that converges graphically to a $(T,\epsilon)$-truncated solution of the PDS~\eqref{eq:aw_pds}.
\end{theorem}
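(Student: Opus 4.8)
The plan is to assemble the statement directly from the two preceding results, \cref{lem:subseq_extr} and \cref{prop:incl}, which were set up precisely for this combination. Essentially all of the analytic work has already been done upstream—the invariance estimate that keeps trajectories inside $\calZ^\circ_\alpha$ (\cref{prop:exist}), the identification of the AWA as a $\sigma$-perturbation of the alternate-form PDS (\cref{prop:perturbation}), and the closedness of PDS solutions under graphical limits of vanishing perturbations (\cref{prop:incl})—so the theorem itself is a bookkeeping argument pairing a compactness extraction with a closedness-under-limits result.

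First I would observe that, since $K_n \to 0^+$, after discarding finitely many terms we may assume $K_n < \frac{\mu}{2\alpha M}$ for all $n$; this is the regime in which \cref{prop:exist} applies and guarantees that each $z_n$ is genuinely a $(T,\epsilon)$-truncated solution with graph contained in the compact cylinder $[0,T]\times(z_0+\epsilon\bbB)$. This is consistent with the hypothesis of the theorem, which already furnishes the sequence $\{z_n\}$. Second, I would invoke \cref{lem:subseq_extr}: the truncated solutions all share the initial condition $z_0$ and have graphs in a common bounded set, so the sequence admits a graphically convergent subsequence $\{z_{n_k}\}$.

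Third, I would apply \cref{prop:incl} to this subsequence. The relevant point is that the associated parameters $K_{n_k}$ still form a sequence tending to $0^+$—any subsequence of a sequence tending to $0^+$ does—and, by construction, $\{z_{n_k}\}$ converges graphically. Hence the hypotheses of \cref{prop:incl} are satisfied, and its conclusion is exactly that the graphical limit is a $(T,\epsilon)$-truncated solution of the PDS~\eqref{eq:aw_pds}. This delivers the claimed subsequence and limit, completing the proof.

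There is honestly no substantive obstacle at this stage; the mild care points are purely logical rather than analytic. One must order the two invocations correctly—extract the convergent subsequence \emph{before} invoking the limit proposition, since \cref{prop:incl} takes graphical convergence as a hypothesis rather than producing it—and one must note that passing to a subsequence preserves both $K_{n_k}\to 0^+$ and the shared initial condition $z_0$, so that the upstream assumptions remain in force for the extracted subsequence.
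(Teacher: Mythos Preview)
Your proposal is correct and mirrors the paper's own treatment exactly: the paper states that the theorem follows ``immediately'' by combining \cref{lem:subseq_extr} (extraction of a graphically convergent subsequence) with \cref{prop:incl} (the limit is a $(T,\epsilon)$-truncated PDS solution), and does not write out a separate proof.
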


Under certain circumstances, it can be useful to know that, rather than a subsequence of gains $\{K_n\}$, any sequence $K_n \rightarrow 0^+$ will lead to a converging sequence of solutions. This is guaranteed if it is known that the \pds~\eqref{eq:aw_pds} has a unique solution:

\begin{corollary}\label{cor:uniq}
	Let \cref{ass:local} be satisfied for some $z_0 \in \calZ$. Given any $T>0$ (and $\epsilon > 0$ from \cref{ass:local}), assume that the PDS~\eqref{eq:aw_pds} admits a unique $(T, \epsilon)$-truncated solution $z$ with $z(0) = z_0$.
	Then, any sequence $\{z_n \}$ of $(T, \epsilon)$-truncated solutions of the AWA~\eqref{eq:sys_aw_gen} with $z_n(0)~=~z_0$ and $K = K_n$ with $K_n \rightarrow 0^+$ converges graphically to the (unique) $(T,\epsilon)$-truncated solution of the PDS~\eqref{eq:aw_pds}.
\end{corollary}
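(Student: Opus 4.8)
The plan is to upgrade the subsequential convergence already delivered by \cref{thm:loc} to full-sequence convergence by exploiting the uniqueness hypothesis, via the standard metric-space principle that a sequence converges to a point $z$ if and only if every subsequence admits a further subsequence converging to $z$.

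First I would make the setting metrizable. By construction, every $(T,\epsilon)$-truncated solution has graph contained in the fixed compact cylinder $\calX := [0,T] \times (z_0 + \epsilon \bbB)$. Restricted to closed subsets of a common compact set, graphical (Painlev\'e--Kuratowski) convergence coincides with convergence in the Hausdorff metric $d_H$ on the hyperspace of compact subsets (cf.~\cite[Chap.~5]{goebelHybridDynamicalSystems2012}), which is itself a compact metric space. Hence ``$z_n \to z$ graphically'' means $d_H(\gph z_n, \gph z) \to 0$, and the subsequence criterion above is legitimate.

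Second I would argue by contradiction. Suppose $\{z_n\}$ does not converge graphically to the unique PDS solution $z$. Then there exist $\varepsilon_0 > 0$ and a subsequence $\{z_{n_k}\}$ with $d_H(\gph z_{n_k}, \gph z) \geq \varepsilon_0$ for all $k$. This subsequence is itself a sequence of $(T,\epsilon)$-truncated solutions of the AWA~\eqref{eq:sys_aw_gen} with gains $K_{n_k} \to 0^+$ and $z_{n_k}(0) = z_0$, so \cref{thm:loc} (equivalently, \cref{lem:subseq_extr} followed by \cref{prop:incl}) produces a further subsequence $\{z_{n_{k_j}}\}$ converging graphically to some $(T,\epsilon)$-truncated solution $z'$ of the PDS~\eqref{eq:aw_pds} with $z'(0) = z_0$. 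The uniqueness hypothesis forces $z' = z$, whence $d_H(\gph z_{n_{k_j}}, \gph z) \to 0$, contradicting $d_H(\gph z_{n_k}, \gph z) \geq \varepsilon_0$. Therefore $z_n \to z$ graphically, as claimed.

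I expect the only delicate point to be the first step: justifying that graphical convergence of these truncated solutions is metrizable, so that the ``subsequence of every subsequence'' characterization applies. Everything after that is bookkeeping plus a direct invocation of \cref{thm:loc} and uniqueness. The compactness of the common cylinder $\calX$ -- built into the definition of a truncated solution -- is exactly what makes this work. I would also remark that the limit $z'$ inherits the initial condition $z_0$ because the point $(0, z_0)$ lies in every $\gph z_{n_{k_j}}$ and is preserved under graphical convergence, so the uniqueness hypothesis is indeed applicable.
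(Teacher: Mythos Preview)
Your proposal is correct and follows essentially the same route as the paper: argue by contradiction, pass to a subsequence bounded away from $z$ in Hausdorff distance, extract a graphically convergent further subsequence via \cref{lem:subseq_extr}/\cref{prop:incl} (equivalently \cref{thm:loc}), and contradict uniqueness. Your extra care about metrizability on the compact cylinder and preservation of the initial condition under graphical convergence only makes the argument cleaner.
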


\begin{proof}
	Assume, for the sake of contradiction, that $\{z_n\}$ does not converge to the unique solution $z$ of~\eqref{eq:aw_pds}. This implies that there exists $\nu > 0$ and a subsequence $\{z_m\}$ of $\{z_n\}$ such that $d_\infty(\gph z_m, \gph z) \geq \nu$ for all $m$ where $d_\infty$ denotes the Hausdorff distance between two sets. (In particular, since $z$ is a truncated solution $\gph z$ is compact and thus graphical convergence is equivalent to convergence with respect to $d_\infty$~\cite[Ex.~4.13]{rockafellarVariationalAnalysis2009}.) However, by \cref{lem:subseq_extr}, the sequence $\{z_m\}$ has a convergent subsequence that converges to some limit $\tilde{z}$. By \cref{prop:incl}, $\tilde{z}$ is a solution of~\eqref{eq:aw_pds}, but we also have $\| \tilde{z} - z \|_\infty \geq \varepsilon$ which contradicts the uniqueness of $z$.
\end{proof}

Finally, we can state the following \emph{ready-to-use} result about uniform convergence in the case when the existence of unique complete solutions is guaranteed:

\begin{corollary}
	Consider the AWA~\eqref{eq:sys_aw_gen}, let $\calZ$ be prox-regular, $f$ globally Lipschitz, and there exist $\mu, \nu > 0$ such that $\mu \bbI \preceq G^{-1}(z) \preceq \nu \bbI $ for all $z \in \bbR^n$.
	Given $z_0 \in \calZ$ and a sequence $K_n \rightarrow 0^+$, every sequence of complete solutions $z_n$ of the AWA~\eqref{eq:sys_aw_gen} with initial condition $z_0$ and $K = K_n$ converges uniformly to the unique complete solution of the PDS~\eqref{eq:aw_pds} on every compact interval $[0, T]$.
\end{corollary}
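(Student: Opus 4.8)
The plan is to reduce this global, finite-horizon statement to the local truncated-solution machinery of \cref{cor:uniq} together with the graphical-to-uniform upgrade recorded in \cref{rem:graph_unif_cvg}. First I would pin down the limit object. The bounds $\mu \bbI \preceq G^{-1}(z) \preceq \nu \bbI$ give $\lambda^{\max}_{G(z)}/\lambda^{\min}_{G(z)} \leq \nu/\mu$, so the metric has uniformly bounded condition number; moreover, prox-regularity of $\calZ$ makes $P_\calZ$, and hence $\hat f(z) = f(z, P_\calZ(z))$, locally Lipschitz on $\calZ^\circ_\alpha$ via \cref{prop:prox}, while $f$ is globally Lipschitz by hypothesis. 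These properties place us in the setting of \cref{thm:pds_exist} (the condition-number bound supplying completeness, prox-regularity together with the Lipschitz data supplying uniqueness), so the PDS~\eqref{eq:aw_pds} admits a unique complete solution $\zstar:[0,\infty) \rightarrow \calZ$ with $\zstar(0) = z_0$. Completeness of the approximating solutions $z_n$ is assumed in the statement, so no separate existence argument is needed for them.

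Next, fix $T > 0$. Since $\zstar|_{[0,T]}$ is continuous its image is compact, and I would choose $\epsilon > 0$ large enough that $\zstar([0,T]) \subset z_0 + \tfrac{\epsilon}{2}\bbB$, i.e.\ strictly inside $z_0 + \epsilon \bbB$. On this bounded ball \cref{ass:local} is satisfied: $\alpha$ comes from prox-regularity, $\mu,\nu$ are the global bounds, and a bound $M$ exists because the globally Lipschitz $f$ is bounded on the compact set $(z_0 + \epsilon \bbB) \cap \calZ^\circ_\alpha$. Because $\zstar$ never meets $\partial(z_0 + \epsilon \bbB)$ on $[0,T]$, uniqueness of the complete PDS solution forces the unique $(T,\epsilon)$-truncated PDS solution with initial value $z_0$ to be exactly $\zstar|_{[0,T]}$, which has full domain $[0,T]$. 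Restricting each complete AWA solution $z_n$ to the largest initial interval on which it remains in $z_0 + \epsilon\bbB$ yields a $(T,\epsilon)$-truncated solution $\tilde z_n:[0,T_n'] \rightarrow z_0 + \epsilon\bbB$ of~\eqref{eq:sys_aw_gen}, and \cref{cor:uniq} then gives that $\tilde z_n \rightarrow \zstar|_{[0,T]}$ graphically.

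The remaining, and I expect hardest, step is to upgrade graphical convergence to uniform convergence on all of $[0,T]$; by \cref{rem:graph_unif_cvg} this amounts to showing $T_n' = T$ for all large $n$, so that the truncated domains eventually cover $[0,T]$ and $\tilde z_n$ coincides with $z_n|_{[0,T]}$. I would argue by contradiction: if $T_n' < T$ along a subsequence, then $\|\tilde z_n(T_n') - z_0\| = \epsilon$, and passing to a further subsequence with $T_n' \rightarrow T^\ast \in [0,T]$, graphical convergence to $\zstar|_{[0,T]}$ would yield $\|\zstar(T^\ast) - z_0\| = \epsilon$, contradicting $\zstar([0,T]) \subset z_0 + \tfrac{\epsilon}{2}\bbB$. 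Hence $T_n' = T$ eventually, $\tilde z_n = z_n|_{[0,T]}$, and \cref{rem:graph_unif_cvg} delivers uniform convergence of $z_n$ to $\zstar$ on $[0,T]$; since $T$ was arbitrary, this holds on every compact interval. The crux is precisely this conversion of the local, ball-centered truncated statements into a statement about complete solutions on a fixed horizon: everything hinges on choosing $\epsilon$ so the limiting trajectory stays strictly interior, and then ruling out premature boundary exit of the approximations.
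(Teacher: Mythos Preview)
Your proposal is correct and follows essentially the same route as the paper: invoke \cref{thm:pds_exist} for existence/uniqueness/completeness of the limiting PDS solution, pick $\epsilon$ so that this solution stays strictly inside $z_0+\epsilon\bbB$ on $[0,T]$, verify \cref{ass:local}, apply \cref{cor:uniq} (the paper equivalently uses \cref{thm:loc} plus the uniqueness argument of \cref{cor:uniq}), and finish with \cref{rem:graph_unif_cvg}. Your explicit contradiction argument showing $T_n'=T$ eventually spells out a step the paper leaves terse; the paper simply observes that the limit has domain $[0,T]$ with $\|z(T)-z_0\|<\epsilon$ and appeals to \cref{rem:graph_unif_cvg}, whereas you supply the missing link between graphical convergence and the domain condition $T_i\geq T$ needed there.
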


\begin{proof}
	Note that the assumptions on $\calZ$, $G$, and $f$ guarantee that for every initial condition~\eqref{eq:aw_pds} admits a unique complete (Carath\'eodory) solution (\cref{thm:pds_exist}).

	Hence, given any $T > 0$, let $z:[0,T] \rightarrow \calZ$ denote the unique solution of the \pds~\eqref{eq:aw_pds} and define $\epsilon > \sup_{t \in [0, T]} \| x(t) - x_0 \|$. Since $f$ is continuous and hence bounded over a compact set, \cref{ass:local} is satisfied with $\nu, \mu, \alpha$ and by choosing $M:= \max_{z \in z_0 + \epsilon \bbB} \|f(z, P_\calZ(z)) \|$.
	\cref{thm:loc} guarantees convergence of a subsequence to the $(T, \epsilon)$-truncated solution $z: [0, T'] \rightarrow \calX$ of~\eqref{eq:aw_pds}. Moreover, for the same reason as in \cref{cor:uniq} the sequence itself converges.

	Finally, by definition of $\epsilon$, we have that $z$ is defined on $[0, T']$ with $T' = T$ and $\|z(T) - z_0\| < \epsilon$ and, in this case, graphical convergence of $(T, \epsilon)$-truncated solutions implies their uniform convergence on $[0, T]$ (see \cref{rem:graph_unif_cvg}).
\end{proof}

\begin{remark}
	\cref{thm:loc} and its corollaries can be slightly generalized, albeit at the expense of additional technicalities. For instance, instead of considering a single initial condition $z_0 \in \calZ$, it is in general possible to consider a sequence of initial conditions (under some additional restrictions) that converges to $z_0$.
\end{remark}

\section{Semiglobal Practical Robust Stability}\label{sec:rob}

Since anti-windup approximations can be seen as perturbations of projected dynamical systems, we can establish semiglobal practical asymptotic stability in $K$ with the following simplified\footnote{We consider only global asymptotic stability, which allows us to use the distance function instead of more general indicator functions. Further, we limit ourselves to $\rho$ being a positive constant instead of a function. As noted in \cref{rem:pre_stab}, compactness and stability of $\calA$ guarantee the existence of complete solutions since finite-time escape is not possible.} lemma:

\begin{lemma}\cite[Lem.~7.20]{goebelHybridDynamicalSystems2012}\label{lem:pract_robust}
	Let the inclusion~\eqref{eq:gen_inclusion} be well-posed and let $\calA \subset \calX$ be a compact and asymptotically stable set for~\eqref{eq:gen_inclusion}, i.e., $d_\calA( x (t)) \leq \beta( d_\calA(x(0)), t)$ for all $t \geq 0$	holds for some $\beta \in \calKL$ and any (complete) solution $x$ of~\eqref{eq:gen_inclusion}. Then, for every $\rho > 0$, every compact $\mathcal{B} \subset \bbR^n$, and every $\zeta > 0$ there exists $\delta \in (0,1)$ such that every solution $x_\delta$ of the $\delta\rho$-perturbation of~\eqref{eq:gen_inclusion} starting in $\mathcal{B} \cap C_{\delta \rho}$ satisfies $d_\calA(x_\delta (t)) \leq \beta( d_\calA( x_\delta(0)), t) + \zeta$ for all $t \geq 0$.
\end{lemma}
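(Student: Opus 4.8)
The plan is to reconstruct the standard robustness proof for well-posed inclusions, which passes through a smooth converse Lyapunov function and a perturbation estimate for its decrease rate. First I would invoke the converse Lyapunov theorem available for well-posed inclusions with a compact, uniformly asymptotically stable set~\cite{goebelHybridDynamicalSystems2012}: since~\eqref{eq:gen_inclusion} is well-posed and $\calA$ is compact with the $\calKL$-estimate in the hypothesis, there exist a smooth $V:\bbR^n \rightarrow \bbR_{\geq 0}$ and $\alpha_1, \alpha_2 \in \calK_\infty$ such that $\alpha_1(d_\calA(x)) \leq V(x) \leq \alpha_2(d_\calA(x))$ for all $x \in \calC$, together with the strict decrease $\left\langle \nabla V(x), v \right\rangle \leq -V(x)$ for all $x \in \calC$ and all $v \in H(x)$ (the unit rate obtained after a $\calK_\infty$-reparametrization of $V$).

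The heart of the argument is to show that this decrease survives small perturbations on the region where the trajectories live. Fix $\rho, \calB, \zeta$ and set $r := \max_{x \in \calB} d_\calA(x)$ and $\calK := \{ x \, | \, V(x) \leq \alpha_2(r) + 1 \}$, a compact set by the lower bound $\alpha_1$. On $\calK + \bbB$ the gradient $\nabla V$ is uniformly continuous and bounded; combining this with the local boundedness and outer semicontinuity of $H$ and the definition $H_{\delta\rho}(x) = \cocl H((x + \delta\rho\bbB)\cap\calC) + \delta\rho\bbB$, I would show that for every $x \in \calK \cap \calC_{\delta\rho}$ and every $v \in H_{\delta\rho}(x)$,
\begin{align*}
	\left\langle \nabla V(x), v \right\rangle \leq -V(x) + \theta(\delta) \, ,
\end{align*}
where $\theta(\delta) \rightarrow 0^+$ as $\delta \rightarrow 0^+$, uniformly over $\calK$. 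Here convexity of the decrease condition handles the closed convex hull, uniform continuity of $\nabla V$ absorbs the spatial inflation by $\delta\rho\bbB$, and the additive $\delta\rho\bbB$ term contributes the bounded error $\|\nabla V\|\,\delta\rho$.

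Next, along any perturbed solution $x_\delta$ confined to $\calK$, the comparison lemma applied to $\dot V \leq -V + \theta(\delta)$ yields $V(x_\delta(t)) \leq e^{-t} V(x_\delta(0)) + \theta(\delta)$ for all $t$. Forward invariance of $\calK$ for small $\delta$ follows from the same inequality (once $V(x_\delta(0)) \leq \alpha_2(r)$ and $\theta(\delta) \leq 1$, $V$ cannot cross the level $\alpha_2(r)+1$), so the confinement used above is self-consistent. Unwinding the bound through $\alpha_1, \alpha_2$ gives a $\calKL$-type estimate
\begin{align*}
	d_\calA(x_\delta(t)) \leq \alpha_1^{-1}\!\left( e^{-t}\alpha_2(d_\calA(x_\delta(0))) + \theta(\delta) \right) \, ,
\end{align*}
i.e.\ a $\calKL$-function of the initial distance plus a $\theta(\delta)$-controlled offset. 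Choosing $\delta$ small enough that this offset and the mismatch between the Lyapunov estimate and the given $\beta$ are together at most $\zeta$ then delivers the claim.

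I expect the genuinely delicate points, and hence \emph{the main obstacle}, to be twofold: (i) obtaining the uniform-in-time confinement to a single compact $\calK$, which is what makes $\theta(\delta)$ and the Lyapunov bounds uniform over all $t \geq 0$ rather than only on finite horizons, and (ii) recovering the estimate with the same $\beta$ appearing in the hypothesis rather than the $\alpha_i$-derived $\calKL$ function. Point (i) is resolved by the self-consistent invariance argument above, and point (ii) is precisely where the $+\zeta$ slack is essential, since both estimates bound $d_\calA$ and their difference is uniformly small on $\calK$ for small $\delta$. An alternative, fully variational-analytic route avoids the converse Lyapunov function: assuming the conclusion fails along some $\delta_i \rightarrow 0^+$, one passes via \cref{lem:unif_cvg} to a graphical limit of the perturbed solutions, which is a solution of the nominal~\eqref{eq:gen_inclusion} violating $\beta$, a contradiction. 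This route uses the given $\beta$ directly but requires extra care to bound the violation times uniformly (the infinite-horizon tail), which is exactly the difficulty the Lyapunov approach sidesteps.
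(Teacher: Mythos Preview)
The paper does not prove this lemma; it is quoted verbatim from~\cite[Lem.~7.20]{goebelHybridDynamicalSystems2012} and used as a black box in the proof of \cref{thm:rob_stab}. So there is no ``paper's own proof'' to compare against, only the proof in the cited reference.

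On the merits of your proposal: the Lyapunov route you outline as the primary argument establishes semiglobal practical stability, but not with the \emph{same} $\beta$ that appears in the hypothesis. The comparison lemma gives you $d_\calA(x_\delta(t)) \leq \alpha_1^{-1}(e^{-t}\alpha_2(d_\calA(x_\delta(0))) + \theta(\delta))$, i.e., a $\calKL$-bound built from $\alpha_1,\alpha_2$, and there is no reason this should lie within $\zeta$ of the given $\beta$ uniformly in $(r,t)$. Your claim under point~(ii) that ``both estimates bound $d_\calA$ and their difference is uniformly small on $\calK$ for small $\delta$'' is the gap: both $\beta$ and the Lyapunov-derived $\calKL$-function are upper bounds for nominal solutions, but two upper bounds for the same quantity can differ arbitrarily. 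The converse Lyapunov construction does not in general return the $\beta$ you started from.

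Your ``alternative route'' via graphical convergence and contradiction is in fact the approach taken in~\cite{goebelHybridDynamicalSystems2012}, and it is the natural one precisely because it works directly with the given $\beta$. The infinite-horizon tail you flag as delicate is handled there by splitting time: for $t$ large enough that $\beta(r,t) < \zeta/2$ on the compact set of initial conditions, practical attraction to a $\zeta/2$-neighborhood of $\calA$ (which your Lyapunov estimate \emph{does} deliver) suffices; for the remaining compact time interval, the graphical-convergence contradiction argument applies. So the two ingredients you separate actually need to be combined: the Lyapunov bound for eventual confinement near $\calA$, and the sequential-compactness argument on finite horizons to recover the exact $\beta$.
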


Hence, using \cref{prop:perturbation}, we arrive at the following second main result:

\begin{theorem}\label{thm:rob_stab}
	Consider a \pds~\eqref{eq:aw_pds} where $\calC$ is Clarke regular, $f$ and $G$ are continuous, and for which the compact set $\calA \subset \calZ$ is globally asymptotically stable, i.e., there is $\beta \in \calKL$ such that for every solution $z$ it holds that
	\begin{align*}
		d_\calA( z (t)) \leq \beta( d_\calA(z(0)), t)  \quad \forall t \geq 0 \, .
	\end{align*}
	Then, for every $\zeta > 0$ and every compact $\calB \subset \calZ$ there exists $K^\star > 0$ such that for all $K \in (0, K^\star)$ every solution $z_K$ of the AWA~\eqref{eq:sys_aw_gen} with $z_K(0) \in \calB$ satisfies
	\begin{align*}
		d_\calA( z_K(t)) \leq \beta( d_\calA(z_K(0)), t)  + \zeta \qquad \forall t \geq 0\, .
	\end{align*}
\end{theorem}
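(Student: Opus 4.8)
The plan is to read the AWA~\eqref{eq:sys_aw_gen} as a $\sigma$-perturbation of the alternate-form \pds~\eqref{eq:aw_sys_altform} via \cref{prop:perturbation}, with a perturbation level $\sigma(K)\to 0$ as $K\to 0^+$, and then to feed this into the semiglobal practical robustness \cref{lem:pract_robust} applied to \eqref{eq:aw_sys_altform} (which shares solutions with the \pds~\eqref{eq:aw_pds} by \cref{prop:pds_alt_form}, so that $\calA$ is globally asymptotically stable for it with the same $\beta$). The difficulty is that \cref{prop:perturbation} and \cref{ass:local} are \emph{local}: they are built around a single $z_0$ with pointwise constants $M,\mu,\nu,\alpha,\epsilon$ and an individual modulus $\omega$. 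The core work is to upgrade these to \emph{uniform} constants valid on one compact region that contains every trajectory of interest, and to produce a single globally well-posed inclusion to which \cref{lem:pract_robust} can be applied.

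First I would confine the dynamics. Since $\calA$ is compact and globally asymptotically stable with $\beta\in\calKL$, and $\calB$ is compact, set $R:=\beta(\max_{z_0\in\calB} d_\calA(z_0),0)$; every \pds solution from $\calB$, and every perturbed solution that obeys the target $\beta+\zeta$ bound, stays in the compact set $\mathcal{S}:=\calA+(R+\zeta)\bbB$, and I would work on a slightly enlarged compact neighborhood $\mathcal{S}'\supset\mathcal{S}$ (the enlargement absorbs the $KM/\mu$ excursion of the AWA outside $\calZ$). On $\mathcal{S}'$, continuity of $f$ and $G$ together with (local) prox-regularity of $\calZ$ yield, by a finite-subcover argument, uniform $M,\mu,\nu,\alpha$ so that \cref{ass:local} holds at every $z_0\in\mathcal{S}'\cap\calZ$ with a common $\epsilon$; uniform continuity of $\hat f$ on $\mathcal{S}'$ gives a single $\omega\in\calK_\infty$ for~\eqref{eq:f_cont}, and hence a finite $\gamma$ as in~\eqref{eq:aw_sys_altform}. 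To make that inclusion globally well-posed despite $f,G$ being merely continuous, I would replace them outside $\mathcal{S}'$ by bounded extensions agreeing with $f,G$ on $\mathcal{S}'$ and chosen so that $\calA$ remains globally asymptotically stable for the extended \pds; this is possible since one has complete freedom outside the compact set $\mathcal{S}'$, and it alters no trajectory that stays in $\mathcal{S}$.

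Next I would quantify the perturbation and invoke the robustness lemma. Mirroring the factorization in the proof of \cref{prop:incl}, I use \cref{lem:kinf} to write $\omega(rs)\le\sigma_1(r)\sigma_2(s)$ and set $\rho:=\max\{M/\mu,\sigma_2(M/\mu)\}$, so that the $K$-dependent level $\sigma(K)=\max\{KM/\mu,\omega(KM/\mu)\}$ from \cref{prop:perturbation} is dominated by $\delta(K)\rho$ with $\delta(K):=\max\{K,\sigma_1(K)\}\to 0^+$. Applying \cref{lem:pract_robust} to the (extended) inclusion~\eqref{eq:aw_sys_altform} with this $\rho$, the compact set $\calB$, and tolerance $\zeta$ returns $\delta\in(0,1)$ such that every solution of the $\delta\rho$-perturbation starting in $\calB$ satisfies the desired bound. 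I then pick $K^\star>0$ so small that, for all $K\in(0,K^\star)$, both $K<\mu/(2\alpha M)$ (so that \cref{prop:perturbation} applies) and $\delta(K)\le\delta$ hold. For such $K$, \cref{prop:perturbation} makes every AWA solution $z_K$ with $z_K(0)\in\calB$ a solution of the $\sigma(K)$-perturbation, hence of the $\delta\rho$-perturbation, and the bound $d_\calA(z_K(t))\le\beta(d_\calA(z_K(0)),t)+\zeta$ follows.

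Two technical points remain. \cref{prop:perturbation} is stated for $(T,\epsilon)$-truncated solutions whereas \cref{lem:pract_robust} concerns complete solutions; I would establish completeness as in \cref{rem:pre_stab}, using that the bound forces $z_K$ into the compact sublevel set $\{z:\beta(d_\calA(z),0)\le\beta(R,0)+\zeta\}\subset\mathcal{S}'$, which precludes finite escape and lets truncated pieces be concatenated. The main obstacle is precisely the globalization in the second paragraph: passing from the pointwise constants of \cref{ass:local} to uniform ones on $\mathcal{S}'$, constructing the bounded extension that keeps $\calA$ globally asymptotically stable, and verifying that no trajectory of interest ever reaches the region where the extension differs from the original data, so that the single well-posed inclusion genuinely captures the AWA dynamics. (This argument also silently uses prox-regularity of $\calZ$ on $\mathcal{S}'$ through \cref{ass:local}, beyond the Clarke regularity listed in the hypotheses.)
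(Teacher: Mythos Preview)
Your proposal is correct and follows essentially the same route as the paper: confine to a compact set via the $\calKL$ bound, extract uniform constants so that \cref{ass:local} holds over the whole region, invoke \cref{prop:perturbation} to view AWA solutions as $\sigma$-perturbations of the alternate-form \pds, and finish with \cref{lem:pract_robust}. Two places where the paper is more economical: (i) it does not use the \cref{lem:kinf} factorization---it simply fixes an arbitrary $\rho>0$, obtains $\delta$ from \cref{lem:pract_robust}, and then picks $K^\star$ so that $\max\{KM/\mu,\omega(KM/\mu)\}\le\delta\rho$ directly (since $\omega(0)=0$ and $\omega$ is increasing); (ii) it does not build any bounded extension of $f,G$ outside the compact region. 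The second point is worth noting: the alternate-form inclusion~\eqref{eq:aw_sys_altform} with any finite $\gamma$ is already well-posed on all of $\calZ$, and by the converse direction of \cref{prop:pds_alt_form} every one of its solutions is a \pds solution, so $\calA$ is automatically (pre-)asymptotically stable for it and \cref{lem:pract_robust} applies without modification. Your extension construction is therefore unnecessary, which removes what you flagged as the main obstacle. Your observation that prox-regularity is used implicitly (via \cref{ass:local}) despite only Clarke regularity being stated is accurate.
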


\begin{proof}
	First, we establish that \cref{ass:local} holds for every $z_0 \in \calB$. Since $\calB$ is compact, let $\overline{\beta} := \max_{z \in \calB} \beta(d_\calA(z), 0)$. Since $\beta$ is strictly increasing and unbounded, and, since $\calA$ is compact, the set $\calV := \{ z \, | \, \beta(d_\calA(z), 0) \leq \overline{\beta}\}$ is compact. Hence, we can choose $\epsilon > 0$ such that $\calV \subset \calB + \epsilon \bbB$. It follows that any solution of
	~\eqref{eq:aw_pds} starting in $\calB$ remains in $\calB + \epsilon \bbB$. By continuity over the compact set $\calB + \epsilon \bbB$, we can further choose and $M, \mu, \nu > 0$ such that
	$\| f(z, P_\calZ(z)) \| \leq M$ and $\mu \bbI \preceq G^{-1}(z) \preceq \nu \bbI$ holds for all $z \in \calB + \epsilon \bbB$. Thus, \cref{ass:local} is satisfied for all $z_0 \in \calB$. Further, every (complete) solution of the PDS~\eqref{eq:aw_pds} starting in $\calB$ remains in $\calB + \epsilon \bbB$ and hence can be written in its alternate form~\eqref{eq:aw_sys_altform}.
	Next, fix any $\rho > 0$. \cref{lem:pract_robust} implies that for every $\zeta > 0$ and every compact $\calB \subset \calZ$ there exists $\delta \in (0,1)$ such that the $\delta \rho$-perturbation is $\zeta$-practically pre-asymptotically stable.
	Given such a $\delta$, we conclude that there exists $K^\star > 0$ that, for all $K' < K^\star$,
	$\max \{ K' \tfrac{M}{\mu}, \omega (K' \tfrac{M}{\mu} ) \} \leq \delta \rho$ since $\omega$ is strictly increasing and $\omega(0) = 0$.
	Thus, \cref{prop:perturbation} states that the solution of~\eqref{eq:sys_aw_gen} with $K = K'$ is a solution of the $\sigma$-perturbation of~\eqref{eq:aw_sys_altform} with $\sigma = \max \{ K' \tfrac{M}{\mu}, \omega(K' \tfrac{M}{\mu}) \}$. Moreover, it is also solution to any $\sigma'$-perturbation with $\sigma' \geq \sigma$ and, in particular, for $\sigma' = \delta \rho$.
\end{proof}

Since the asymptotic stability of $\calA$ can often be established with a smooth Lyapunov function (see \cite[Thm.~3.18]{goebelHybridDynamicalSystems2012}), we can also state the following corollary:

\begin{corollary}
	Consider the PDS~\eqref{eq:aw_pds} where $\calC$ is Clarke regular, $f$ and $G$ are continuous. Further, consider a compact set $\calA \subset \calZ$ for which there exists a  \emph{Lyapunov function}\footnote{Namely, $V : \bbR^n \rightarrow \bbR_{\geq 0}$ is a Lyapunov function for $\calA$ if it is differentiable everywhere on $\calZ$, there exist $\underline{\alpha}, \overline{\alpha} \in \calK_\infty$ such that $\underline{\alpha}(d_\calA(z)) \leq V(z) \leq\overline{\alpha}(d_\calA(z))$ for all $z \in \calZ$, and $\left\langle \nabla V(z), \Pi_\calZ^G [f](z) \right\rangle \leq - \alpha(z)$ for all $z \in \calZ$ where $\alpha: \bbR^n \rightarrow \bbR_{\geq 0}$  is continuous and positive definite with respect to $\calA$, i.e., $\alpha(z) > 0$ for all $z \notin \calA$ and $\alpha(z) = 0$ for all $z \in \calA$.}.
	Then, for every $\zeta > 0$ and every compact set $\calB \subset \bbR^n$, there exists $K^\star$ such that for all $K \in  (0, K^\star)$ every solution of~\eqref{eq:sys_aw_gen} converges to a subset of $\calA  + \zeta \bbB$.
\end{corollary}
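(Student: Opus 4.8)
The plan is to reduce this corollary to \cref{thm:rob_stab} by using the existence of the Lyapunov function $V$ to establish the global asymptotic stability hypothesis required there. The only gap between the hypotheses of this corollary and those of \cref{thm:rob_stab} is that here we are \emph{given} a Lyapunov function rather than a $\calKL$-bound on the distance to $\calA$. So the first step is to invoke a converse Lyapunov-type argument: from $\underline{\alpha}(d_\calA(z)) \leq V(z) \leq \overline{\alpha}(d_\calA(z))$ together with the decrease condition $\langle \nabla V(z), \Pi_\calZ^G[f](z)\rangle \leq -\alpha(z)$, standard results (precisely the cited~\cite[Thm.~3.18]{goebelHybridDynamicalSystems2012}) guarantee that $\calA$ is uniformly globally asymptotically stable for the PDS~\eqref{eq:aw_pds}, i.e., there exists $\beta \in \calKL$ with $d_\calA(z(t)) \leq \beta(d_\calA(z(0)), t)$ for every solution. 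This is the crux of the argument and the main obstacle, though here it is dispatched by citation; one must verify that the well-posedness of~\eqref{eq:ivp_alt} (established earlier) lets us apply the cited hybrid-systems theorem to the PDS in its alternate form.

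Having obtained such a $\beta$, the second step is essentially to quote \cref{thm:rob_stab}. Concretely, I would write: by the Lyapunov function and \cite[Thm.~3.18]{goebelHybridDynamicalSystems2012}, $\calA$ is globally asymptotically stable for the PDS~\eqref{eq:aw_pds} with some $\beta \in \calKL$. Then \cref{thm:rob_stab} applies verbatim: for the given $\zeta > 0$ and compact $\calB$, there is $K^\star > 0$ so that for all $K \in (0, K^\star)$ every solution $z_K$ of the AWA~\eqref{eq:sys_aw_gen} with $z_K(0) \in \calB$ satisfies $d_\calA(z_K(t)) \leq \beta(d_\calA(z_K(0)), t) + \zeta$ for all $t \geq 0$.

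The final step converts this semiglobal practical bound into the stated conclusion about convergence to a neighborhood of $\calA$. Since $\beta \in \calKL$, for each fixed initial distance $\lim_{t \to \infty} \beta(d_\calA(z_K(0)), t) = 0$, so taking the $\limsup$ as $t \to \infty$ in the bound $d_\calA(z_K(t)) \leq \beta(d_\calA(z_K(0)), t) + \zeta$ yields $\limsup_{t \to \infty} d_\calA(z_K(t)) \leq \zeta$. This is exactly the assertion that every solution converges to a subset of $\calA + \zeta \bbB$. One minor technical point to address is that the corollary allows $\calB \subset \bbR^n$ rather than $\calB \subset \calZ$; this is harmless because for $K$ small enough \cref{prop:exist} forces trajectories into $\calZ + \tfrac{KM}{\mu}\bbB$, and the practical bound absorbs this $O(K)$ discrepancy into $\zeta$ (or one simply replaces $\calB$ by $\calB \cap \calZ_{\delta\rho}$ as in \cref{lem:pract_robust}). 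I expect the Lyapunov-to-$\calKL$ conversion in the first step to be the only substantive part; everything afterward is a direct application of the preceding theorem and an elementary limit.
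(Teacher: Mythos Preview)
Your proposal is correct and matches the paper's intended argument: the paper does not give an explicit proof but states the corollary as an immediate consequence of \cref{thm:rob_stab} once the Lyapunov function yields asymptotic stability via \cite[Thm.~3.18]{goebelHybridDynamicalSystems2012}. Your additional remarks on extracting convergence to $\calA + \zeta\bbB$ from the $\calKL$-bound and on the discrepancy $\calB \subset \bbR^n$ versus $\calB \subset \calZ$ are reasonable elaborations of points the paper leaves implicit.
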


\section{Preservation of Equilibria \& Robust Convergence}\label{sec:rob_conv}

Finally, we consider the special case of~\eqref{eq:sys_aw_gen} when $f$ depends only on $P_\calZ(z)$, i.e., we study the system
\begin{align}\label{eq:sys_aw_gen_spec}
	\dot z \in F_K(z) := f(P_\calZ(z)) - \tfrac{1}{K} G^{-1}(P_\calZ(z)) ( z - P_\calZ(z))
\end{align}
where, as before, $\calZ$ is an $\alpha$-prox-regular set, $G: \calZ \rightarrow \bbS_+^n$ is a continuous metric, $K > 0$ is a scalar, and $f: \calZ \rightarrow \bbR^n$ is a continuous vector field.
All of the previous results for~\eqref{eq:sys_aw_gen} also apply to~\eqref{eq:sys_aw_gen_spec}. In particular, as $K \rightarrow 0^+$, trajectories of~\eqref{eq:sys_aw_gen_spec} converge uniformly to solutions of the \pds~\eqref{eq:aw_pds}. Also, the practical stability results of \cref{sec:rob} apply, but we show next that a stronger result can be derived for~\eqref{eq:sys_aw_gen_spec}.

In the following, $z^\star$ is a \emph{weak equilibrium} of~\eqref{eq:sys_aw_gen_spec} if the constant trajectory $z \equiv z^\star$ is a solution of~\eqref{eq:sys_aw_gen_spec}. Since we consider only Carath\'eodory solutions, $z^\star$ is a weak equilibrium of~\eqref{eq:sys_aw_gen_spec} if and only if $0 \in F_K(z^\star)$.

An important advantage of~\eqref{eq:sys_aw_gen_spec} over the more general system~\eqref{eq:sys_aw_gen} is that equilibria of~\eqref{eq:aw_pds} are preserved in the following sense (which generalizes~\cite[Prop.~4]{hauswirthImplementationProjectedDynamical2020}):

\begin{proposition}\label{prop:equil}
	If $\o{z}^\star \in \calZ$ is a weak equilibrium point of the PDS~\eqref{eq:aw_pds}, then there exists $K^\star > 0$ such that for all $K \in (0, K^\star)$ there exists a weak equilibrium point ${z}_K^\star \in \o{z}^\star + N_{\o{z}^\star} \calZ \cap \tfrac{1}{2\alpha} \interior \bbB$ for the AWA~\eqref{eq:sys_aw_gen_spec}.
	Conversely, if ${z}^\star_K \in \calZ^\circ_\alpha$ is a weak equilibrium of~\eqref{eq:sys_aw_gen_spec} for some $K$, then $P_\calZ(z^\star)$ is a weak equilibrium of~\eqref{eq:aw_pds}.
\end{proposition}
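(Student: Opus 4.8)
The plan is to characterize the weak equilibria of both systems through \cref{lem:pds_decomp} and the cone equivalence~\eqref{eq:norm_cone_equiv}, and then to construct the AWA equilibrium explicitly. For the PDS~\eqref{eq:aw_pds} (where $P_\calZ(z) = z$ on $\calZ$, so that $\hat f = f$ on $\calZ$), \cref{lem:pds_decomp} shows that $\o{z}^\star$ is a weak equilibrium, i.e.\ $\Pi^G_\calZ[f](\o{z}^\star) = 0$, if and only if $f(\o{z}^\star) \in N^G_{\o{z}^\star}\calZ$ (take $\eta = f(\o{z}^\star)$, so that $f - \eta = 0$ and the orthogonality condition of \cref{lem:pds_decomp} holds trivially). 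For the AWA~\eqref{eq:sys_aw_gen_spec}, $z_K^\star$ is a weak equilibrium iff $0 \in F_K(z_K^\star)$, i.e.\ $f(P_\calZ(z_K^\star)) = \tfrac{1}{K} G^{-1}(P_\calZ(z_K^\star))(z_K^\star - P_\calZ(z_K^\star))$.

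For the forward direction, I would propose the candidate $z_K^\star := \o{z}^\star + v$ with $v := K\, G(\o{z}^\star) f(\o{z}^\star)$. First, since $f(\o{z}^\star) \in N^G_{\o{z}^\star}\calZ$, the equivalence~\eqref{eq:norm_cone_equiv} gives $G(\o{z}^\star) f(\o{z}^\star) \in N_{\o{z}^\star}\calZ$, and because $N_{\o{z}^\star}\calZ$ is a cone, $v \in N_{\o{z}^\star}\calZ$. Next, since $\|v\| = K \|G(\o{z}^\star) f(\o{z}^\star)\|$, choosing $K^\star := \tfrac{1}{2\alpha}\|G(\o{z}^\star) f(\o{z}^\star)\|^{-1}$ (or an arbitrary $K^\star > 0$ in the degenerate case $G(\o{z}^\star)f(\o{z}^\star) = 0$, where $\o{z}^\star$ is already an AWA equilibrium for every $K$) ensures $v \in N_{\o{z}^\star}\calZ \cap \tfrac{1}{2\alpha}\interior\bbB$ for all $K \in (0, K^\star)$. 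The second statement of \cref{prop:prox} then yields $P_\calZ(\o{z}^\star + v) = \o{z}^\star$, so that $z_K^\star - P_\calZ(z_K^\star) = v$, and substituting into $F_K$ gives $f(\o{z}^\star) - \tfrac{1}{K} G^{-1}(\o{z}^\star)\, K\, G(\o{z}^\star) f(\o{z}^\star) = 0$, confirming that $z_K^\star$ is a weak equilibrium of~\eqref{eq:sys_aw_gen_spec}.

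For the converse, I would set $\o{z} := P_\calZ(z_K^\star)$, which is single-valued because $z_K^\star \in \calZ^\circ_\alpha$ (\cref{prop:prox}). The equilibrium identity then reads $f(\o{z}) = \tfrac{1}{K} G^{-1}(\o{z})(z_K^\star - \o{z})$. Since $z_K^\star - \o{z} \in N_{\o{z}}\calZ$ (as $z - P_\calZ(z) \in N_{P_\calZ(z)}\calZ$, see~\cite[Ex.~6.16]{rockafellarVariationalAnalysis2009}), applying~\eqref{eq:norm_cone_equiv} and using that $N^G_{\o{z}}\calZ$ is a cone gives $f(\o{z}) \in N^G_{\o{z}}\calZ$; by \cref{lem:pds_decomp} this means $\Pi^G_\calZ[f](\o{z}) = 0$, i.e.\ $\o{z}$ is a weak equilibrium of the PDS. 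The main obstacle is the apparent circularity in the forward direction: the candidate $z_K^\star$ is defined through $\o{z}^\star$, yet the equilibrium equation for $F_K$ collapses only if the projection of $z_K^\star$ returns $\o{z}^\star$ exactly. This is precisely what the second part of \cref{prop:prox} resolves, which is why the bound $\|v\| < \tfrac{1}{2\alpha}$ and the normal-cone membership of $v$ must both be secured before substituting.
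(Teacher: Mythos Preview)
Your proof is correct and follows essentially the same route as the paper: explicit construction of $z_K^\star = \o{z}^\star + K\,G(\o{z}^\star)f(\o{z}^\star)$, verification via~\eqref{eq:norm_cone_equiv} and \cref{prop:prox} that $P_\calZ(z_K^\star) = \o{z}^\star$, and direct substitution into $F_K$; the converse likewise matches. Your sign choice ($+K$, with $f(\o{z}^\star)\in N^G_{\o{z}^\star}\calZ$ from \cref{lem:pds_decomp}) is the correct one---the paper writes $z_K^\star = \o{z}^\star - K\,G(\o{z}^\star)f(\o{z}^\star)$ and $f(\o{z}^\star)\in -N^G_{\o{z}^\star}\calZ$, which is a sign slip that makes its final line evaluate to $2f(\o{z}^\star)$ rather than~$0$.
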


\begin{proof}
	Given a weak equilibrium $\o{z}^\star \in \calZ$ of~\eqref{eq:aw_pds}, let $z^\star_K := \o{z}^\star - K G(\o{z}^\star)f(\o{z}^\star)$. For $K \in (0, K^\star) := 1/ (2\alpha \| G(\o{z}^\star) f(\o{z}^\star)\|)$, we have $z^\star_K  \in \calZ^\circ_\alpha$.

	Since $\o{z}^\star$ is an equilibrium of~\eqref{eq:aw_pds} (by assumption) and using \cref{lem:pds_decomp}, we have $f(\o{z}^\star) \in - N^G_{\o{z}^\star} \calZ$. It follows from~\eqref{eq:norm_cone_equiv} that $-K G(\o{z}^\star) f(\o{z}^\star) \in N_{\o{z}^\star} \calZ$ and consequently $z^\star_K \in \o{z}^\star + N_{\o{z}^\star} \calZ$.
	By \cref{prop:prox}, it follows that $P_\calZ(z_K^\star) = \o{z}^\star$ and therefore
	\begin{align*}
		F_K(z^\star_K) = f (\o{z}^\star) - \tfrac{1}{K}G^{-1}(\o{z}^\star) \left(\o{z}^\star - K G(\o{z}^\star)f(\o{z}^\star) - \o{z}^\star \right) = 0 \, .
	\end{align*}
	Thus, $z^\star_K$ is a weak equilibrium of~\eqref{eq:sys_aw_gen_spec}. The converse case follows the same ideas.
\end{proof}

Although equilibria of the PDS~\eqref{eq:aw_pds} are preserved by the AWA~\eqref{eq:sys_aw_gen_spec} (after projection), it is not clear whether convergence properties are preserved, especially since we are primarily interested in the convergence of $t \mapsto P_\calZ(z(t))$ rather than the convergence of the solution $z$ of~\eqref{eq:sys_aw_gen_spec}. \cref{thm:rob_stab} suggests that, in general, convergence is only within a neighborhood of asymptotically stable equilibria of the PDS~\eqref{eq:aw_pds}.

However, as we shown below, under additional conditions on $f, G$ and $\calZ$, the projected solutions $t \mapsto P_\calZ(z(t))$ do indeed converge to an equilibrium of~\eqref{eq:aw_pds}.

\subsection{Anti-Windup Approximations of Monotone Dynamics}
Next, we show that if $-f$ is monotone and $G \equiv \bbI$, then $F_K$, as defined in~\eqref{eq:sys_aw_gen_spec}, is monotone for small enough $K$. This, in turn, allows us conclude asymptotic stability of~\eqref{eq:sys_aw_gen_spec}.

Since we require only monotonicity of $f$, the following results can be used not only when $f$ is chosen as the gradient of a convex cost function, but also for saddle-point flows (see \cref{subsec:saddle}), and pseudo-gradients for Nash-equilibrium seeking~\cite{nagurneyProjectedDynamicalSystems1996,depersisDistributedaveragingintegral2019}.

Given a set $\calC \subset \bbR^n$, recall that a map $F: \calC \rightrightarrows \bbR^n$ is (strictly; $\beta$-strongly) \emph{monotone} if for all $x, x' \in \calC$ and all $v \in F(x)$ and $v' \in F(x')$ it holds that
\begin{align*}
	\left\langle v - v', x - x' \right\rangle \geq 0 \, (> 0; \geq \beta \| x- x' \|^2) \ .
\end{align*}
Further, if $\calC$ is $\alpha$-prox-regular, the map $x \mapsto N_x \calC$ has a \emph{hypomonotone localization}~\cite[Ex.~13.38]{rockafellarVariationalAnalysis2009}, i.e., for all $x, x' \in \calC$, all $\eta \in N_x \calC \cap \bbB$, and all $\eta' \in N_{x'} \calC \cap \bbB$ we have
\begin{align*}
	\left\langle \eta - \eta', x- x' \right\rangle \geq - 2 \alpha \| x - x' \|^2 \, .
\end{align*}
In particular, if $\calC$ is convex, we have $\left\langle \eta' - \eta, x'- x \right\rangle \geq 0$ and $x \mapsto N_x \calC$ is monotone.

\begin{proposition}\label{prop:monot}
	Consider $F_K$ as defined in~\eqref{eq:sys_aw_gen_spec} with $G \equiv \bbI$ and $\calC$ is assumed to be $\alpha$-prox-regular. Let $-f$ be $\beta$-strongly monotone and globally $L$-Lipschitz. Then $-F_K$ is strictly monotone on $\calZ^\circ_\alpha$ for all $0< K < 4 (\beta - 2 \alpha)/ L^2$.
\end{proposition}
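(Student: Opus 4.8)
The plan is to verify the defining inequality of monotonicity for $-F_K$ head-on. Fix distinct $z, z' \in \calZ^\circ_\alpha$ and set $p := P_\calZ(z)$ and $p' := P_\calZ(z')$, which are single-valued by \cref{prop:prox}, together with the normal offsets $\eta := z - p$ and $\eta' := z' - p'$. By \cite[Ex.~6.16]{rockafellarVariationalAnalysis2009} we have $\eta \in N_p\calZ$ and $\eta' \in N_{p'}\calZ$, and since $z, z' \in \calZ^\circ_\alpha$ the bounds $\|\eta\| = d_\calZ(z) < \tfrac{1}{2\alpha}$ and $\|\eta'\| = d_\calZ(z') < \tfrac{1}{2\alpha}$ hold. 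Using $G \equiv \bbI$ in \eqref{eq:sys_aw_gen_spec},
\begin{align*}
	-F_K(z) + F_K(z') = -\big(f(p) - f(p')\big) + \tfrac{1}{K}\big(\eta - \eta'\big)\,,
\end{align*}
and I would pair this against the decomposition $z - z' = (p - p') + (\eta - \eta')$.

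Expanding $\langle -F_K(z) + F_K(z'),\, z - z'\rangle$ yields four terms. The first, $\langle -(f(p)-f(p')),\, p - p'\rangle$, is bounded below by $\beta\|p-p'\|^2$ by $\beta$-strong monotonicity of $-f$. The second, $\langle -(f(p)-f(p')),\, \eta-\eta'\rangle$, is bounded below by $-L\|p-p'\|\,\|\eta-\eta'\|$ via $L$-Lipschitzness and Cauchy--Schwarz. The fourth, $\tfrac{1}{K}\|\eta-\eta'\|^2$, is nonnegative and of exactly the scale needed. Writing $a := \|p - p'\|$ and $b := \|\eta - \eta'\|$, the aim is to assemble
\begin{align*}
	\langle -F_K(z) + F_K(z'),\, z - z'\rangle \;\geq\; (\beta - 2\alpha)\,a^2 - L\,a\,b + \tfrac{1}{K}\,b^2\,,
\end{align*}
and then to invoke positive definiteness of this $2\times2$ quadratic form in $(a,b)$: it is strictly positive for $(a,b)\neq(0,0)$ precisely when its discriminant is negative, i.e.\ $L^2 < 4(\beta-2\alpha)\tfrac{1}{K}$, which rearranges to the stated bound $K < 4(\beta - 2\alpha)/L^2$. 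Since $a=b=0$ forces $z=z'$, strict monotonicity follows, and the final assembly and discriminant check are routine.

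The hard part will be the third term, $\tfrac{1}{K}\langle \eta - \eta',\, p - p'\rangle$, which carries essentially all of the geometric content and is the origin of the $-2\alpha$ correction; I would treat it first and in isolation. Here prox-regularity must be used: $\eta$ and $\eta'$ are $\alpha$-proximal normals, so the $\alpha$-proximal inequality (equivalently, the hypomonotone localization of $z \mapsto N_z\calZ$ recorded after \eqref{eq:sys_aw_gen_spec}, cf.\ \cite[Ex.~13.38]{rockafellarVariationalAnalysis2009}) supplies a lower bound of the form $\langle \eta - \eta',\, p - p'\rangle \geq -\alpha(\|\eta\|+\|\eta'\|)\,a^2$. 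The delicate point is to convert this, after multiplication by $\tfrac{1}{K}$, into a clean $-2\alpha\,a^2$; this is exactly where the size bound $\|\eta\|,\|\eta'\| < \tfrac{1}{2\alpha}$ must be combined with the $\tfrac{1}{K}$ factor, and it is the only step where non-convexity genuinely complicates matters. (For convex $\calZ$ the normal-cone map is monotone, this term is simply nonnegative, and one recovers the sharper threshold $K < 4\beta/L^2$ in the limit $\alpha \to 0^+$, which is a useful consistency check on the constant.) Once this term is shown to dominate $-2\alpha\,a^2$, the proposition closes by the quadratic-form argument above.
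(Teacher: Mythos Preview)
Your proposal mirrors the paper's proof exactly: the same decomposition $z - z' = (p - p') + (\eta - \eta')$, the same four-term expansion bounded via strong monotonicity of $-f$, Lipschitz continuity with Cauchy--Schwarz, and hypomonotonicity of the normal-cone map, followed by the identical $2\times2$ discriminant condition $\tfrac{1}{K}(\beta - 2\alpha) - \tfrac{1}{4}L^2 > 0$. The paper treats your flagged ``delicate'' third term by directly invoking the hypomonotone localization $\langle \eta - \eta',\, p - p'\rangle \geq -2\alpha\,\|p - p'\|^2$ without further discussion of the $\tfrac{1}{K}$ prefactor.
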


\begin{proof}
	Given any $z, z' \in \calZ^\circ_\alpha$, let $\o{z} := P_\calZ(z)$ and $\o{z}' := P_\calZ(z')$. Further, let $\eta := z - \o{z} \in N_{\o{z}} \calZ$ and $\eta' := z' - \o{z}' \in N_{\o{z}'} \calZ$. We can work directly with the monotonicity of $f$, the hypomonotocity of $z \mapsto N_z \calZ$, and Cauchy-Schwarz to derive
	\begin{align*}
		\left\langle z - z', F_K(z) - F_K(z') \right\rangle
		 & = \left\langle z - z', f(\o{z}) - f(\o{z}') - \tfrac{1}{K}(z - \o{z}) + \tfrac{1}{K}(z' - \o{z}')) \right\rangle                          \\
		 & = \left\langle \o{z} - \o{z}' + \eta - \eta', f(\o{z}) - f(\o{z}') - \tfrac{1}{K} (\eta - \eta') \right\rangle                            \\
		 & = \left\langle \o{z} - \o{z}', f(\o{z}) - f(\o{z}') \right\rangle   - \tfrac{1}{K} \left\langle  \eta - \eta', \eta - \eta' \right\rangle
		\\
		 & \qquad \qquad
		+ \underbrace{\left\langle \eta - \eta', f(\o{z}) - f(\o{z}') \right\rangle}_{L \| \eta - \eta'\| \| \o{z} - \o{z}' \|}
		- \tfrac{1}{K} \underbrace{\left\langle \o{z} - \o{z}', \eta - \eta' \right\rangle}_{\geq - 2 \alpha  \| \o{z} - \o{z}' \|^2}                \\
		 & \leq  - (\beta - 2 \alpha) \| \o{z} - \o{z}' \|^2 + L \| \o{z} - \o{z}' \| \| \eta - \eta' \|  - \tfrac{1}{K} \| \eta - \eta' \|^2 \, .
	\end{align*}

	A sufficient condition for the righthand side to be negative for all $\o{z} \neq \o{z}'$ is that $\beta - 2\alpha > 0$ and that the determinant $\tfrac{1}{K}(\beta  - 2\alpha) - \tfrac{1}{4}L^2$ is positive, i.e., if $0< K < 4(\beta - 2 \alpha)/L^2$.
\end{proof}

This leads us to our third theoretical result which establishes convergence of anti-windup approximations for strongly monotone dynamics on convex sets:
\begin{theorem}\label{thm:monot}
	Consider the AWA~\eqref{eq:sys_aw_gen_spec} with $G \equiv \bbI$ and let $\calC$ be closed convex. Assume that $-f$ is $\beta$-strongly monotone and globally $L$-Lipschitz. Then, for all $K < 4 \beta / L^2$, every trajectory of~\eqref{eq:sys_aw_gen_spec} converges to an equilibrium point $z^\star$ (which is unique) such that $P_\calZ(z^\star)$ is the unique equilibrium of the \pds~\eqref{eq:aw_pds}.
\end{theorem}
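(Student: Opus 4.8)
The plan is to exploit that convexity of $\calZ$ makes $P_\calZ$ globally single-valued and nonexpansive, so that the AWA~\eqref{eq:sys_aw_gen_spec} reduces to a genuine ODE $\dot z = F_K(z)$ with $F_K$ globally Lipschitz (indeed $f \circ P_\calZ$ is $L$-Lipschitz and $z \mapsto z - P_\calZ(z)$ is $2$-Lipschitz, so $F_K$ is $(L + \tfrac{2}{K})$-Lipschitz). Global Lipschitzness already guarantees a unique, complete solution from every initial condition, so finite escape is not an issue. The strategy is then to show that $-F_K$ is \emph{strongly} monotone on all of $\bbR^n$, deduce the existence of a unique equilibrium $z^\star$, and close the argument with the quadratic Lyapunov function $V(z) = \tfrac12 \|z - z^\star\|^2$.

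First I would upgrade \cref{prop:monot} to the convex setting. A convex set is $\alpha$-prox-regular for every $\alpha > 0$ and, moreover, $z \mapsto N_z \calZ$ is globally monotone, so the hypomonotonicity estimate $\langle \o{z} - \o{z}', \eta - \eta'\rangle \geq -2\alpha\|\o{z} - \o{z}'\|^2$ used in the proof of \cref{prop:monot} may be replaced by $\langle \o{z} - \o{z}', \eta - \eta'\rangle \geq 0$, i.e.\ the effective value $\alpha = 0$. Repeating that computation verbatim yields, for all $z, z' \in \bbR^n$ (with $\o{z} := P_\calZ(z)$, $\eta := z - \o{z}$, and analogously for $z'$),
\begin{align*}
	\left\langle z - z', F_K(z) - F_K(z') \right\rangle \leq -\beta \|\o{z} - \o{z}'\|^2 + L \|\o{z} - \o{z}'\| \|\eta - \eta'\| - \tfrac{1}{K}\|\eta - \eta'\|^2 \, .
\end{align*}
For $K < 4\beta / L^2$ the right-hand side is a negative definite quadratic form in $(\|\o{z} - \o{z}'\|, \|\eta - \eta'\|)$, hence bounded above by $-c(\|\o{z} - \o{z}'\|^2 + \|\eta - \eta'\|^2)$ for some $c > 0$; combined with $\|z - z'\|^2 \leq 2(\|\o{z} - \o{z}'\|^2 + \|\eta - \eta'\|^2)$ this shows $-F_K$ is $m$-strongly monotone on $\bbR^n$ with $m := c/2 > 0$.

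Next I would extract the equilibrium. A continuous strongly monotone map on $\bbR^n$ is a homeomorphism (a standard fact of finite-dimensional monotone operator theory), so $F_K$ has a zero $z^\star$, which is unique because two distinct zeros would force $\langle z^\star - \tilde z^\star, F_K(z^\star) - F_K(\tilde z^\star)\rangle = 0$, contradicting strict monotonicity. By the converse part of \cref{prop:equil} (applicable since a convex set is $\alpha$-prox-regular for every $\alpha > 0$, so any point lies in some $\calZ^\circ_\alpha$), $P_\calZ(z^\star)$ is an equilibrium of the PDS~\eqref{eq:aw_pds}; it is the unique one, since distinct PDS equilibria $\o{z}_1 \neq \o{z}_2$ would give distinct AWA equilibria $\o{z}_i - K f(\o{z}_i)$ (each projecting back to $\o{z}_i$), contradicting uniqueness of $z^\star$.

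Finally, convergence follows from $V$. Along any solution, using $F_K(z^\star) = 0$ and $m$-strong monotonicity of $-F_K$,
\begin{align*}
	\dot V = \left\langle z - z^\star, F_K(z) - F_K(z^\star) \right\rangle \leq -m \|z - z^\star\|^2 = -2 m V \, ,
\end{align*}
so $V(z(t)) \leq V(z(0)) e^{-2mt} \to 0$ and every trajectory converges (exponentially) to $z^\star$. I expect the main obstacle to be the careful passage to the convex case in \cref{prop:monot}: one must justify both that the threshold $4(\beta - 2\alpha)/L^2$ sharpens to $4\beta/L^2$ via monotonicity of the normal-cone map, and that strict monotonicity can be promoted to a \emph{uniform} strong-monotonicity constant $m$ valid on all of $\bbR^n$ (rather than only on $\calZ^\circ_\alpha$ for a fixed $\alpha$). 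Everything downstream---existence and uniqueness of the equilibrium and the Lyapunov decrease---is then routine.
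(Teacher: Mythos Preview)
Your proposal is correct and shares the paper's core strategy: convexity of $\calZ$ makes $P_\calZ$ globally single-valued and $1$-Lipschitz, so the AWA is a globally Lipschitz ODE with unique complete solutions; \cref{prop:monot} with $\alpha \to 0$ gives monotonicity of $-F_K$ for $K < 4\beta/L^2$; and $V(z)=\tfrac12\|z-z^\star\|^2$ is the Lyapunov function. Two differences are worth flagging. First, you sharpen the paper's \emph{strict} monotonicity to \emph{strong} monotonicity by observing that the quadratic form in $(\|\o z - \o z'\|,\|\eta-\eta'\|)$ is negative definite and then using $\|z-z'\|^2 \le 2(\|\o z-\o z'\|^2+\|\eta-\eta'\|^2)$; this yields exponential convergence, which is slightly stronger than the asymptotic stability the paper states. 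Second, the equilibria are obtained in the opposite order: the paper first cites an external result (\cite[Thm.~2.3]{nagurneyProjectedDynamicalSystems1996}) for existence and uniqueness of the PDS equilibrium and then applies the forward direction of \cref{prop:equil} to construct the AWA equilibrium, whereas you obtain the AWA equilibrium directly from strong monotonicity (surjectivity of continuous strongly monotone maps on $\bbR^n$) and then deduce the PDS equilibrium via the converse direction of \cref{prop:equil}. Your route is more self-contained and avoids the external citation; the paper's is marginally shorter since it does not need to promote strictness to strongness. (A harmless slip: the AWA equilibrium associated with a PDS equilibrium $\o z$ is $\o z + Kf(\o z)$, not $\o z - Kf(\o z)$; your argument only needs that distinct $\o z_i$ yield distinct $z_i$, which follows from $P_\calZ(z_i)=\o z_i$, so this does not affect correctness.)
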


\begin{proof}

	Because of convexity of $\calZ$, $P_\calZ(z)$ is single-valued and continuous for all $z \in \bbR^n$ and globally 1-Lipschitz (i.e., non-expansive). As a consequence, $F_K$ is globally Lipschitz continuous and there exists a unique complete solution of~\eqref{eq:sys_aw_gen_spec} for every initial condition $z(0) \in \bbR^n$. Furthermore, since $K < 4 \beta / L^2$ and $\calZ$ is convex (which lets us take $\alpha \rightarrow 0^+$), \cref{prop:monot} guarantees that $F_K$ is strictly monotone on $\bbR^n$.

	Next, recall that the strong monotonicity of $-f$ and convexity of $\calZ$ imply that \eqref{eq:aw_pds} has a unique equilibrium $\o{z}^\star$~\cite[Thm.~2.3]{nagurneyProjectedDynamicalSystems1996}. Consequently, \cref{prop:equil} guarantees the existence of an equilibrium point $z^\star$ of~\eqref{eq:sys_aw_gen_spec} such that $P_\calZ(z^\star) =  \o{z}^\star$. Furthermore, $z^\star$ is unique by~\cite[Thm.~2.2]{nagurneyProjectedDynamicalSystems1996}.  In particular, strict monotonicity of $F_K$ implies that $V(z):= \tfrac{1}{2} \| z - z^\star \|^2$ is a Lyapunov function for~\eqref{eq:sys_aw_gen_spec} which can be used to establish global asymptotic stability of~$z^\star$.
\end{proof}

\cref{thm:monot} can, presumably, be generalized to prox-regular sets as well as general metrics $G$. However, in that case, additional restriction on $z(0)$ are required, the threshold value for $K$ is less easily quantifiable, and convergence is likely only local.

\section{Application: Anti-Windup for Autonomous Optimization}\label{sec:aw_appl}
Next, we show how the AWA~\eqref{eq:sys_aw_gen} models physical systems and how anti-windup implementations can be used in the context of autonomous optimization to approximate closed-loop optimization dynamics that are formulated as projected dynamical systems.

First, consider the feedback control loop illustrated in \cref{fig:aw1}. Namely, we study a plant controlled by an integral feedback controller that is subject to input saturation modelled as an Euclidean projection. An anti-windup scheme is in place to avoid integrator windup. More precisely, we consider a dynamical system of the form
\begin{subequations}\label{eq:fb_aw_sys}
	\begin{align*}
		\dot x & \in \tilde{f}(x, P_\calU(u))                                                                  &  & x \in \bbR^m \\
		\dot u & \in k(x, u, P_\calU(u)) -  \tfrac{1}{K} \tilde{G}^{-1}(P_\calU(u)) ( u - P_\calU(u) )  \qquad &  & u \in \bbR^p
	\end{align*}
\end{subequations}
where $\calU \subset \bbR^p$ is prox-regular, $\tilde{f}:\bbR^m \times \calU \rightarrow \bbR^m $ and $k: \bbR^m \times \bbR^p \times \calU \rightarrow \bbR^p$ are continuous vector fields, $\tilde{G}: \calU \rightarrow \bbS_p^+$ is a continuous metric, and $K>0$.

\begin{figure}[bt]
	\centering
	\begin{tikzpicture}
		\matrix[ampersand replacement=\&, row sep=0.3cm, column sep=.55cm] {

			\& \& \node[block] (awgain) {$\frac{1}{K} \tilde{G}(\overline{u})$}; \&
			\node[smallsum] (awsum) {};    \\

			\& \node[smallsum] (aw_inf) {};
			\& \node[block] (int) {$\int$};
			\& \node[branch](br1) {};
			\& \node[saturation block, minimum height=2.7em,minimum width=2.7em](sat){};
			\& \node[branch](br2){}; \\

			\node[block] (ctrl) {$k(\cdot, u, \overline{u})$};
			\& \&  \& \& \& \&
			\node[block] (plant) {
				\begin{tabular}{c}
					$\dot x = \tilde{f}(x, \cdot)$ \\
				\end{tabular}
			};    \\

			\node[none](edge) {};    \\
		};

		\draw[connector] (ctrl.north)|-(aw_inf.west) node[at end, below]{$+$};
		\draw[connector] (awgain.west)-|(aw_inf.north) node[near end, left]{$-$};
		\draw[connector] (aw_inf.east)--(int.west);
		\draw[line] (int.east)--(br1.west) node[at end, below] {$u$};
		\draw[connector] (br1.east)--(sat.west);
		\draw[line] (sat.east)--(br2.west);
		\draw[connector] (br2.east)-|(plant.north) node[near start, above]{$\overline{u} := P_\calU(u)$};
		\draw[line] (plant.south)|-(edge.center);
		\draw[connector] (edge.center)--(ctrl.south);

		\draw[connector] (br2.north)|-(awsum.east) node[at end, above] {$-$};
		\draw[connector] (br1.north)--(awsum.south) node[near end, right] {$+$};
		\draw[connector] (awsum.west)--(awgain.east);
	\end{tikzpicture}
	\caption{Feedback loop with anti-windup (dependence of $k$ and $\tilde{G}$ on $u, \overline{u}$ is not drawn)}\label{fig:aw1}
\end{figure}
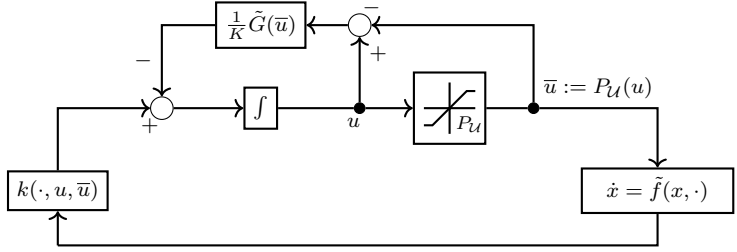

The system~\eqref{eq:fb_aw_sys} can be brought into the form of an AWA~\eqref{eq:sys_aw_gen} with $n = m + p$ by defining $z := \begin{smallbmatrix} x \\ u \end{smallbmatrix}$, $\calZ := \bbR^m \times \calU$, and $G(z) := \begin{smallbmatrix} \bbI & 0 \\ 0 & \tilde{G}(u) \end{smallbmatrix}$.
Thus, we further have
\begin{align*}
	P_\calZ(z) = \begin{bmatrix} x \\ P_\calU(u) \end{bmatrix} \qquad \text{and} \qquad
	f(z, P_\calZ(z)) := \begin{bmatrix} \tilde{f}(x, P_\calU(u)) \\ k(x, u, P_\calU(u))
	\end{bmatrix} \,.
\end{align*}
With these definitions, the \pds \eqref{eq:aw_pds} takes the form
\begin{align*}
	\dot x & = \tilde{f}(x, u)                              &  & x \in \bbR^m     \\
	\dot u & = \Pi_\calU^{\tilde{G}} [k(x, u, u)](u) \qquad &  & u \in \calU \, ,
\end{align*}
where we can ignore the projection onto $\calU$ in the third argument of $k$, because any solution of the PDS~\eqref{eq:aw_pds} is viable (i.e., remains in $\calU$) by definition.

\begin{remark}
	\cref{fig:aw1} shows one limitation of our problem setup: Compared to existing work on anti-windup control~\cite{zaccarianModernAntiwindupSynthesis2011,tarbouriechAntiwindupDesignOverview2009}, we do not model any proportional controller subject to input saturation. This is motivated, on one hand, by theoretical necessity. On the other hand, for our application scenario of autonomous optimization discussed below, stability of the physical plant is usually a prerequisite.
\end{remark}

\subsection{Feedback-based Gradient Schemes for Quadratic Programs}
To illustrate the design opportunities for autonomous optimization, we present three anti-windup schemes that approximate projected gradient flows for a quadratic program (QP). We consider the relatively simple problem of solving a QP as it allows for a concise presentation, easy implementation, and comparability. However, needless to say, our theoretical results in the previous sections cover much more general setups.

Our goal is to design a feedback controller that steers a plant to a steady state that solves the optimization problem
\begin{align}\label{eq:app_opt_prob}
	\begin{split}
		\begin{aligned}
			\text{minimize} \quad   & \Phi(x)  :=\tfrac{1}{2} x^T Q x + c^T x + d       \\
			\text{subject to} \quad & x = h(u)        :=  H u + w \\
			                        & u \in \calU     := \{ v \, | \, A_u v \leq b_u \}
		\end{aligned}
	\end{split}
\end{align}
where $x \in \bbR^m$ and $u \in \bbR^p$ denote the system state and control input, respectively, and $Q \in \bbS_+^m$, $A_u \in \bbR^{r \times p}$ and the remaining parameters are of appropriate size.
The map $h$ denotes the steady-state input-to-state map of the plant subject to the disturbance $w$.\footnote{
In contrast to~\eqref{eq:fb_aw_sys}, we assume for~\eqref{eq:app_opt_prob} that the physical plant is described by an steady-state input-to-state map $x = h(u)$ that satisfies $\tilde{f}(h(u), u) = 0$ for all $u \in \calU$. This approximation can be motivated by singular perturbation ideas~\cite{mentaStabilityDynamicFeedback2018,hauswirthTimescaleSeparationAutonomous2019} which stipulate that the interconnection of \emph{fast decaying} plant dynamics and \emph{slow} optimization dynamics is asymptotically stable. The results in this section can be generalized to a dynamic plant accordingly.
	}
	The set $\calU$ defines constraints which are enforced by physical saturation.

	For solving~\eqref{eq:app_opt_prob} we aim at approximating the projected gradient flow
$\dot u = \Pi_\calU^G [- G^{-1}(u)\nabla \hat{\Phi} (u)](u)$,
	where we have defined $\hat{\Phi}(u) := \Phi (h(u))$ to eliminate the state variable~$x$. In particular, we have $\nabla \hat{\Phi}(u) = H^T \nabla \Phi(h(u))$. In the following, the metric $G$ will be either $G \equiv \bbI$ or $G \equiv Q$ (the latter yielding a projected Newton flow).

	To approximate~$\dot u = \Pi_\calU^G [-  G^{-1}(u) \nabla \hat{\Phi}(u) ](u)$, we consider three systems that fall into the class of anti-windup approximations defined by~\eqref{eq:sys_aw_gen}, two of which can be implemented in a feedback loop as in \cref{fig:aw1}. Their convergence behavior for the same problem instance and varying $K$ is illustrated in \cref{fig:gradient_sim} and discussed below.
	\begin{enumerate}[leftmargin=*, label=\emph{\roman*})]
		\item \emph{Penalty Gradient Flow:} As a reference system we consider the gradient flow of the potential function $\Psi(u) := \hat{\Phi}(u) + \tfrac{1}{2K} d^2_\calU(u))$ which is given by
		      \begin{align}\label{eq:sim_pen_grad}
			      \dot u = - \nabla \Psi(u) = -  H^T \nabla \Phi(h(u)) - \tfrac{1}{K}(u - P_\calU(u) ) \,.
		      \end{align}
		      In this case, we have $G \equiv \bbI$ and $K > 0$ takes the role of a \emph{penalty parameter} for the soft penalty term $d_\calU^2$ that approximately enforces the input constraint $u \in \calU$.\footnote{The penalty $d^2_\calU$ is illustrative in the context of autonomous optimization, however, it is not generally practical for numerical optimization, because evaluating $\nabla d^2_\calU$ requires computing $P_\calU$. Instead, in numerical applications, it is more common to use a penalty $\| \max \{ A_u u - b_u, 0 \}\|^2$.} The system~\eqref{eq:sim_pen_grad} is a special case of the AWA~\eqref{eq:sys_aw_gen} and, as a consequence, \cref{thm:loc,thm:rob_stab} (uniform convergence and robust practical stability) and their corollaries apply as $K \rightarrow 0^+$. However,~\eqref{eq:sim_pen_grad} is not of the special form~\eqref{eq:sys_aw_gen_spec} and convergence of to the optimizer of the problem~\eqref{eq:app_opt_prob} is not guaranteed for positive $K > 0$. Neither does~\eqref{eq:sim_pen_grad} lend itself to a feedback implementation, because $\nabla {\Phi}$ is evaluated at $h(u)$ rather than at $h(P_\calU(u))$ (which is the actual system state for the saturated input).

		\item \emph{Anti-Windup Gradient Scheme:} As a second type of dynamics we consider
		      \begin{align}\label{eq:sim_aw_grad}
			      \underbrace{\dot u = - H^T \nabla {\Phi}( \overline{x}) - \tfrac{1}{K} (u -\overline{u})}_{\text{controller}} \qquad \underbrace{\overline{u} := P_\calU(u) \qquad \overline{x} := h(\overline{u})}_{\text{physical system}}
		      \end{align}
		      which can be implemented in closed loop because the quantities  $\o{u}$ and $\o{x}$ are ``evaluated'' by the physical system at no computational cost (and are assumed to be measurable), which is one of the key features of autonomous optimization.

		      Furthermore, because $\calU$ is convex and $\Phi$ is strongly convex (which implies strong monotonicity), Theorem~\eqref{thm:monot} is applicable and guarantees that $\overline{z} = (\overline{u}, \overline{x})$ converges to the optimizer of~\eqref{eq:app_opt_prob}. This is confirmed in \cref{fig:gradient_sim}.

		\item \emph{Anti-Windup Newton Scheme:} As the final gradient-based anti-windup scheme we consider an anti-windup approximation with $G \equiv Q$ and which is given by
		      \begin{align}\label{eq:sim_aw_newt}
			      \underbrace{\dot u = - Q^{-1} \left(H^T \nabla {\Phi}( \overline{x}) - \tfrac{1}{K} (u -\overline{u} )\right)}_{\text{controller}} \qquad \underbrace{\overline{u} := P_\calU(u) \qquad \overline{x} := h(\overline{u})}_{\text{physical system}} \, .
		      \end{align}
		      The system~\eqref{eq:sim_aw_newt} can be implemented in closed loop with a physical system and approximates a \emph{projected Newton flow}~\cite[Ex.~5.6]{hauswirthProjectedDynamicalSystems2018}. This fact is noteworthy, because, in general, projected Newton flows do not lend themselves to an easy implementation (e.g., as an iterative algorithm).

		      Even though, as seen in \cref{fig:gradient_sim}, $\o{u}$ converges to the optimizer of~\eqref{eq:app_opt_prob}, strictly speaking, Theorem~\eqref{thm:monot} is not directly applicable because $Q \neq \bbI$.
	\end{enumerate}

	\begin{figure}[tb]
		\centering
		\includegraphics[width=.9\columnwidth]{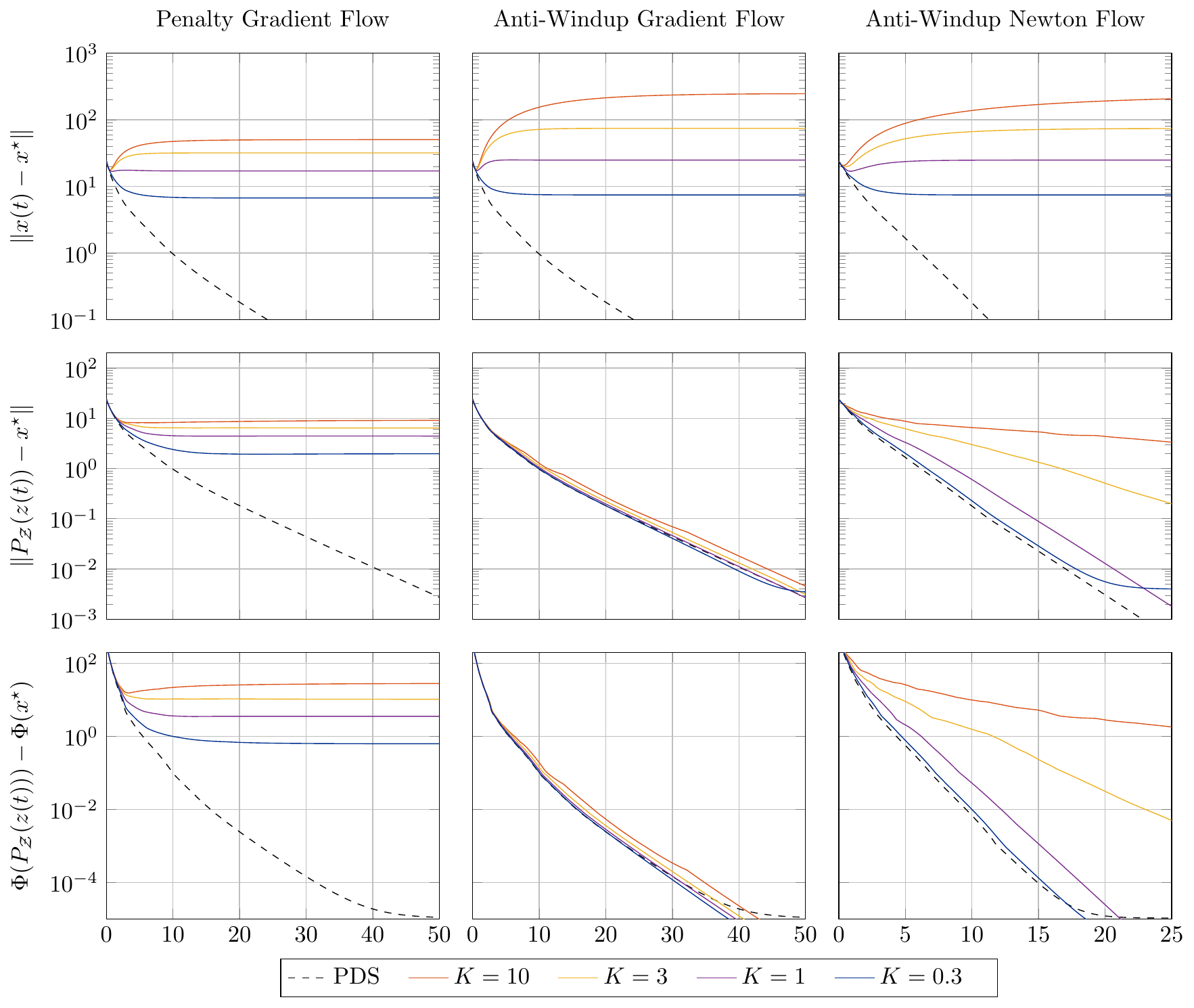}
		\caption[Simulations]{Convergence behavior of~\eqref{eq:sim_pen_grad},~\eqref{eq:sim_aw_grad}, and~\eqref{eq:sim_aw_newt} for a problem instance of~\eqref{eq:app_opt_prob} with $p = 100$ (input dimension) and $r = 300$ (\# of input constraints).\protect\footnotemark}
		\label{fig:gradient_sim}
	\end{figure}

	\footnotetext{The anti-windup dynamics are simulated with MATLAB using a fixed-stepsize forward Euler scheme. The projection on $\calU$ is evaluated using \texttt{quadprog}. The \emph{nominal} \pds is approximated using a projected forward Euler scheme as $u^{k+1} = P_\calU(u^k + \alpha f(u^k))$ which is guaranteed to converge uniformly as $\alpha \rightarrow 0^+$~\cite{nagurneyProjectedDynamicalSystems1996}.
	}

	The anti-windup gradient and Newton schemes defined above illustrate some of the key features of autonomous optimization and anti-windup implementations:
	\begin{enumerate}[leftmargin=*, label=\emph{\roman*})]
		\item Under the conditions of \cref{thm:monot}, the actual system state and saturated control input converge to the optimizer $u^\star$ of~\eqref{eq:app_opt_prob}, even though the internal control variable $u$ does not in general converge to $u^\star$.

		\item In a feedback implementation exploiting input saturation, neither the set $\calU$ nor the steady-state disturbance $w$ needs to be known (or estimated). The only model information required is $H$.
		      Furthemore, recent preliminary theoretical~\cite{colombinorobustnessguaranteesfeedbackbased2019} and experimental results for power systems~\cite{ortmannExperimentalValidationFeedback2019} suggest that these feedback schemes are robust against uncertainties in  $H$.

		\item The simulations in \cref{fig:gradient_sim} suggest that the convergence rate of the ``projected trajectory'' of~\eqref{eq:sim_aw_grad} is not affected by the value of $K$ and is equivalent to the convergence rate of the nominal projected gradient flow. In contrast, the convergence rate of the anti-windup Newton scheme~\eqref{eq:sim_aw_newt} does depend on $K$ and one can recover the rate of projected Newton flow only in the limit $K \rightarrow 0^+$. An analysis of this observation remains, however, outside the scope of this paper.
	\end{enumerate}

	\subsection{Feedback-based Saddle-Flows with Anti-Windup}\label{subsec:saddle}
	In autonomous optimization, constraints on the system state (or output) cannot be enforced directly because they are not directly controllable and often subject to disturbances affecting the physical plant (e.g. an unknown value of $w$). For the purpose of enforcing state or output constraints, projected saddle-point flows have been proven effective~\cite{tangRunningPrimalDualGradient2018,dallaneseOptimalPowerFlow2018,ortmannExperimentalValidationFeedback2019}. In this section, we indicate how anti-windup approximations can be combined with this type of dynamical system, even though this leads us slightly outside the scope of our theoretical results.
	We consider quadratic program
	\begin{align}\label{eq:app_opt_prob_out}
		\begin{split}
			\text{minimize} \quad &\Phi(x) \\
			\text{subject to} \quad & x = h(u), \, u \in \calU  \\
			& x \in \calX := \{ x \, | \, A_x x \leq b_x \} \, ,
		\end{split}
	\end{align}
	where $\Phi, h$, and $\calU$ are defined as in~\eqref{eq:app_opt_prob} and $\calX$ denotes a set of state constraints with $A_x \in \bbR^{s \times m}$ and $b_x \in \bbR^s$.
	To solve~\eqref{eq:app_opt_prob_out}, we consider the projected saddle-point flow
	\begin{align}\label{eq:saddle_flow_proj}
		\dot u = \Pi_\calU \left[- H^T \nabla \Phi(h(u)) - H^T A_x^T \mu \right] \qquad \qquad
		\dot \mu = \Pi_{\bbR^s_{\geq 0}} [ A_x h(u) - b_x] \, ,
	\end{align}
	where $\mu \in \bbR^s$ denotes the dual multipliers associated with the output constraints. The system~\eqref{eq:saddle_flow_proj} (and special cases in which either primal or dual variables are not projected) has been extensively studied and convergence is guaranteed, for instance, under strict convexity of $\Phi$. We refer the reader to~\cite{goebelStabilityRobustnessSaddlepoint2017,cherukuriRoleConvexitySaddlePoint2017} and references therein.

	We approximate~\eqref{eq:saddle_flow_proj} with a (partial) anti-windup implementation as
	\begin{align}\label{eq:saddle_flow_aw}
		\underbrace{
			\begin{aligned}
				\dot u   & = - H^T \nabla {\Phi}( \overline{x})  - H^T A_x^T \mu - \tfrac{1}{K} (u -\overline{u}) \\
				\dot \mu & = \Pi_{\bbR^s_\geq 0} [ A_x \o{x} - b_x]
			\end{aligned}
		}_{\text{controller}}
		\qquad \underbrace{\overline{u} := P_\calU(u) \qquad \overline{x} := h(\overline{u})}_{\text{physical system}} \, .
	\end{align}

	We do not approximate the projected integration of the dual variables with an anti-windup term, since the dual variables are often internal variables of the controller and the projection on the non-negative orthant is easily implementable.

	\begin{figure}
		\centering
		\includegraphics[width=.9\columnwidth]{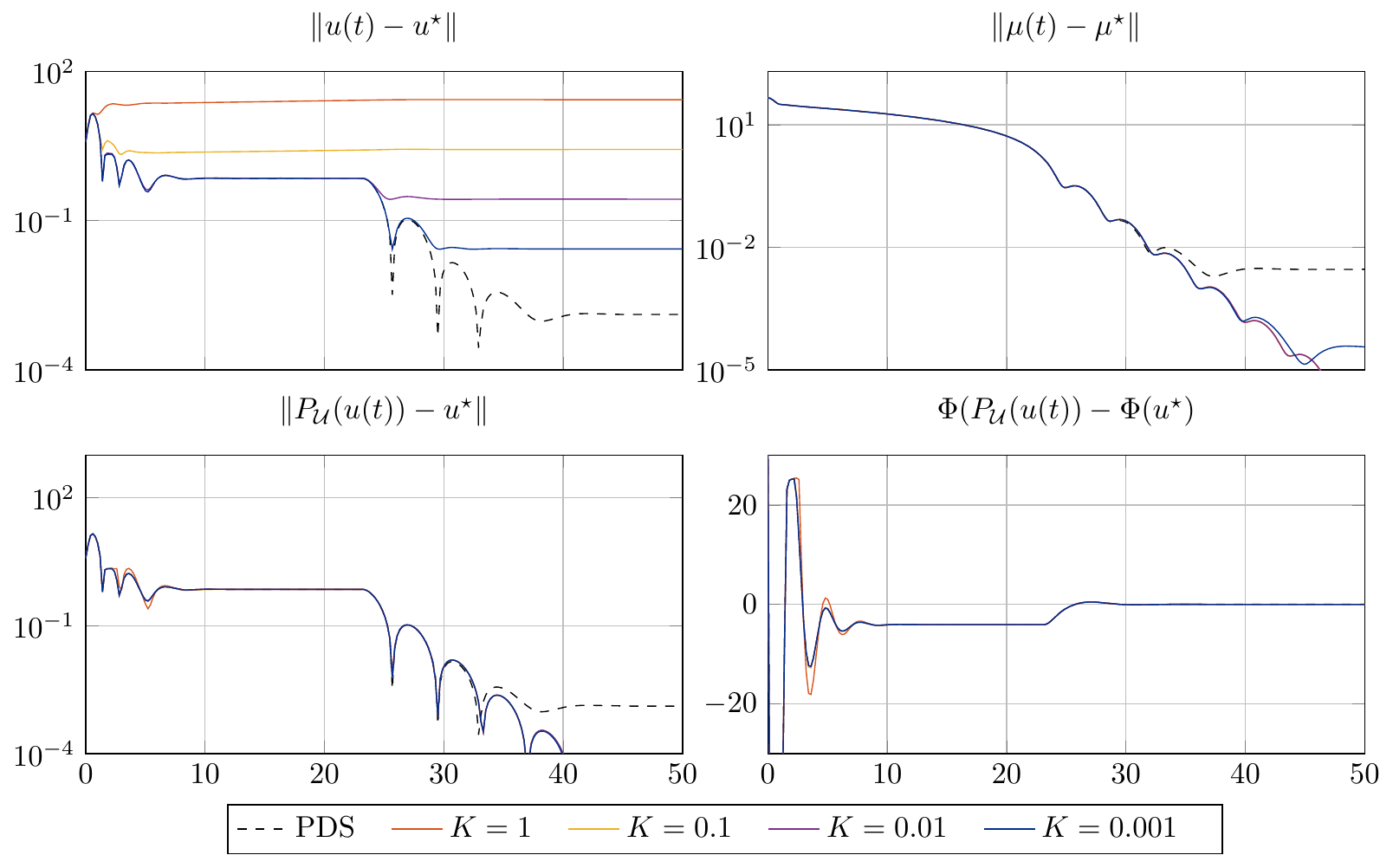}
		\caption[Simulations]{Convergence behavior of~\eqref{eq:saddle_flow_aw} (and the \pds~\eqref{eq:saddle_flow_proj}) for a problem instance of~\eqref{eq:app_opt_prob_out} with $p = 3$ (input dimension), $m = 5$ (state dimension), $r = 10$ (\# of input constraints), and $s = 5$ (\# of state constraints).}
		\label{fig:saddle_sim}
	\end{figure}

	\cref{fig:saddle_sim} illustrates the behavior of~\eqref{eq:saddle_flow_proj} and~\eqref{eq:saddle_flow_aw}. Similarly to the results for the gradient anti-windup approximations, we observe that $u$ does not, in general, converge to its optimal value. However, the saturated control input $P_\calU(u)$ (and thereby the actual system state) and the dual variable $\mu$ converge to the solution of~\eqref{eq:app_opt_prob_out}.

	\cref{thm:monot} (robust convergence) does not apply to~\eqref{eq:saddle_flow_aw}. First, while the projected saddle-flow~\eqref{eq:saddle_flow_proj} is monotone, strong monotonicity is usually not guaranteed~\cite{goebelStabilityRobustnessSaddlepoint2017,cherukuriRoleConvexitySaddlePoint2017}. Second, by applying only a partial anti-windup approximation, the  vector field remains discontinuous because of the projection of $\mu$ on $\bbR^s_{\geq 0}$.

\section{Conclusion}\label{sec:conc}

In this paper we have studied a general class of dynamical systems which are inspired by classical anti-windup control schemes. We have rigourosly established that these systems approximate oblique projected dynamical systems in terms of uniform convergence and semiglobal practical robust stability. Furthermore, we have shown that for a special case, and under an additional monotonicity assumption, these anti-windup approximations exhibit robust convergence to the equilibria of the limiting projected dynamical system.
We have further illustrated several ways in which our results apply in the context of autonomous optimization. In particular, we have shown how physical saturation can be exploited to drive a plant to an optimal steady state without explicit knowledge of the physically-enforced input domain.

Several points remain open: First, it is unclear whether our analysis can be extended to consider control laws that incorporate a proportional control component. Second, the strong monotonicity requirement for robust convergence to equilibria of a projected dynamical systems can presumably be relaxed. Third, our simulations suggest that certain anti-windup gradient schemes retain the same convergence rate as the limiting projected gradient flow, independently of the anti-windup gain. Fully understanding this surprising phenomenon requires further work.

\nocite{*}
\bibliographystyle{siamplain}
\bibliography{bibliography_static}

\begin{thebibliography}{10}

\bibitem{adlyPreservationProxRegularitySets2016}
{\sc S.~Adly, F.~Nacry, and L.~Thibault}, {\em Preservation of
  {{Prox}}-{{Regularity}} of {{Sets}} with {{Applications}} to {{Constrained
  Optimization}}}, SIAM J. Optim., 26 (2016), pp.~448--473.

\bibitem{aubinViabilityTheory1991}
{\sc J.~P. Aubin}, {\em Viability Theory}, Systems \& {{Control}}:
  {{Foundations}} \& {{Applications}}, {Springer}, {Boston}, 1991.

\bibitem{aubinDifferentialInclusionsSetValued1984}
{\sc J.-P. Aubin and A.~Cellina}, {\em Differential {{Inclusions}}:
  {{Set}}-{{Valued Maps}} and {{Viability Theory}}}, Grundlehren Der
  Mathematischen {{Wissenschaften}}, {Springer}, {Berlin Heidelberg}, 1984.

\bibitem{brogliatoEquivalenceComplementaritySystems2006}
{\sc B.~Brogliato, A.~Daniilidis, C.~Lemar{\'e}chal, and V.~Acary}, {\em On the
  equivalence between complementarity systems, projected systems and
  differential inclusions}, Syst Control Lett, 55 (2006), pp.~45--51.

\bibitem{cherukuriRoleConvexitySaddlePoint2017}
{\sc A.~Cherukuri, E.~Mallada, S.~Low, and J.~Cort{\'e}s}, {\em The {{Role}} of
  {{Convexity}} on {{Saddle}}-{{Point Dynamics}}: {{Lyapunov Function}} and
  {{Robustness}}}, IEEE Trans. Autom. Control, 63 (2017), pp.~2449--2464.

\bibitem{colombinoOnlineOptimizationFeedback2019}
{\sc M.~Colombino, E.~Dall'Anese, and A.~Bernstein}, {\em Online
  {{Optimization}} as a {{Feedback Controller}}: {{Stability}} and
  {{Tracking}}}, IEEE Trans. Control Netw. Syst.,  (2019).

\bibitem{colombinorobustnessguaranteesfeedbackbased2019}
{\sc M.~Colombino, J.~W. {Simpson-Porco}, and A.~Bernstein}, {\em Towards
  robustness guarantees for feedback-based optimization}, ArXiv190507363 Math,
  (2019).

\bibitem{cornetExistenceslowsolutions1983}
{\sc B.~Cornet}, {\em Existence of slow solutions for a class of differential
  inclusions}, Journal of Mathematical Analysis and Applications, 96 (1983),
  pp.~130--147.

\bibitem{cortesDiscontinuousdynamicalsystems2008}
{\sc J.~Cort{\'e}s}, {\em Discontinuous dynamical systems}, IEEE Control Syst.
  Mag., 28 (2008), pp.~36--73.

\bibitem{dallaneseOptimalPowerFlow2018}
{\sc E.~Dall'Anese and A.~Simonetto}, {\em Optimal {{Power Flow Pursuit}}},
  IEEE Trans. Smart Grid, 9 (2018), pp.~942--952.

\bibitem{depersisDistributedaveragingintegral2019}
{\sc C.~De~Persis and S.~Grammatico}, {\em Distributed averaging integral
  {{Nash}} equilibrium seeking on networks}, Automatica, 110 (2019), p.~108548.

\bibitem{facchineiFinitedimensionalvariationalinequalities2003}
{\sc F.~Facchinei and J.-S. Pang}, {\em Finite-Dimensional Variational
  Inequalities and Complementarity Problems}, Springer Series in Operations
  Research, {Springer}, {New York}, 2003.

\bibitem{filippovDifferentialEquationsDiscontinuous1988}
{\sc A.~F. Filippov}, {\em Differential {{Equations}} with {{Discontinuous
  Righthand Sides}}: {{Control Systems}}}, Mathematics and Its {{Applications}}
  ({{Soviet Series}}), {Springer Netherlands}, 1988.

\bibitem{goebelStabilityRobustnessSaddlepoint2017}
{\sc R.~Goebel}, {\em Stability and robustness for saddle-point dynamics
  through monotone mappings}, Syst Control Lett, 108 (2017), pp.~16--22.

\bibitem{goebelHybridDynamicalSystems2012}
{\sc R.~Goebel, R.~G. Sanfelice, and A.~R. Teel}, {\em Hybrid {{Dynamical
  Systems}}: {{Modeling}}, {{Stability}}, and {{Robustness}}}, {PUP}, 2012.

\bibitem{hauswirthProjectedDynamicalSystems2018}
{\sc A.~Hauswirth, S.~Bolognani, and F.~D{\"o}rfler}, {\em Projected
  {{Dynamical Systems}} on {{Irregular}}, {{Non}}-{{Euclidean Domains}} for
  {{Nonlinear Optimization}}}, ArXiv180904831 Math,  (2018).

\bibitem{hauswirthProjectedGradientDescent2016}
{\sc A.~Hauswirth, S.~Bolognani, G.~Hug, and F.~D{\"o}rfler}, {\em Projected
  gradient descent on {{Riemannian}} manifolds with applications to online
  power system optimization}, in 54th {{Annual Allerton Conference}} on
  {{Communication}}, {{Control}}, and {{Computing}}, {Monticello, IL}, Sept.
  2016, pp.~225--232.

\bibitem{hauswirthTimescaleSeparationAutonomous2019}
{\sc A.~Hauswirth, S.~Bolognani, G.~Hug, and F.~D{\"o}rfler}, {\em Timescale
  {{Separation}} in {{Autonomous Optimization}}}, ArXiv190506291 Math,  (2019).

\bibitem{hauswirthImplementationProjectedDynamical2020}
{\sc A.~Hauswirth, F.~D{\"o}rfler, and A.~Teel}, {\em On the {{Implementation}}
  of {{Projected Dynamical Systems}} with {{Anti}}-{{Windup Controllers}}}, in
  American {{Control Conference}} ({{ACC}}), 2020, {Denver, CO}, July 2020.
\newblock accepted.

\bibitem{hauswirthOnlineOptimizationClosed2017}
{\sc A.~Hauswirth, A.~Zanardi, S.~Bolognani, F.~D{\"o}rfler, and G.~Hug}, {\em
  Online optimization in closed loop on the power flow manifold}, in 2017
  {{IEEE PowerTech}}, {Manchester, UK}, June 2017.

\bibitem{hiriart-urrutyFundamentalsConvexAnalysis2012}
{\sc J.-B. {Hiriart-Urruty} and C.~Lemar{\'e}chal}, {\em Fundamentals of
  {{Convex Analysis}}}, Grundlehren {{Text Editions}}, {Springer}, {Berlin
  Heidelberg}, 2012.

\bibitem{kellyRatecontrolcommunication1998}
{\sc F.~P. Kelly, A.~K. Maulloo, and D.~K.~H. Tan}, {\em Rate control for
  communication networks: Shadow prices, proportional fairness and stability},
  J Oper Res Soc, 49 (1998), pp.~237--252.

\bibitem{lahkarProjectionDynamicGeometry2008}
{\sc R.~Lahkar and W.~H. Sandholm}, {\em The projection dynamic and the
  geometry of population games}, Games and Economic Behavior, 64 (2008),
  pp.~565--590.

\bibitem{lawrenceOptimalSteadyStateControl2018}
{\sc L.~S.~P. Lawrence, Z.~E. Nelson, E.~Mallada, and J.~W. {Simpson-Porco}},
  {\em Optimal {{Steady}}-{{State Control}} for {{Linear Time}}-{{Invariant
  Systems}}}, in 2018 {{IEEE Conference}} on {{Decision}} and {{Control}}
  ({{CDC}}), {Miami Beach, FL}, Dec. 2018, pp.~3251--3257.

\bibitem{lowInternetCongestionControl2002}
{\sc S.~H. Low, F.~Paganini, and J.~C. Doyle}, {\em Internet congestion
  control}, IEEE Control Syst. Mag., 22 (2002), pp.~28--43.

\bibitem{mentaStabilityDynamicFeedback2018}
{\sc S.~Menta, A.~Hauswirth, S.~Bolognani, G.~Hug, and F.~D{\"o}rfler}, {\em
  Stability of dynamic feedback optimization with applications to power
  systems}, in 56th {{Annual Allerton Conference}} on {{Communication}},
  {{Control}}, and {{Computing}}, {Monticello, IL}, Oct. 2018, pp.~136--143.

\bibitem{molzahnSurveyDistributedOptimization2017}
{\sc D.~K. Molzahn, F.~D{\"o}rfler, H.~Sandberg, S.~H. Low, S.~Chakrabarti,
  R.~Baldick, and J.~Lavaei}, {\em A {{Survey}} of {{Distributed Optimization}}
  and {{Control Algorithms}} for {{Electric Power Systems}}}, IEEE Trans. Smart
  Grid, 8 (2017), pp.~2941--2962.

\bibitem{nagurneyProjectedDynamicalSystems1996}
{\sc A.~Nagurney and D.~Zhang}, {\em Projected {{Dynamical Systems}} and
  {{Variational Inequalities}} with {{Applications}}}, {Springer}, 1~ed., 1996.

\bibitem{ortmannExperimentalValidationFeedback2019}
{\sc L.~Ortmann, A.~Hauswirth, I.~Caduff, F.~D{\"o}rfler, and S.~Bolognani},
  {\em Experimental validation of feedback optimization in power distribution
  grids}, ArXiv191003384 Eess,  (2019).

\bibitem{rockafellarVariationalAnalysis2009}
{\sc R.~T. Rockafellar and R.~J.-B. Wets}, {\em Variational Analysis}, no.~317
  in Grundlehren Der Mathematischen {{Wissenschaften}}, {Springer},
  {Heidelberg}, 3~ed., 2009.

\bibitem{sontagCommentsIntegralVariants1998}
{\sc E.~D. Sontag}, {\em Comments on integral variants of {{ISS}}}, Syst
  Control Lett, 34 (1998), pp.~93--100.

\bibitem{tangRunningPrimalDualGradient2018}
{\sc Y.~Tang, E.~Dall'Anese, A.~Bernstein, and S.~Low}, {\em Running
  {{Primal}}-{{Dual Gradient Method}} for {{Time}}-{{Varying Nonconvex
  Problems}}}, ArXiv181200613 Math,  (2018).

\bibitem{tangRealTimeOptimalPower2017}
{\sc Y.~Tang, K.~Dvijotham, and S.~Low}, {\em Real-{{Time Optimal Power
  Flow}}}, IEEE Trans. Smart Grid, 8 (2017), pp.~2963--2973.

\bibitem{tarbouriechAntiwindupDesignOverview2009}
{\sc S.~Tarbouriech and M.~Turner}, {\em Anti-windup design: An overview of
  some recent advances and open problems}, IET Control Theory Appl., 3 (2009),
  pp.~1--19.

\bibitem{zaccarianModernAntiwindupSynthesis2011}
{\sc L.~Zaccarian and A.~R. Teel}, {\em Modern Anti-Windup Synthesis: Control
  Augmentation for Actuator Saturation}, {Princeton University Press}, 2011.

\end{thebibliography}
\end{document}